\numberwithin{equation}{section}
\numberwithin{figure}{section}
\theoremstyle{plain}
\newtheorem{thm}{\protect\theoremname}[section]
\theoremstyle{definition}
\newtheorem{defn}[thm]{\protect\definitionname}
\theoremstyle{plain}
\newtheorem{lem}[thm]{\protect\lemmaname}
\theoremstyle{plain}
\newtheorem{cor}[thm]{\protect\corollaryname}
\theoremstyle{remark}
\newtheorem{rem}[thm]{\protect\remarkname}
\theoremstyle{plain}
\newtheorem{prop}[thm]{\protect\propositionname}
\subjclass[2020]{11S15, 12F10, 14D22, 14E18}
\providecommand{\corollaryname}{Corollary}
\providecommand{\definitionname}{Definition}
\providecommand{\lemmaname}{Lemma}
\providecommand{\propositionname}{Proposition}
\providecommand{\remarkname}{Remark}
\providecommand{\theoremname}{Theorem}
\begin{document}
\title[Motivic versions of mass formulas]{Motivic versions of mass formulas by Krasner, Serre and Bhargava}
\author{Takehiko Yasuda}
\address{Department of Mathematics, Graduate School of Science, Osaka University
Toyonaka, Osaka 560-0043, JAPAN}
\email{yasuda.takehiko.sci@osaka-u.ac.jp}
\begin{abstract}
We prove motivic versions of mass formulas by Krasner, Serre and Bhargava
concerning (weighted) counts of extensions of local fields. 
\end{abstract}

\maketitle
\global\long\def\AA{\mathbb{A}}%
\global\long\def\PP{\mathbb{P}}%
\global\long\def\NN{\mathbb{N}}%
\global\long\def\GG{\mathbb{G}}%
\global\long\def\ZZ{\mathbb{Z}}%
\global\long\def\QQ{\mathbb{Q}}%
\global\long\def\CC{\mathbb{C}}%
\global\long\def\FF{\mathbb{F}}%
\global\long\def\LL{\mathbb{L}}%
\global\long\def\RR{\mathbb{R}}%
\global\long\def\MM{\mathbb{M}}%
\global\long\def\SS{\mathbb{S}}%

\global\long\def\bx{\boldsymbol{x}}%
\global\long\def\by{\boldsymbol{y}}%
\global\long\def\bf{\mathbf{f}}%
\global\long\def\ba{\mathbf{a}}%
\global\long\def\bs{\mathbf{s}}%
\global\long\def\bt{\mathbf{t}}%
\global\long\def\bw{\mathbf{w}}%
\global\long\def\bb{\mathbf{b}}%
\global\long\def\bv{\mathbf{v}}%
\global\long\def\bp{\mathbf{p}}%
\global\long\def\bq{\mathbf{q}}%
\global\long\def\bj{\mathbf{j}}%
\global\long\def\bM{\mathbf{M}}%
\global\long\def\bd{\mathbf{d}}%
\global\long\def\bA{\mathbf{A}}%
\global\long\def\bB{\mathbf{B}}%
\global\long\def\bC{\mathbf{C}}%
\global\long\def\bP{\mathbf{P}}%
\global\long\def\bX{\mathbf{X}}%
\global\long\def\bY{\mathbf{Y}}%
\global\long\def\bZ{\mathbf{Z}}%
\global\long\def\bW{\mathbf{W}}%
\global\long\def\bV{\mathbf{V}}%
\global\long\def\bU{\mathbf{U}}%
\global\long\def\bN{\mathbf{N}}%
\global\long\def\bQ{\mathbf{Q}}%

\global\long\def\cN{\mathcal{N}}%
\global\long\def\cW{\mathcal{W}}%
\global\long\def\cY{\mathcal{Y}}%
\global\long\def\cM{\mathcal{M}}%
\global\long\def\cF{\mathcal{F}}%
\global\long\def\cX{\mathcal{X}}%
\global\long\def\cE{\mathcal{E}}%
\global\long\def\cJ{\mathcal{J}}%
\global\long\def\cO{\mathcal{O}}%
\global\long\def\cD{\mathcal{D}}%
\global\long\def\cZ{\mathcal{Z}}%
\global\long\def\cR{\mathcal{R}}%
\global\long\def\cC{\mathcal{C}}%
\global\long\def\cL{\mathcal{L}}%
\global\long\def\cV{\mathcal{V}}%
\global\long\def\cU{\mathcal{U}}%
\global\long\def\cS{\mathcal{S}}%
\global\long\def\cT{\mathcal{T}}%
\global\long\def\cA{\mathcal{A}}%
\global\long\def\cB{\mathcal{B}}%
\global\long\def\cG{\mathcal{G}}%
\global\long\def\cP{\mathcal{P}}%
\global\long\def\cQ{\mathcal{Q}}%

\global\long\def\fs{\mathfrak{s}}%
\global\long\def\fp{\mathfrak{p}}%
\global\long\def\fm{\mathfrak{m}}%
\global\long\def\fX{\mathfrak{X}}%
\global\long\def\fV{\mathfrak{V}}%
\global\long\def\fx{\mathfrak{x}}%
\global\long\def\fv{\mathfrak{v}}%
\global\long\def\fY{\mathfrak{Y}}%
\global\long\def\fa{\mathfrak{a}}%
\global\long\def\fb{\mathfrak{b}}%
\global\long\def\fc{\mathfrak{c}}%
\global\long\def\fO{\mathfrak{O}}%
\global\long\def\fd{\mathfrak{d}}%
\global\long\def\fP{\mathfrak{P}}%
\global\long\def\fD{\mathfrak{D}}%
\global\long\def\fM{\mathfrak{M}}%
\global\long\def\fC{\mathfrak{C}}%

\global\long\def\rv{\mathbf{\mathrm{v}}}%
\global\long\def\rx{\mathrm{x}}%
\global\long\def\rw{\mathrm{w}}%
\global\long\def\ry{\mathrm{y}}%
\global\long\def\rz{\mathrm{z}}%
\global\long\def\bv{\mathbf{v}}%
\global\long\def\bw{\mathbf{w}}%
\global\long\def\sv{\mathsf{v}}%
\global\long\def\sx{\mathsf{x}}%
\global\long\def\sw{\mathsf{w}}%

\global\long\def\Spec{\mathrm{Spec}\,}%
\global\long\def\Hom{\mathrm{Hom}}%
\global\long\def\Aff{\mathbf{Aff}}%
\global\long\def\ACF{\mathbf{ACF}}%

\global\long\def\Var{\mathbf{Var}}%
\global\long\def\Gal{\mathrm{Gal}}%
\global\long\def\Jac{\mathrm{Jac}}%
\global\long\def\Ker{\mathrm{Ker}}%
\global\long\def\Image{\mathrm{Im}}%
\global\long\def\Aut{\mathrm{Aut}}%
\global\long\def\st{\mathrm{st}}%
\global\long\def\diag{\mathrm{diag}}%
\global\long\def\characteristic{\mathrm{char}}%
\global\long\def\tors{\mathrm{tors}}%
\global\long\def\sing{\mathrm{sing}}%
\global\long\def\red{\mathrm{red}}%
\global\long\def\ord{\operatorname{ord}}%
\global\long\def\pt{\mathrm{pt}}%
\global\long\def\op{\mathrm{op}}%
\global\long\def\Val{\mathrm{Val}}%
\global\long\def\Res{\mathrm{Res}}%
\global\long\def\disc{\mathrm{disc}}%
\global\long\def\Coker{\mathrm{Coker}}%
 
\global\long\def\length{\mathrm{length}}%
\global\long\def\sm{\mathrm{sm}}%
\global\long\def\rank{\mathrm{rank}}%
\global\long\def\et{\mathrm{et}}%
\global\long\def\hom{\mathrm{hom}}%
\global\long\def\tor{\mathrm{tor}}%
\global\long\def\reg{\mathrm{reg}}%
\global\long\def\cont{\mathrm{cont}}%
\global\long\def\Stab{\mathrm{Stab}}%
\global\long\def\GCov{G\textrm{-}\mathrm{Cov}}%
\global\long\def\P{\mathrm{P}}%

\global\long\def\GL{\mathrm{GL}}%
\global\long\def\codim{\mathrm{codim}}%
\global\long\def\prim{\mathrm{prim}}%
\global\long\def\cHom{\mathcal{H}om}%
\global\long\def\cSpec{\mathcal{S}pec}%
\global\long\def\Proj{\mathrm{Proj}\,}%
\global\long\def\modified{\mathrm{mod}}%
\global\long\def\ind{\mathrm{ind}}%
\global\long\def\rad{\mathrm{rad}}%
\global\long\def\Conj{\mathrm{Conj}}%
\global\long\def\Disc{\mathrm{Disc}}%
\global\long\def\hotimes{\hat{\otimes}}%
\global\long\def\Fil{\mathrm{Fil}}%
\global\long\def\Inn{\mathrm{Inn}}%
\global\long\def\rfil{\mathrm{rfil}}%
\global\long\def\per{\mathrm{per}}%
\global\long\def\id{\mathrm{id}}%
\global\long\def\AffVar{\mathbf{AffVar}}%
\global\long\def\Alg{\mathbf{Alg}}%
\global\long\def\PSch{\textit{P-}\mathbf{Sch}}%
\global\long\def\PQVar{\textit{P-}\mathbf{QVar}}%
\global\long\def\Set{\mathbf{Set}}%
\global\long\def\Eis{\mathfrak{Eis}}%
\global\long\def\Sch{\mathbf{Sch}}%
\global\long\def\Shubi{\mathrm{Sh_{ubi}}}%
\global\long\def\unif{\mathrm{unif}}%
\global\long\def\sets{\mathbf{Set}}%
\global\long\def\uni{\mathrm{uni}}%
\global\long\def\Uni{\mathrm{Uni}}%
\global\long\def\sep{\mathrm{sep}}%
\global\long\def\Tr{\operatorname{Tr}}%
\global\long\def\J{\operatorname{J}}%
\global\long\def\tpars{\llparenthesis t\rrparenthesis}%
\global\long\def\tbrats{\llbracket t\rrbracket}%
\global\long\def\Uf{\mathfrak{Uf}}%
\global\long\def\Ut{\mathfrak{Ut}}%
\global\long\def\pr{\mathrm{pr}}%

\section{Introduction}

The aim of this article is to prove motivic versions of mass formulas
by Krasner \cite{krasner1962nombredes,krasner1966nombredes}, Serre
\cite{serre1978unetextquotedblleftformule} and Bhargava \cite{bhargava2007massformulae}.
For a non-archimedean local field $K$ with residue field $\FF_{q}$
and for a positive integer $n$, Serre proved the formula
\begin{equation}
\sum_{L}\frac{q^{-\bd_{L/K}}}{|\Aut(L)|}=q^{1-n},\label{eq:Serre-orig}
\end{equation}
where $L$ runs over isomorphism classes of totally ramified extensions
of $K$ with $[L:K]=n$, $\Aut(L)$ is the group of $K$-automorphisms
and $\bd_{L/K}$ is the discriminant exponent of $L/K$. Using Serre's
formula, Bhargava proved a similar formula
\begin{equation}
\sum_{L}\frac{q^{-\bd_{L/K}}}{|\Aut(L/K)|}=\sum_{j=0}^{n-1}P(n,n-j)q^{-j},\label{eq:Bha-orig}
\end{equation}
this time $L$ running over isomorphism classes of étale $K$-algebras
of degree $n$. Here $P(n,i)$ denotes the number of partitions of
$n$ into exactly $i$ positive integers. 

In \cite{yasuda2017thewild}, the author proved these formulas by
a different approach which was based upon observation from \cite{wood2015massformulas}
which relates Bhargava's formula with the Hilbert scheme of points.
In this approach, we can prove Bhargava's formula first and deduce
Serre's formula from it, using relation between the two formulas via
the exponential formula which was observed by Kedlaya \cite{kedlaya2007massformulas}. 

Before Serre obtained his formula, Krasner had obtained a formula
for the number of degree-$n$ extensions of $K$ with a prescribed
discriminant exponent as well as one restricted to totally ramified
extensions. Thus, Krasner's result is a refinement of Serre's one
and we can derive Serre's from Krasner's. Under the condition that
$K$ has characteristic $p>0$, the situation that we focus on in
the present paper, the most interesting case is when $p\mid n$, $m-n+1>0$
and $p\nmid(m-n+1)$. Under these conditions, the number of degree-$n$
totally ramified extensions of $K$ in a fixed algebraic closure of
$K$ with discriminant exponent $m$ is 
\[
n(q-1)q^{\lfloor(m-n+1)/p\rfloor}.
\]
We may rewrite this formula as
\begin{equation}
\sum_{L}\frac{1}{|\Aut(L)|}=(q-1)q^{\lfloor(m-n+1)/p\rfloor},\label{eq:Kra}
\end{equation}
where $L$ runs over isomorphism classes of such extensions of $K$(instead
of counting subfields of $\overline{K}$). 

To formulate motivic versions of these formulas, we consider the P-moduli
space $\Delta_{n}$ (resp. $\Delta_{n}^{\circ}$) of degree-$n$ étale
covers (resp. connected covers) of the punctured formal disk $\Spec k\llparenthesis t\rrparenthesis$
with $k$ denoting a field, constructed in \cite{tonini2023moduliof}.
The notion of P-moduli space is even coarser than the one of coarse
moduli space, but enough to define motivic integrals that we consider
below. For details, see Section (\ref{sec:Strong-P-moduli-spaces}).
The discriminant exponent defines a constructible function $\bd\colon\Delta_{n}\longrightarrow\ZZ$
as well as its restriction to $\Delta_{n}^{\circ}$. We can define
the integral 
\[
\int_{\Delta_{n}}\LL^{-\bd}:=\sum_{m=0}^{\infty}[\bd^{-1}(m)]\LL^{-m}
\]
and similarly the integral $\int_{\Delta_{n}^{\circ}}\LL^{-\bd}$
in a version of the complete Grothendieck ring of varieties, denoted
by $\widehat{\cM}_{k}^{\heartsuit}$ (for the definition of this ring,
see Definition \ref{def:Grothendieck ring}). Motivic versions of
formulas (\ref{eq:Serre-orig}) and (\ref{eq:Bha-orig}) by Serre
and Bhargava are formulated as follows:
\begin{thm}[Theorem \ref{thm:motivic-Serre} and Corollary \ref{cor:motivic-Bhargava}]
\label{thm:main2}We have the following equalities in $\widehat{\cM}_{k}^{\heartsuit}$:
\begin{gather*}
\int_{\Delta_{n}^{\circ}}\LL^{-\bd}=\LL^{1-n},\\
\int_{\Delta_{n}}\LL^{-\bd}=\sum_{j=0}^{n-1}P(n,n-j)\LL^{-j}.
\end{gather*}
\end{thm}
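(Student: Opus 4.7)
The plan is to prove the two equalities separately, mirroring the theorem/corollary structure: first establish the Serre-type formula $\int_{\Delta_n^\circ}\LL^{-\bd} = \LL^{1-n}$ directly, then deduce the Bhargava-type formula as a corollary via a motivic version of Kedlaya's exponential formula.

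For the Serre part, I would parameterize connected degree-$n$ étale covers of $\Spec k\tpars$ by Eisenstein polynomials $f(x) = x^n + a_{n-1}x^{n-1} + \cdots + a_0 \in k\tbrats[x]$ with $a_i \in (t)$ and $v(a_0) = 1$. Two such polynomials define isomorphic extensions precisely when their roots are related by a change of uniformizer, so the natural map from the motivic space $\Eis_n$ of Eisenstein polynomials to $\Delta_n^\circ$ has fibres modeled on a group scheme $\cU_n$ of uniformizer changes (modulo automorphisms). The steps are: (i) construct $\Eis_n$ as an inverse limit of finite-dimensional truncations, with measure in $\widehat{\cM}_k^\heartsuit$; (ii) establish the fibre description via $\cU_n$; (iii) compute $\int_{\Eis_n}\LL^{-\bd(f)}$ by stratifying on the valuation vector of the coefficients and using a Newton-polygon formula for $v(\disc f)$; (iv) divide by $[\cU_n]$ to descend to $\int_{\Delta_n^\circ}\LL^{-\bd}$.

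The main obstacle is step (ii): the group of uniformizer changes is infinite-dimensional, and reconciling its motivic class with the P-moduli framework of \cite{tonini2023moduliof} requires careful truncation arguments and compatibility checks for the measure in $\widehat{\cM}_k^\heartsuit$. Once the torsor-like structure is in place, the combination $[\Eis_n]/[\cU_n]$ weighted by $\LL^{-\bd(f)}$ should collapse to $\LL^{1-n}$, mirroring Serre's classical mass computation.

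For Bhargava, I would use the unique decomposition of an étale cover into connected components, which gives the stratification
\begin{equation*}
\Delta_n \;=\; \bigsqcup_{\lambda \vdash n}\prod_{i \ge 1}\mathrm{Sym}^{m_i(\lambda)}\Delta_i^\circ
\end{equation*}
indexed by partitions $\lambda = (1^{m_1}2^{m_2}\cdots)$ with $\ell(\lambda) = \sum_i m_i$ parts. Since $\bd$ is additive on disjoint unions, a motivic symmetric-product identity together with the Serre formula gives $\int_{\mathrm{Sym}^{m_i}\Delta_i^\circ}\LL^{-\bd} = \LL^{m_i(1-i)}$, so
\begin{equation*}
\int_{\Delta_n}\LL^{-\bd} \;=\; \sum_{\lambda \vdash n}\LL^{\ell(\lambda) - n} \;=\; \sum_{j=0}^{n-1} P(n, n-j)\LL^{-j},
\end{equation*}
recovering Bhargava's formula. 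The remaining subtlety is justifying the symmetric-product identity inside $\widehat{\cM}_k^\heartsuit$, which relies on the power structure on Grothendieck rings of varieties together with the specific scalar form $\LL^{1-i}$ of the Serre answer.
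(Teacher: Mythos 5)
Your overall setup for the Serre half (Eisenstein polynomials, with fibres over $\Delta_n^{\circ}$ described by uniformizers modulo automorphisms) is the same as the paper's, but steps (iii)--(iv) contain a genuine error rather than just a gap: you propose to integrate $\LL^{-\bd(f)}$ over $\Eis_n$ \emph{and} to divide by a fibre class $[\cU_n]$ that is independent of the point of $\Delta_n^{\circ}$. The second assumption is false. The space of uniformizers of a fixed $A$ does have class independent of $A$, but the map sending a uniformizer to the coefficient vector of its minimal polynomial has Jacobian of order exactly $\bd_A$ (this is the computation $\ord_{\cJ}|_{\Uf_A}\equiv m$ in the paper, resting on Serre's identity $\det(\partial s_i/\partial x_j)=\prod_{i<j}(x_i-x_j)$). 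Consequently the fibre of $\Eis^{(m)}\to\Delta_n^{(m)}$ over $A$ has motivic measure $c\,\LL^{-\bd_A}$ for a constant $c$, not constant measure $[\cU_n]$. With the correct fibre measure, the weighted integral you compute in (iii) pushes down to $c\int_{\Delta_n^{\circ}}\LL^{-2\bd}$, not to $c\int_{\Delta_n^{\circ}}\LL^{-\bd}$: the discriminant gets counted twice. The correct argument (Serre's, and the paper's) computes the \emph{unweighted} motivic volume of $\Eis$ --- which is trivial, since $\Eis$ is cut out by $y_{i,0}=0$ and $y_{n,1}\ne 0$ --- and lets the factor $\LL^{-\bd}$ emerge from the fibres: one proves $[\Eis_l^{(m)}/\GG_m]=[\Delta_n^{(m)}]\,\LL^{n(l+1)-2-m}$, so that summing over $m$ converts $\mu(\Eis/\GG_m)=\LL^{-1}$ into $\LL^{n-2}\int_{\Delta_n^{\circ}}\LL^{-\bd}$. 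No Newton-polygon stratification is needed for the Serre formula (that stratification is what yields the Krasner-type refinement). Also be aware that the part you compress into ``once the torsor-like structure is in place'' is where most of the paper's work lies: showing $\psi_l^{-1}(A)$ is closed, that $\Uf_{A,l}/\Aut(A)\to\psi_l^{-1}(A)$ is an $\AA^{m}$-bundle for $l\gg 0$ (via a Fontaine-type lemma and freeness of the $\Aut(A)$-action on truncated uniformizers), and that the resulting fibrewise statement yields an identity in $K_0^{\heartsuit}(\Var/k)$, which is precisely why that ring is defined by very-weak-bundle relations.

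For the Bhargava half, your decomposition of $\Delta_n$ by symmetric products of the $\Delta_i^{\circ}$ is set-theoretically the natural one, whereas the paper works with ordered products $\prod_i\Delta_{q_i}^{\circ}$ over partitions and only needs multiplicativity of $\int\LL^{-\bd}$ over products. The identity $\int_{\mathrm{Sym}^{m}\Delta_i^{\circ}}\LL^{-\bd}=\LL^{m(1-i)}$ that your route requires is a genuine unproven step: it needs a weighted power-structure (Kapranov zeta) argument, and it is not clear that the power structure descends to $\widehat{\cM}_k^{\heartsuit}$, which is a quotient of the usual Grothendieck ring by the extra very-weak-bundle relations. You would need either to prove that descent or to reorganize the decomposition so that only honest products of the $\Delta_{q_i}^{\circ}$ (and no symmetric powers) intervene.
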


The second equality of the theorem is equivalent to \cite[Corollary 1.5]{yasuda2024motivic}
via the correspondence between the discriminant exponent and the Artin
conductor \cite{wood2015massformulas}, except a slight difference
of the used Grothendieck rings. The proof in \cite{yasuda2024motivic}
is by translating the proof in \cite{yasuda2017thewild} of Bhargava's
formula into the motivic setting. 

We also prove a motivic version of Krasner's formula (\ref{eq:Kra}).
Let $\Delta_{n}^{(m)}:=\bd^{-1}(m)\subset\Delta_{n}^{\circ}$, the
locus of étale covers with discriminant exponent $m$. This is a constructible
subset. 
\begin{thm}[see Theorem \ref{thm:motivic-Krasner} for the full statement of the
result.]
If $k$ has characteristic $p>0$ and if $p\mid n$, $m-n+1>0$ and
$p\nmid(m-n+1)$, then we have
\[
[\Delta_{n}^{(m)}]=(\LL-1)\LL^{\lfloor(m-n+1)/p\rfloor}
\]
in $\widehat{\cM}_{k}^{\heartsuit}$. 
\end{thm}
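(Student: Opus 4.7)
The plan is to imitate the classical proof of Krasner's formula in the motivic setting, by parametrizing totally ramified degree-$n$ extensions of $k\llparenthesis t \rrparenthesis$ via Eisenstein polynomials, stratifying these by the discriminant valuation, and computing the resulting class directly. First, for a sufficiently large truncation level $N$ (large enough that the Krasner radius of every extension with $\bd_{L/K} = m$ is captured), I would introduce an affine scheme $\Eis_n^{\leq N}$ parametrizing truncated Eisenstein polynomials
$$f(x) = x^n + a_{n-1}(t) x^{n-1} + \cdots + a_1(t) x + a_0(t),$$
with $a_i(t) \in t \cdot k[t]/(t^{N+1})$ and $a_0 \equiv c_1 t \pmod{t^2}$ with $c_1 \in \GG_m$, and show that $f \mapsto \Spec k\llparenthesis t\rrparenthesis[x]/(f)$ descends to a morphism onto the relevant part of $\Delta_n^\circ$, compatibly with the P-moduli structure of \cite{tonini2023moduliof}.

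Next, using the identity $\bd_{L/K} = \ord_L(f'(\alpha))$ for $\alpha$ a root of $f$, together with $\ord_L(\alpha) = 1$ and $\ord_L(t) = n$, I would expand $f'(\alpha) = \sum_{i=1}^n i \cdot a_i \alpha^{i-1}$ to deduce that
$$\bd_{L/K} = \min_{\substack{1 \leq i \leq n-1 \\ p\,\nmid\, i}} \bigl( n\, v_t(a_i) + i - 1 \bigr)$$
whenever the minimum is attained at a unique index. The hypothesis $p \mid n$ kills the top term $n \alpha^{n-1}$, and the hypothesis $p \nmid (m-n+1)$ implies that writing $m = nv + (i-1)$ with $1 \leq i \leq n-1$ forces $p \nmid i$. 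Hence the locus $\bd_{L/K} = m$ is cut out inside $\Eis_n^{\leq N}$ by the valuation condition $v_t(a_i) = v = (m-i+1)/n$ together with strict inequalities $n\, v_t(a_j) + j - 1 > m$ for the other $j$ with $p \nmid j$, while the coefficients $a_j$ with $p \mid j$ are free subject only to the Eisenstein condition.

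Then I would compute the class of the locally closed stratum $\Eis_n^{(m)} \subset \Eis_n^{\leq N}$: coefficient by coefficient, its class is a monomial in $\LL$ times $(\LL-1)^2$ (one factor from the exact-valuation condition on $a_0$, one from the exact-valuation condition on $a_i$). To pass from $[\Eis_n^{(m)}]$ to $[\Delta_n^{(m)}]$, I would compute the fibre class of the forgetful morphism: over a given extension $L$ the Eisenstein polynomials having a root in $L$ are classified by the orbits of $\Aut(L/K)$ on the set of truncated uniformizers of $L$, and this quotient class is itself a monomial in $\LL$ times $(\LL-1)$. Dividing and simplifying using the arithmetic identity
$$\left\lfloor (m-n+1)/p\right\rfloor = \frac{n(v-1)}{p} + \left\lfloor i/p\right\rfloor$$
(valid because $p \mid n$) should collapse the ratio to $(\LL-1)\LL^{\lfloor(m-n+1)/p\rfloor}$.

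The main obstacle will be rigorously setting up the quotient by $\Aut(L/K)$ in the motivic P-moduli formalism; this requires the refined Grothendieck ring $\widehat{\cM}_k^\heartsuit$ and a careful analysis of the automorphism stratification, since $|\Aut(L/K)|$ varies in families of wildly ramified extensions. A secondary difficulty is the locus where the minimum of the valuations is not attained uniquely, which a priori forms a thin contribution and must be shown either not to meet $\Delta_n^{(m)}$ under our hypotheses, or to be negligible in the completion defining $\widehat{\cM}_k^\heartsuit$.
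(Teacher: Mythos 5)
Your plan follows the same route as the paper: parametrize by Eisenstein polynomials, describe the stratum of discriminant exponent $m$ explicitly (your valuation formula for $\bd$ is the paper's Proposition \ref{prop:Eis-m-explicit}, and the class $(\LL-1)^2\LL^{\bullet}$ of the truncated stratum is its Corollary \ref{cor:pi_l(Eis^m)}), and then divide by the class of the fiber of the forgetful map to $\Delta_n^{(m)}$. The endgame arithmetic is also essentially right, although the exponent $\lfloor(m-n+1)/p\rfloor$ actually arises from summing $\lfloor(c+i)/n\rfloor$ over \emph{all} indices $i$ with $p\nmid(n-i)$ via Hermite's identity, not from a single identity at the minimizing index; and your ``secondary difficulty'' about non-unique minima is vacuous, since the quantities $n\,v_t(a_i)+i-1$ are pairwise distinct modulo $n$, so the minimum is always attained uniquely.

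The genuine gap is the step you flag as ``the main obstacle'' and then do not carry out: proving that over every geometric point $A$ of $\Delta_n^{(m)}$ the fiber of the truncated Eisenstein space — i.e.\ the quotient of the space of truncated uniformizers of $A$ by $H=\Aut(A/K\tpars)$ — has class exactly $(\LL-1)\LL^{n(l+1)-2-m}$, \emph{and} that this pointwise statement upgrades to the product relation $[\Eis_l^{(m)}]=[\Delta_n^{(m)}]\cdot(\text{fiber class})$ in the ring. This is the bulk of the paper. It requires: (i) a Fontaine/Krasner-type bound showing the $H$-action on level-$l$ truncated uniformizers is free for $l\ge m$ (via the inequality $\ord(\sigma(\varpi')-\varpi')<m+1$ coming from ramification groups); (ii) the change-of-variables computation $\ord_{\cJ}=m$ for the Jacobian of the map from uniformizers to Eisenstein coefficients, which makes $\Uf_{A,l}/H\to\psi_l^{-1}(A)$ an $\AA^m$-bundle; (iii) the fact that $\Uf_{A,l}/(H\times\GG_m)$ is a tower of $\AA^1$-bundles, proved using the ramification filtration of the possibly wild group $H$ with graded pieces inside $\GG_a$ or a tame $\mu_n$ — this is where the variation of $|\Aut(L/K)|$ in families is actually tamed; and (iv) the fact that the P-morphism $\Eis_l^{(m)}/\GG_m\to\Delta_n^{(m)}$ has closed fibers and can, after stratifying the base, be realized by an honest scheme morphism which is a ``very weak $\AA^{n(l+1)-2-m}$-bundle'', so that the defining relation of $K_0^\heartsuit(\Var/k)$ applies. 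Point (iv) in particular rests on a nontrivial closedness result (Proposition \ref{prop:closed}, proved via moduli stacks of $G$-torsors) that your proposal does not anticipate at all: without it, ``dividing by the fiber class'' has no meaning, since an arbitrary surjection with isomorphic fibers does not yield a product formula in the Grothendieck ring.
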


The author used this result as a working hypothesis in a previous
paper \cite[Section 12]{yasuda2016wildermckay} to verify a certain
duality in mass formulas. He also applies this theorem to a study
of quotient singularities in another paper in preparation \cite{yasudaquotient}. 

We now explain the outline of the proof of Theorem \ref{thm:main2}.
Our strategy is to translate Serre's arguments by using $p$-adic
measures to the motivic setting by using the theory of motivic integration,
a theory pioneered by Kontsevich \cite{kontsevich1995lecture} and
Denef--Loeser \cite{denef1999germsof}. We use a version of the theory
over a complete discrete valuation ring established by Sebag \cite{sebag2004integration}.
We consider the space of Eisenstein polynomials of degree $n$ with
coefficients in $k\llbracket t\rrbracket$, denoted by $\Eis$. We
regard this as a subspace of the arc space $\J_{\infty}(\AA_{k\llbracket t\rrbracket}^{n})$
of the affine space $\AA_{k\llbracket t\rrbracket}^{n}$. Through
the natural map $\Eis\to\Delta_{n}^{\circ}$, we relate the integral
$\int_{\Delta_{n}^{\circ}}\LL^{-\bd}$ with the motivic volume of
$\Eis/\GG_{m}$, the quotient of $\Eis$ by a natural action of $\GG_{m}$.
This motivic volume is easy to compute, leading to the motivic version
of Serre's formula. The motivic version of Bhargava's formula easily
follows from Serre's one. The motivic version of Krasner's formula
is obtained by an explicit description of the locus of Eisenstein
polynomials giving extensions of a prescribed discriminant exponent. 

Throughout the paper, we work over a field $k$ of characteristic
$p\ge0$. A $k$\emph{-variety} means a separated scheme of finite
type over $k$. We follow the convention that when $p=0$, then $p$
is coprime to any positive integer $n$ and we write $p\nmid n$.
The symbol $K$ means an extension of $k$, unless otherwise noted.
For a ring $R$, we denote by $R\llbracket t\rrbracket$ the ring
of power series with coefficients in $R$ and by $R\llparenthesis t\rrparenthesis$
its localization by $t$, which is nothing but the ring of Laurent
power series with coefficients in $R$. When $K$ is a field and $A$
is an étale $K\llparenthesis t\rrparenthesis$-algebra, we denote
by $\cO_{A}$ the integral closure of $K\llbracket t\rrbracket$ in
$A$. For a finite group $G$, a \emph{$G$-torsor} means an étale
$G$-torsor. 

\subsection*{Acknowledgements}

I wrote the first draft of the part concerning the motivic versions
of formulas by Serre and Bhargava around the autumn of 2018, intending
to contain it in my joint paper with Fabio Tonini \cite{tonini2023moduliof}.
But we decided not to contain it for reasons of time and length of
the paper. In the summer of 2024, I completed the manuscript in order
to publish it as a separate paper, containing an extra result, the
motivic version of Krasner's formula. I deeply thank Fabio Tonini
for permitting me to do this and for stimulating discussions during
our joint work. I also thank Melanie Matchett Wood for helpful discussions
during our joint works, which resulted in papers \cite{wood2015massformulas,wood2017massformulas}. 

\section{Discriminants\label{sec:Discriminants}}

Let $R$ be a ring and let $S$ be an $R$-algebra which is free of
rank $n$ as an $R$-module. For each element $s\in S$, the map $S\to S,\,x\mapsto sx$
is an $R$-linear map and its trace $\Tr(s)$ is defined as an element
of $R$. The \emph{discriminant} $D(s_{1},\dots,s_{n})$ of an $R$-module
basis $s_{1},\dots,s_{n}\in S$ is defined to be the determinant of
the $n\times n$ matrix $(\Tr(s_{i}s_{j}))_{i,j}$ with entries in
$R$. It is known that the ideal generated by $D(s_{1},\dots,s_{n})$
is independent of the choice of basis and that $S$ is étale over
$R$ if and only if $(D(s_{1},\dots,s_{n}))=R$ or equivalently $D(s_{1},\dots,s_{n})$
is an invertible element of $R$. 

Consider the case $R=\ZZ[Y_{1},\dots,Y_{n}]$, the $n$-variate polynomial
ring with integer coefficients, and 
\[
S=R[x]/(x^{n}+Y_{1}x^{n-1}+\cdots+Y_{n-1}x+Y_{n}).
\]
We define the \emph{discriminant polynomial} $F(Y_{1},\dots,Y_{n})\in R$
to be the discriminant $D(1,x,\dots,x^{n-1})$ of the $R$-basis $1,x,\dots,x^{n-1}$
of $S$. For any ring $R'$ and an $R'$-algebra 
\[
S'=R[x]/(x^{n}+y_{1}x^{n-1}+\cdots+y_{n-1}x+y_{n})\quad(y_{1},\dots,y_{n}\in R),
\]
we have $D(1,x_{S'},\dots,x_{S'}^{n-1})=F(y_{1},\dots,y_{n})$, where
$x_{S'}$ is the image of $x$ in $S'$. The $R'$-algebra $S'$ is
étale if and only if $F(y_{1},\dots,y_{n})$ is an invertible element
of $R'$. 

Next consider the case where $R=K\llparenthesis t\rrparenthesis$
with $K$ a field and 
\[
S=R[x]/(x^{n}+y_{1}x^{n-1}+\cdots+y_{n-1}x+y_{n})\quad(y_{1},\dots,y_{n}\in R).
\]
The polynomial in the last equality is called an \emph{Eisenstein
polynomial }if $\ord y_{i}>0$ for every $i$ and $\ord y_{n}=1$.
If this is the case, from \cite[p.~19]{serre1979localfields}, $S$
is a discrete valuation field with uniformizer $x$ and has residue
field isomorphic to $K$. The field extension $S/R$ is separable
if and only if $F(y_{1},\dots,y_{n})\ne0$. When $S/R$ is separable,
its \emph{discriminant exponent} is defined to be 
\[
\bd_{S/R}:=\ord F(y_{1},\dots,y_{n})\in\ZZ_{\ge0}.
\]
We often write $\bd_{S/R}$ simply as $\bd_{S}$, omitting $R$.

\section{Strong P-moduli spaces\label{sec:Strong-P-moduli-spaces}}

In this section, we recall results from \cite{tonini2023moduliof}.
Let $\Aff/k$ be the category of affine schemes over $k$ and let
$\ACF/k$ be its full subcategories consisting of spectra $\Spec K$
with $K$ an algebraically closed field. We identify a $k$-scheme
$X$ with the associated functor 
\[
(\Aff/k)^{\op}\to\Set,\,T\mapsto\Hom_{k}(T,X).
\]
For a functor $Z\colon(\Aff/k)^{\op}\to\Set$, we let $Z_{F}\colon(\ACF/k)^{\op}\to\Set$
be its restriction to $(\ACF/k)^{\op}$. 
\begin{defn}
Let $Y$ be a $k$-scheme and let $X$ be a functor $(\Aff/k)^{\op}\to\Set$
(e.g.~a $k$-scheme). A \emph{P-morphism} $f\colon Y\to X$ is a
natural transformation $Y_{F}\to X_{F}$ such that there exist morphisms
of $h\colon Z\to Y$ and $g\colon Z\to X$ of $k$-schemes such that
$h$ is surjective and locally of finite type and the following diagram
is commutative:
\begin{equation}
\xymatrix{Z_{F}\ar[d]_{h_{F}}\ar[dr]^{g_{F}}\\
Y_{F}\ar[r]_{f} & X_{F}
}
\label{eq:P-dia}
\end{equation}
We denote by $\Hom_{k}^{P}(Y,X)$ the set of P-morphisms over $k$
from $Y$ to $X$. We denote by $X^{P}$ the functor 
\[
(\Aff/k)^{\op}\to\Set,\,T\mapsto\Hom_{k}^{P}(T,X).
\]
A P-morphism $f\colon Y\to X$ is said to be a \emph{P-isomorphism
}if there exists a P-morphism $g\colon X\to Y$ such that both $f\circ g$
and $g\circ f$ are the identities. 
\end{defn}

For a scheme $X$, we denote its underlying point set by $|X|$. This
set is identified with the set of equivalence classes of geometric
points $\Spec K\to X$; two geometric points $\Spec K\to X$ and $\Spec K'\to X$
are equivalent if they fit into the commutative diagram
\[
\xymatrix{\Spec K''\ar[r]\ar[d] & \Spec K'\ar[d]\\
\Spec K\ar[r] & X
}
\]
with $K''$ also an algebraically closed field. The set $|X|$ is
equipped with the Zariski topology. A P-morphism $f\colon Y\to X$
induces a map $|f|\colon|Y|\to|X|$ in the obvious way. 
\begin{lem}[{\cite[Lemmas 4.7 and 4.32]{tonini2023moduliof} }]
Let $Y$ and $X$ be separated schemes locally of finite type over
$k$. Let $f\colon Y\to X$ be a P-morphism.
\begin{enumerate}
\item There exist geometrically bijective and finite-type morphisms $h\colon Z\to Y$
and $g\colon Z\to X$ which make diagram (\ref{eq:P-dia}) commutative. 
\item If $\Gamma_{f}\colon Y\to Y\times_{k}X$ is the graph of $f$, then
$\Image(|\Gamma_{f}|)$ is a locally constructible subset of $|Y\times_{k}X|$. 
\item $f$ is a P-isomorphism if and only if it is geometrically bijective
(that is, for every algebraically closed field $K$, $f(K)\colon Y(K)\to X(K)$
is bijective.)
\end{enumerate}
\end{lem}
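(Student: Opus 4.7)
The plan is to prove the three parts in order, starting from the morphisms $h_{0} \colon Z_{0} \to Y$ (surjective, locally of finite type) and $g_{0} \colon Z_{0} \to X$ supplied by the definition of a P-morphism.

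For (1), the aim is to refine $(h_{0},g_{0})$ so that $h$ becomes geometrically bijective and both maps become of finite type. Since $Y$ is locally of finite type over $k$, it is locally Noetherian, and I work one quasi-compact open of $Y$ at a time, using Noetherian induction on closed subsets. At each stage, pick an irreducible component with generic point $\eta$, choose a closed point $z$ in the non-empty fibre $h_{0}^{-1}(\eta)$, and take the scheme-theoretic closure of $\{z\}$ inside $Z_{0}$. The resulting finite-type map to $Y$ is set-theoretically bijective over some dense open subset of the component, and after replacing the source by a purely inseparable cover (or its reduction) one can arrange \emph{geometric} bijectivity there. Iterating on the complement produces a finite stratification of $Y$ by locally closed subschemes, each equipped with a compatible geometrically bijective finite-type lift to $Z_{0}$; gluing these yields the desired $h \colon Z \to Y$, and composition of the refining map $Z \to Z_{0}$ with $g_{0}$ gives $g \colon Z \to X$ of finite type. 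The main technical obstacle is the upgrade from set-theoretic to \emph{geometric} bijectivity, which forces the careful insertion of purely inseparable refinements to kill residue-field extensions.

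For (2), apply (1) to assume $h$ and $g$ are of finite type. The combined morphism $(h,g) \colon Z \to Y \times_{k} X$ is then of finite type between locally Noetherian schemes, and on geometric points its image equals $\Image(|\Gamma_{f}|)$: for any algebraically closed $K$ and $y \in Y(K)$, surjectivity of $h$ gives $z \in Z(K)$ with $h(z)=y$, so $(y,f(y)) = (h(z),g(z))$; conversely every $(h(z),g(z))$ lies on the graph by commutativity of (\ref{eq:P-dia}). Applying Chevalley's theorem on constructibility of images of finite-type morphisms of Noetherian schemes to each quasi-compact open of $Y \times_{k} X$ then yields the desired local constructibility.

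For (3), the direction ``P-isomorphism implies geometrically bijective'' is immediate: a P-inverse $\tilde{f}$ composes with $f$ to give the identity natural transformation on $(\ACF/k)^{\op}$, so in particular $f_{K}$ is a bijection for every algebraically closed $K$. For the converse, assume $f$ is geometrically bijective and apply (1) to obtain finite-type $h,g$ with $h$ geometrically bijective. On geometric points one has $g = f \circ h$, so $g$ is also geometrically bijective, hence surjective and of finite type. Swapping the roles of $h$ and $g$ in (\ref{eq:P-dia}) therefore defines a P-morphism $\tilde{f} \colon X \to Y$. Both $\tilde{f} \circ f$ and $f \circ \tilde{f}$ agree with the identity on every $\overline{K}$-point, and since a P-morphism is by definition a natural transformation on $(\ACF/k)^{\op}$, pointwise agreement on every algebraically closed point is equality of P-morphisms; hence $\tilde{f}$ is a P-inverse of $f$.
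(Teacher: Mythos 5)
This lemma is quoted from \cite{tonini2023moduliof} and the paper gives no proof of it, so I can only assess your argument on its own terms. Parts (2) and (3) are essentially correct granted part (1): the identification of $\Image(|\Gamma_{f}|)$ with the image of $|(h,g)|$ plus Chevalley is the right argument for (2), and in (3) the inverse $\tilde{f}=h_{F}\circ g_{F}^{-1}$ is a legitimate natural transformation witnessed by the swapped diagram, with equality of P-morphisms checked on algebraically closed points because P-morphisms \emph{are} by definition natural transformations on $(\ACF/k)^{\op}$.

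The gap is in part (1), at the step where you upgrade to geometric bijectivity while staying inside $Z_{0}$. If $\eta$ is the generic point of a component of $Y$ and $z\in h_{0}^{-1}(\eta)$ is closed in its fibre, the residue extension $\kappa(z)/\kappa(\eta)$ is finite but may have a nontrivial \emph{separable} part, and the fibre may contain no point avoiding this (e.g.\ a fibre isomorphic to $\Spec\kappa(\eta)(i)$). Then $\overline{\{z\}}\to\overline{\{\eta\}}$ is generically $d$-to-$1$ on geometric points with $d>1$, and no purely inseparable modification of the source can repair geometric injectivity, since the composite still factors through the separable part generically; your claim that the closure is ``set-theoretically bijective over some dense open'' is already false for a degree-$2$ separable cover. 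The missing idea is exactly the one you deploy only in part (2): replace $Z_{0}$ by the image of $(h_{0},g_{0})\colon Z_{0}\to Y\times_{k}X$. Because $g_{0,F}=f\circ h_{0,F}$, any two geometric points of $Z_{0}$ over the same geometric point of $Y$ have the same image in $X$, so this image is the graph of $f$ and its projection to $Y$ is \emph{automatically} bijective on geometric points. Concretely: first reduce to $h_{0},g_{0}$ of finite type over each quasi-compact open (covering it by finitely many constructible images of affines of $Z_{0}$), apply Chevalley to see the graph is locally constructible, stratify it by reduced locally closed subschemes, and take $Z$ to be their disjoint union with $h=\pr_{Y}$, $g=\pr_{X}$. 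So the logical order should be reversed: the constructibility of the graph comes first, and (1) is deduced from it, rather than (2) from (1).
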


\begin{defn}
Keeping the assumption of the lemma, we denote the locally constructible
subset $\Image(|\Gamma_{f}|)$ again by $\Gamma_{f}$. For a point
$x\colon\Spec K\to X$ with $K$ any field, the \emph{fiber} $f^{-1}(x)$
is defined to be the constructible subset 
\[
(\Gamma_{f}\times_{X,x}\Spec K)\cap\pr_{X}^{-1}(x)\subset Y\otimes_{k}K.
\]
Here $\Gamma_{f}\times_{X,x}\Spec K$ means the preimage of $\Gamma_{f}$
by the morphism
\[
\id\times x\colon Y\otimes_{k}K\to Y\times_{k}X.
\]
\end{defn}

\begin{defn}
We define the \emph{category of P-schemes over $k$}, denoted by $\PSch/k$,
to be the categories having $k$-schemes as objects and P-morphisms
over $k$ as morphisms. 
\end{defn}

\begin{defn}
Let $\cF\colon(\Aff/k)^{\op}\to\Set$. A \emph{strong P-moduli space}
of of $\cF$ is a $k$-scheme $X$ given with a morphism $\pi\colon\cF\to X^{P}$
such that the induced morphism $\pi\colon\cF^{P}\to X^{P}$ is an
isomorphism.
\end{defn}

If exists, a strong P-moduli space is unique up to a unique P-isomorphism.
By definition, if $X$ is a strong P-moduli space of $\cF$, then
for every algebraically closed field $K$, the map $\cF(K)\to X(K)$
is bijective.
\begin{thm}[{\cite[Theorem 8.9]{tonini2023moduliof}}]
\label{thm:P-moduli}For a finite group $G$, the functor 
\[
\cF_{G}\colon(\Aff/k)^{\op}\to\Set,\,\Spec R\mapsto\{G\textrm{-torsors over \ensuremath{\Spec R\tpars}}\}/{\cong}
\]
has a strong P-moduli space which is a countable coproduct of affine
$k$-varieties. Here a $G$-torsor means an étale $G$-torsor. 
\end{thm}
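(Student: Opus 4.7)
The plan is to parametrize $G$-torsors concretely by Eisenstein polynomials together with data encoding the $G$-action, stratify by discrete invariants, and realize the P-moduli space as a countable coproduct of affine varieties by exploiting the flexibility of the P-moduli framework. First I would reduce to the case of connected torsors: over a connected base $\Spec R$, a $G$-torsor $\Spec A\to\Spec R\tpars$ decomposes into $G$-orbits of geometric connected components, and each orbit is the induction $\mathrm{Ind}_{H}^{G}L$ from a connected $H$-torsor $L$, where $H\le G$ is the stabilizer of the chosen component (well-defined up to $G$-conjugacy). Thus $\cF_{G}$ breaks as a coproduct over combinatorial types (conjugacy classes of subgroups, with multiplicities encoding several orbits), reducing the problem to constructing the moduli of geometrically connected $H$-torsors for each subgroup $H\le G$.

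For each subgroup $H\le G$ of order $n$ and each non-negative integer $d$ (the discriminant exponent), I would construct an affine $k$-variety $Z_{H,d}$ parametrizing pairs consisting of (an Eisenstein polynomial $f$ of degree $n$ with coefficients in a truncated power-series ring, an $H$-action on the algebra $\cO_{L}=k\tbrats[x]/(f)$). A connected $H$-torsor over $\Spec K\tpars$ with $K$ algebraically closed is automatically totally ramified (the residue field of $K\tpars$ being $K$, which admits no nontrivial finite extensions), hence arises from some such Eisenstein polynomial by Section \ref{sec:Discriminants}. The $H$-action is determined by specifying, for each $h\in H$, the image $h\cdot x$ as a polynomial in $x$ of degree less than $n$ with coefficients in $K\tbrats$; the group-law relations $(h_{1}h_{2})\cdot x=h_{1}\cdot(h_{2}\cdot x)$, together with the freeness and transitivity conditions characterizing a torsor, cut out a closed subvariety of the ambient affine space. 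The bound on $d$ truncates the power-series coefficients to finitely many variables, making each $Z_{H,d}$ a finite-type affine $k$-variety. The full moduli space $X$ is then built as a scheme-theoretic realization of the quotient of $\coprod_{H,d}Z_{H,d}$ by the natural symmetry group action (uniformizer rescaling, change of base point by $H$, inner automorphisms of $G$ used in the induction step), in the weak sense afforded by the P-moduli framework of Section \ref{sec:Strong-P-moduli-spaces}.

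The main obstacle is verifying the strong P-moduli property, i.e., that the induced map $\cF_{G}^{P}\to X^{P}$ is an isomorphism. Surjectivity on geometric points is immediate from the Eisenstein representation theorem recalled in Section \ref{sec:Discriminants} and the classical Galois theory of complete discretely valued fields. Injectivity requires showing that two pairs (Eisenstein polynomial, $H$-action) give isomorphic $G$-torsors over $\Spec K\tpars$ precisely when they are related by the symmetries above; this amounts to a Krasner-type rigidity for Eisenstein polynomials combined with a uniqueness statement for Galois structures on a fixed totally ramified extension. Finally, one must verify constructibility of the graph of the equivalence relation so that Lemma \ref{thm:P-moduli} (part (2) of the lemma recalled earlier) applies and a scheme-theoretic P-quotient genuinely exists. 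Once these are in place, the discrete indexing set $\{(H,d)\}$ exhibits $X$ as the required countable coproduct of affine $k$-varieties.
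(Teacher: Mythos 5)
First, a point of order: Theorem \ref{thm:P-moduli} is not proved in this paper at all. It is quoted from \cite[Theorem 8.9]{tonini2023moduliof} and used as a black box, so there is no internal proof to compare yours against. The proof in the cited reference does not proceed by your ``Eisenstein polynomial plus Galois-action data'' parametrization; it rests on the prior construction of the moduli \emph{stack} of $G$-torsors over $\Spec k\tpars$ (carried out in \cite{tonini2020moduliof} for groups of the form $H\rtimes C$ with $H$ a $p$-group and $C$ tame cyclic, which suffices because inertia over an algebraically closed residue field always has this form) as an inductive limit of finite-type Deligne--Mumford stacks, followed by passage to P-moduli spaces. So even if your sketch could be completed, it would be a genuinely different route.

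As it stands, your sketch has two substantive gaps. First, the quotient step is the entire difficulty, and the tool you invoke does not perform it: part (2) of the lemma recalled in Section \ref{sec:Strong-P-moduli-spaces} only asserts that the graph of an \emph{already existing} P-morphism is locally constructible; it gives no mechanism for producing, from a constructible equivalence relation on $\coprod_{H,d}Z_{H,d}$, a $k$-variety $X$ together with a geometrically bijective map from the quotient. The equivalence relation induced by change of uniformizer has large positive-dimensional classes (compare $\psi_{l}\colon\Eis_{l}^{(m)}\to\Delta_{n}^{(m)}$ in this paper, whose fibers are the varieties $\Uf_{A,l}/H$), so constructing this quotient \emph{is} constructing the moduli space; without a substantive quotient theorem (Rosenlicht-type slicing plus Noetherian induction, or elimination of imaginaries for ACF, with attention to fields of definition) the argument is circular. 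Second, a strong P-moduli space requires a natural transformation $\cF_{G}\to X^{P}$ on \emph{all} affine bases $\Spec R$, not merely a bijection on geometric points: for an arbitrary $G$-torsor over $\Spec R\tpars$ one must exhibit a surjective, locally finite type cover $Z\to\Spec R$ over which the family acquires your normal form, compatibly on geometric points. Your proposal discusses only algebraically closed fields and never addresses this relative statement, which is where the real content of the cited theorem lies. (A minor slip: the connected components of a $G$-torsor form a single $G$-orbit, so the ``multiplicities encoding several orbits'' in your indexing do not occur.)
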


\begin{defn}
For a ring $R$, we say that a finite étale $R\tpars$-algebra $A$
is of \emph{degree $n$} (resp.~\emph{of discriminant exponent $m$},
\emph{connected}) if for every point $\Spec K\to\Spec R$, the induced
$K\tpars$-algebra $A\otimes_{R\tpars}K\tpars$ is of degree $n$
(resp.~of discriminant exponent $m$, a field). For $n\in\ZZ_{>0}$
and $m\in\ZZ_{\ge0}$, we define the following functors:
\begin{align*}
\cF_{n}\colon(\Aff/k)^{\op} & \to\Set\\
\Spec R & \mapsto\{\textrm{finite étale \ensuremath{R\tpars}-algebras of degree \ensuremath{n}}\}/{\cong},
\end{align*}
\begin{align*}
\cF_{n}^{\circ}\colon(\Aff/k)^{\op} & \to\Set\\
\Spec R & \mapsto\{\textrm{connected finite étale \ensuremath{R\tpars}-algebras of degree \ensuremath{n}}\}/{\cong}
\end{align*}
and 
\begin{align*}
\cF_{n}^{(m)}\colon(\Aff/k)^{\op} & \to\Set\\
\Spec R & \mapsto\left\{ \begin{gathered}\textrm{connected finite étale \ensuremath{R\tpars}-algebras of}\\
\textrm{degree \ensuremath{n} and discriminant exponent \ensuremath{m}}
\end{gathered}
\right\} /{\cong}.
\end{align*}
\end{defn}

\begin{cor}
The functor $\cF_{n}$, $\cF_{n}^{\circ}$ and $\cF_{n}^{(m)}$ have
strong P-moduli spaces which are coproducts of countably many affine
$k$-varieties.
\end{cor}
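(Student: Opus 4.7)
The plan is to derive the corollary from Theorem~\ref{thm:P-moduli} by first realizing $\cF_n$ as the functor of $S_n$-torsors and then treating $\cF_n^\circ$ and $\cF_n^{(m)}$ as constructible subfunctors of $\cF_n$.

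I would begin by exhibiting a natural isomorphism $\cF_n\cong\cF_{S_n}$ of functors $(\Aff/k)^{\op}\to\Set$. The classical dictionary between degree-$n$ finite \'etale covers and $S_n$-torsors assigns to a finite \'etale $R\tpars$-algebra $A$ of degree $n$ the $S_n$-torsor of orderings $\mathrm{Isom}_{R\tpars}(\{1,\dots,n\}_{\Spec R\tpars},\Spec A)$, while an $S_n$-torsor $T$ recovers an \'etale algebra via the contracted product $T\times^{S_n}\{1,\dots,n\}$. These assignments are functorial in $R$ and mutually inverse on isomorphism classes, so Theorem~\ref{thm:P-moduli} applied to $G=S_n$ immediately produces the strong P-moduli space $\Delta_n$ of $\cF_n$ as a countable coproduct of affine $k$-varieties.

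For $\cF_n^\circ$ and $\cF_n^{(m)}$, the next step is to show that their defining conditions are constructible in families: for any ring $R$ and any $A\in\cF_n(R)$, the locus in $|\Spec R|$ over which the geometric fiber is a field (respectively, has discriminant exponent equal to $m$) is constructible. For connectedness this reduces to a statement about transitivity of the monodromy action of $\Gal(\overline{K\tpars}/K\tpars)$ on the $n$-element geometric fiber of $\Spec(A\otimes K\tpars)$, a condition visible after passing to a suitable \'etale cover and testable by polynomial data in a defining equation; for the discriminant exponent one uses that \'etale-locally on $\Spec R$ the algebra $A$ admits a monogenic presentation $R\tpars[x]/f(x)$, whence $\bd_A=\ord\disc(f)$ is a constructible function of the coefficients of $f$ as discussed in Section~\ref{sec:Discriminants}.

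Consequently, the subsets of $|\Delta_n|$ consisting of points whose associated algebra is connected (respectively, has discriminant exponent $m$) are constructible. Every constructible subset of a countable coproduct of affine varieties decomposes as a countable disjoint union of locally closed subvarieties, so we may take this disjoint union as the candidate strong P-moduli space $\Delta_n^\circ$ (resp.\ $\Delta_n^{(m)}$). The main obstacle is verifying the universal property: given a family in $\cF_n^\circ(R)$ or $\cF_n^{(m)}(R)$, the induced P-morphism to $\Delta_n$ must refine to a P-morphism landing in the chosen subvariety. This is handled by exploiting the flexibility in diagram~(\ref{eq:P-dia}): after stratifying $\Spec R$ according to the constructibility output above, one replaces the auxiliary cover $Z\to\Spec R$ by a surjective, locally-finite-type cover refining this stratification, so that the resulting map to $\Delta_n$ takes values in the desired locally closed piece, yielding the required P-morphism into $\Delta_n^\circ$ or $\Delta_n^{(m)}$.
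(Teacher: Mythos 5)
Your overall route is the one the paper takes: identify $\cF_{n}$ with $\cF_{S_{n}}$ and invoke Theorem \ref{thm:P-moduli}, then cut $\cF_{n}^{\circ}$ and $\cF_{n}^{(m)}$ out of $\cF_{n}$ by locally constructible conditions. The difference is that the paper delegates both constructibility statements and the passage from a locally constructible subfunctor to a strong P-moduli space to cited results of Tonini--Yasuda (Lemma 8.7, Theorem 8.9, and Theorem 9.8 on $v$-functions, via the identification of $\bd$ with the Artin conductor), whereas you try to supply these from scratch, and it is exactly there that your argument has gaps.

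The concrete problem is your treatment of the discriminant exponent. For a monogenic presentation $A\cong R\tpars[x]/(f)$ of the \emph{\'etale algebra}, the quantity $\ord\disc(f)$ computed fiberwise is not $\bd_{A}$: it differs from the discriminant of the maximal order by twice the index of $K\tbrats[x]/(f)$ in $\cO_{A}$ (e.g.\ $f=x^{2}-t^{3}$ in odd characteristic gives $\ord\disc(f)=3$ while $\bd=1$). The formula in Section \ref{sec:Discriminants} is used in the paper only for Eisenstein polynomials, where the monogenic order \emph{is} the maximal order. So you would need, \'etale-locally on $\Spec R$, a monogenic presentation of the family of \emph{integral closures}, and the existence of such presentations varying well in $R$ is not obvious — the paper's own Remark (the example $f=x^{2}+(st+(s+1)t^{2})x+t$) shows that fiberwise good behaviour does not propagate to the family, which is why constructibility of $\bd$ is a genuine theorem ([tonini2023moduliof, Theorem 9.8]) rather than a routine check. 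Separately, your verification of the universal property only addresses one direction: you arrange that a family in $\cF_{n}^{(m)}(R)$ induces a P-morphism landing in the chosen constructible piece, but a strong P-moduli space requires the induced map $(\cF_{n}^{(m)})^{P}\to X^{P}$ to be an \emph{isomorphism}, i.e.\ you must also show every P-morphism to the piece arises from such a family, uniquely; this is again the content of the cited Theorem 8.9 and is not established by refining the cover $Z\to\Spec R$.
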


\begin{proof}
Since $\cF_{n}$ is isomorphic to $\cF_{S_{n}}$, Theorem \ref{thm:P-moduli}
shows that $\cF_{n}$ has a strong P-moduli space which is a coproduct
of countably many affine $k$-varieties. From \cite[Lemma 8.7 and Theorem 8.9]{tonini2023moduliof},
$\cF_{n}^{\circ}$ also has a strong P-moduli space which is a coproduct
of countably many affine $k$-varieties. From \cite{wood2015massformulas},
the discriminant exponent function $\bd\colon\cF_{n}\to\ZZ_{\ge0}$,
which is identified with the Artin conductor $\ba\colon\cF_{S_{n}}\to\ZZ_{\ge0}$,
is a special case of $v$-function $\bv\colon\cF_{S_{n}}\to\ZZ_{\ge0}$.
From \cite[Theorem 9.8]{tonini2023moduliof}, $\bd$ is a locally
constructible function. Hence, the property ``$\bd=m$'' is locally
constructible. From \cite[Theorem 8.9]{tonini2023moduliof}, $\cF_{n}^{(m)}$
also has a P-moduli spaces which is a coproduct of countably many
affine $k$-varieties.
\end{proof}
\begin{defn}
We denote by $\Delta_{n}^{\circ}$ and $\Delta_{n}^{(m)}$ P-moduli
spaces of $\cF_{n}^{\circ}$ and $\cF_{n}^{(m)}$, which are coproducts
of countably many affine $k$-varieties.
\end{defn}

By the definition of strong P-moduli space, for each connected finite
étale $R\tpars$-algebra of degree $n$ and discriminant exponent
$m$, we have the induced P-morphism $\Spec R\to\Delta_{n}^{(m)}$.
\begin{rem}
\label{rem:const-subsets}Let us write $\Delta_{n}^{\circ}=\coprod_{i\in I}W_{i}$,
where $I$ is a countable set and $W_{i}$ are $k$-varieties. Then,
locally constructible subsets and constructible subsets of $\Delta_{n}^{\circ}$
are characterized as follows. A subset $C\subset\Delta_{n}^{\circ}$
is locally constructible if and only if for every $i$, $C\cap W_{i}$
is a constructible subset of $W_{i}$. A locally constructible subset
$C\subset\Delta_{n}^{\circ}$ is constructible if it is quasi-compact
or equivalently if it is contained in $\bigcup_{i\in I_{0}}W_{i}$
for a finite subset $I_{0}\subset I$. Note that whether a subset
$C\subset\Delta_{n}^{\circ}$ is locally constructible (resp.~constructible)
is independent of the choice of P-moduli space: if $(\Delta_{n}^{\circ})'$
is another P-moduli space of $\cF_{n}^{\circ}$ and $C'\subset(\Delta_{n}^{\circ})'$
is the subset corresponding to $C$, then $C$ is locally constructible
(resp.~constructible) if and only if so is $C'$.
\end{rem}

\section{The space of Eisenstein polynomials}

In this section, we construct the space of Eisenstein polynomials
as a subspace of an arc space and study its properties. We refer the
reader to \cite{chambert-loir2018motivic} for details on arc spaces,
in particular, from the viewpoint of motivic integration. 

Let $V:=\AA_{k\llbracket t\rrbracket}^{n}=\Spec k\llbracket t\rrbracket[x_{1},\dots,x_{n}]$
and let $\J_{\infty}V$ and $\J_{l}V$, $l\in\ZZ_{\ge0}$, be its
arc scheme and jet schemes. Namely, for a $k$-algebra $R$, we have
\begin{align*}
(\J_{\infty}V)(R) & =\Hom_{k\llbracket t\rrbracket}(\Spec R\llbracket t\rrbracket,V),\\
(\J_{l}V)(R) & =\Hom_{k\llbracket t\rrbracket}(\Spec R\llbracket t\rrbracket/(t^{l+1}),V).
\end{align*}
For $l',l\in\ZZ_{\ge0}$ with $l'\ge l$, we have truncation morphisms
\[
\pi_{l}\colon\J_{\infty}V\to\J_{l}V\text{ and }\pi_{l}^{l'}\colon\J_{l'}V\to\J_{l}V.
\]
 
\begin{defn}
For a $k$-algebra $R$, $R$-points of $\J_{\infty}V$ correspond
to $n$-tuples of power series $y=(y_{1},\dots,y_{n})\in R\llbracket t\rrbracket^{n}$.
We let them correspond also to polynomials 
\[
f_{y}(x):=x^{n}+y_{1}x^{n-1}+\cdots+y_{n-1}x+y_{n}\in R\llbracket t\rrbracket[x].
\]
We let $A_{y}$ be the $R\llparenthesis t\rrparenthesis$-algebra
$R\llparenthesis t\rrparenthesis[x]/(f_{y}(x))$. 
\end{defn}

As we saw in Section \ref{sec:Discriminants}, the extension $A_{y}/L\llparenthesis t\rrparenthesis$
is separable if and only if $F(y)\ne0$. 

\begin{defn}
For indeterminates $Y_{i,j}$ ($1\le i\le n$, $j\ge0$) and for integers
$m\ge0$, we define polynomials $F_{m}(Y_{i,j})\in\ZZ[Y_{i,j};j\le m]$
by 
\[
F\left(\sum_{j\ge0}Y_{1,j}t^{j},\dots,\sum_{j\ge0}Y_{n,j}t^{j}\right)=\sum_{m\ge0}F_{m}(Y_{i,j})t^{m}.
\]
\end{defn}

\begin{defn}
Let $\Eis\subset\J_{\infty}V$ (resp. $\Eis^{\sep},\Eis^{(m)}$) to
be the locus of Eisenstein polynomials (resp. separable Eisenstein
polynomials, Eisenstein polynomials whose discriminant have order
$m$). 
\end{defn}

If we write $y_{i}=\sum_{j\in\ZZ_{\ge0}}y_{i,j}t^{j}$, then the above
loci are described as
\begin{align*}
\Eis & =\{(y_{i,j})\mid\text{for every \ensuremath{i}, }y_{i,0}=0\text{ and }y_{n,1}\ne0\},\\
\Eis^{\sep} & =\{(y_{i,j})\in\Eis\mid\text{for some \ensuremath{m}, }F_{m}(y_{i,j})\ne0\},\\
\Eis^{(m)} & =\{(y_{i,j})\in\Eis\mid\text{for \ensuremath{m'\le m}, }F_{m'}(y_{i,j})=0\text{ and }F_{m}(y_{i,j})\ne0\}.
\end{align*}
These are locally closed subsets of $\J_{\infty}V$. We have $\Eis^{\sep}=\bigsqcup_{m\ge0}\Eis^{(m)}$.
For $y\in\Eis^{\sep}(K)$, the associated extension $A_{y}/K\tpars$
is separable and totally ramified. Let $\varpi\in A_{y}=K\llparenthesis t\rrparenthesis[x]/(f_{y}(x))$
denote the image of $x$, which is a uniformizer of $A_{y}$. As is
well-known, the discriminant exponent $\bd_{A_{y}/K\tpars}$ of $A_{y}/K\tpars$
is also equal to
\begin{equation}
n\ord f'(\varpi)=n\ord\left(\sum_{i=0}^{n}(n-i)y_{i}\varpi^{n-i-1}\right)\label{eq:disc}
\end{equation}
with $y_{0}=1$ (for example, see \cite[Proposition 6 on p. 50 and  Corollary 2 on p. 56]{serre1979localfields}).
Here we denote the unique extension of the valuation $\ord\colon K\tpars\to\ZZ_{\ge0}\cup\{\infty\}$
to $A_{y}$ again by $\ord$. This equality shows that if $p\nmid n$,
then $\bd_{A_{y}/L\tpars}=n-1$ and hence $\Eis^{(m)}=\emptyset$
for $m\ne n-1$. For $n$ with $p\nmid n$, we have the following
explicit description of $\Eis^{(m)}$. 

\begin{prop}
\label{prop:Eis-m-explicit}Suppose that $p>0$ and $p\mid n$. Let
\[
y=(y_{1},\dots,y_{n})\in K\tbrats^{n}=\Eis(K).
\]
We have $y\in\Eis^{(m)}(K)$ if and only if for every $i\in\{1,\dots,n-1\}$
with $p\nmid(n-i)$, the inequality 
\[
\ord y_{i}\ge\left\lceil \frac{m-n+1+i}{n}\right\rceil 
\]
holds and the equality in this inequality holds if $m+i+1\in n\ZZ$.
\end{prop}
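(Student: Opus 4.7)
My plan is to translate the condition $y\in\Eis^{(m)}(K)$ into the given inequality conditions via the formula $\bd_{A_y/K\tpars}=n\ord f'(\varpi)$ from equation~(\ref{eq:disc}), where $\varpi$ is the image of $x$ in $A_y$ (a uniformizer) and $\ord$ denotes the unique extension of the valuation on $K\tpars$ to $A_y$ with $\ord\varpi=1/n$. First I would expand
\[
f'(\varpi)=\sum_{i=0}^{n-1}(n-i)\,y_{i}\,\varpi^{n-i-1}\qquad (y_{0}=1).
\]
Since $K$ has characteristic $p$ and $p\mid n$, the $i=0$ term vanishes, and more generally the $i$-th term vanishes precisely when $p\mid(n-i)$. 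So only the \emph{admissible} indices $i\in\{1,\dots,n-1\}$ with $p\nmid(n-i)$ contribute to the sum.

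The crucial point is that distinct nonzero terms have pairwise distinct valuations. Indeed the $i$-th term has valuation $\ord y_{i}+(n-i-1)/n$, whose fractional part is $(n-i-1)/n\bmod 1$; as $i$ ranges over $\{1,\dots,n-1\}$ the residues $(n-i-1)\bmod n$ are pairwise distinct. Consequently the valuation of the sum equals the minimum of the valuations of the individual nonzero terms, giving
\[
\ord f'(\varpi)=\min_{i\text{ admissible}}\left(\ord y_{i}+\tfrac{n-i-1}{n}\right)
\]
with the convention that terms with $y_{i}=0$ contribute $+\infty$.

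Multiplying by $n$, the condition $\bd_{A_y/K\tpars}=m$ becomes $\min_{i\text{ admissible}}(n\ord y_{i}+n-i-1)=m$, which splits into (a)~the inequality $n\ord y_{i}+n-i-1\ge m$ for every admissible $i$ and (b)~equality at some admissible $i$. Since $\ord y_{i}\in\ZZ\cup\{\infty\}$, (a) is equivalent to $\ord y_{i}\ge\lceil(m-n+1+i)/n\rceil$, the inequality of the proposition. Attaining equality in (b) requires $(m-n+1+i)/n\in\ZZ$, i.e.~$m+i+1\in n\ZZ$, in which case $\ord y_{i}=\lceil(m-n+1+i)/n\rceil$. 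Because at most one $i\in\{1,\dots,n-1\}$ satisfies $m+i+1\in n\ZZ$, condition (b) exactly says that equality in the ceiling inequality holds at this distinguished index. Chaining these equivalences gives both directions of the proposition.

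The only delicate step is the identification of $\ord f'(\varpi)$ with the minimum of the valuations of the individual terms, and this rests on the elementary injectivity of $i\mapsto(n-i-1)\bmod n$ on $\{1,\dots,n-1\}$. Everything else is a routine translation between a continuous inequality and its integer ceiling form, so I do not foresee any significant obstacle.
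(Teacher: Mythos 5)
Your proof is correct and follows essentially the same route as the paper's: both use formula (\ref{eq:disc}), observe that only indices with $p\nmid(n-i)$ contribute, note that the surviving terms have pairwise distinct valuations because $n\ord\bigl((n-i)y_i\varpi^{n-i-1}\bigr)\equiv n-i-1\pmod{n}$, and hence identify $n\ord f'(\varpi)$ with the minimum of $n\ord y_i+n-i-1$ over admissible $i$. Your explicit translation of that minimum condition into the ceiling inequality plus the equality clause is just a slightly more detailed write-up of the step the paper leaves implicit.
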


\begin{proof}
In the situation of the proposition, we have
\[
\bd_{A_{y}/K\tpars}=n\ord\left(\sum_{\substack{0\le i<n\\
p\nmid(n-i)
}
}(n-i)y_{i}\varpi^{n-i-1}\right).
\]
For $i$ with $p\nmid(n-i)$, we have
\begin{align*}
n\ord\left((n-i)y_{i}\varpi^{n-i-1}\right) & \equiv n-i-1\mod n\ZZ.
\end{align*}
In particular, for distinct $i$'s, these values are different to
one another. Therefore,
\[
n\ord f'(\varpi)=\min\left\{ n-i-1+n\ord y_{i}\mid0\le i<n,\,p\nmid(n-i)\right\} .
\]
Moreover, if $\bd_{A_{y}/K\tpars}=m$, the minimum is attained at
$i$ with $m+i+1\in n\ZZ$. This shows the lemma. 
\end{proof}
\begin{defn}
For $l\in\ZZ_{\ge0}$, we let $\Eis_{l}^{(m)}\subset\J_{l}V$ be the
image of $\Eis^{(m)}$ by $\pi_{l}$. 
\end{defn}

\begin{cor}
\label{cor:pi_l(Eis^m)}Let $c:=m-n+1$. Suppose that $p\mid n$,
$p\nmid c$ and $c\ge0$. For $l\ge\lfloor(c+n-1)/n\rfloor=\lfloor m/n\rfloor$,
$\Eis_{l}^{(m)}$ is a locally closed subset of $\J_{l}V$. Moreover,
if we give it the reduced scheme structure, then
\[
\Eis_{l}^{(m)}\cong\GG_{m}^{2}\times\AA_{k}^{nl-c+\lfloor c/p\rfloor-1}.
\]
In particular, $\Eis_{l}^{(m)}$ is an affine variety. 
\end{cor}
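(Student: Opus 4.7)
The plan is to read off $\Eis_{l}^{(m)}$ directly from the coordinate-wise conditions provided by Proposition \ref{prop:Eis-m-explicit}, and then verify the claimed dimension via a combinatorial identity. Set $a_{i}:=\lceil(c+i)/n\rceil$ for $1\le i\le n-1$ and
\[
I_{1}:=\{\,i\in\{1,\dots,n-1\}\mid p\nmid(n-i)\,\}.
\]
Because $p\mid n$, the assumption $p\nmid c$ forces $n\nmid c$, so there is a unique $i_{0}\in\{1,\dots,n-1\}$ with $n\mid(c+i_{0})$; set $j_{0}:=(c+i_{0})/n=a_{i_{0}}$. One checks $i_{0}\in I_{1}$ because $n-i_{0}\equiv c\pmod{p}$ and $p\nmid c$. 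By Proposition \ref{prop:Eis-m-explicit}, $(y_{i,j})\in\Eis^{(m)}(K)$ is characterized by: $y_{i,0}=0$ for $1\le i\le n$, $y_{n,1}\ne0$, $y_{i,j}=0$ for $i\in I_{1}$ with $0\le j<a_{i}$, and $y_{i_{0},j_{0}}\ne0$.

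Next I would check that the hypothesis $l\ge\lfloor m/n\rfloor$ makes all of these conditions visible at level $l$: the largest $a_{i}$ is $a_{n-1}=\lceil m/n\rceil\le\lfloor m/n\rfloor+1\le l+1$, and the integer $j_{0}\le m/n$ satisfies $j_{0}\le\lfloor m/n\rfloor\le l$. Conversely, any truncation $(y_{i,j})_{j\le l}$ satisfying the restricted conditions lifts to a full arc in $\Eis^{(m)}$ by prescribing the higher coefficients freely. Thus $\Eis_{l}^{(m)}$ is exactly the locally closed subset of $\J_{l}V=\AA_{k}^{n(l+1)}$ cut out by these equalities and non-vanishings. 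Endowed with its reduced structure, it is visibly a product of $\GG_{m}^{2}$ (from the two non-vanishing coordinates $y_{n,1}$ and $y_{i_{0},j_{0}}$) with an affine space of dimension
\[
N=n(l+1)-n-\sum_{i\in I_{1}}(a_{i}-1)-2=nl+|I_{1}|-\sum_{i\in I_{1}}a_{i}-2,
\]
the subtracted terms accounting respectively for the $n$ Eisenstein equations $y_{i,0}=0$, the extra $a_{i}-1$ equations for each $i\in I_{1}$, and the two non-vanishing coordinates.

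The remaining task, which I expect to be the main obstacle, is to show $N=nl-c+\lfloor c/p\rfloor-1$, or equivalently
\[
\sum_{i\in I_{1}}a_{i}=|I_{1}|+c-\lfloor c/p\rfloor-1.
\]
Since $(c+i)/n\in\ZZ$ precisely when $i=i_{0}$, we have $\sum_{i\in I_{1}}a_{i}=(|I_{1}|-1)+\sum_{i\in I_{1}}\lfloor(c+i)/n\rfloor$, so the claim reduces to $\sum_{i\in I_{1}}\lfloor(c+i)/n\rfloor=c-\lfloor c/p\rfloor$. Writing $c=nq_{1}+r_{1}$ with $0<r_{1}<n$, a direct floor count gives $\sum_{i=1}^{n-1}\lfloor(c+i)/n\rfloor=(n-1)q_{1}+r_{1}$. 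For the complement $I_{1}^{c}=\{\,kp\mid1\le k\le n/p-1\,\}$, the hypothesis $p\nmid c$ ensures $(c+kp)/n\notin\ZZ$, and a parallel computation (using $\lceil(n-r_{1})/p\rceil=n/p-\lfloor r_{1}/p\rfloor$) yields $\sum_{i\in I_{1}^{c}}\lfloor(c+i)/n\rfloor=(n/p-1)q_{1}+\lfloor r_{1}/p\rfloor$. Subtracting and simplifying via $\lfloor c/p\rfloor=(n/p)q_{1}+\lfloor r_{1}/p\rfloor$ gives the identity, whence $\Eis_{l}^{(m)}\cong\GG_{m}^{2}\times\AA_{k}^{nl-c+\lfloor c/p\rfloor-1}$ is in particular affine.
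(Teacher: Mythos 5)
Your proposal is correct and follows essentially the same route as the paper: read off $\Eis_{l}^{(m)}$ as an explicit locally closed coordinate locus via Proposition \ref{prop:Eis-m-explicit}, identify the two non-vanishing coordinates $y_{n,1}$ and $y_{i_0,j_0}$ as the $\GG_m^2$ factor, and reduce the dimension count to the identity $\sum_{i\in I_1}\lfloor(c+i)/n\rfloor=c-\lfloor c/p\rfloor$. The only (immaterial) difference is that you verify this floor-sum identity by a direct residue count, whereas the paper invokes Hermite's identity.
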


\begin{proof}
The jet scheme $\J_{l}V$ is the affine space $\AA_{k}^{n(l+1)}$
with the coordinates $y_{i,j}$, $1\le i\le n$, $0\le j\le l$. The
subset $\Eis_{l}^{(m)}$ of it is defined by 
\[
\begin{cases}
y_{i,0}=0 & (1\le i\le n),\\
y_{n,1}\ne0,\\
y_{i,j}=0 & \left(i<n,\,p\nmid(n-i),\,j<\left\lceil \frac{c+i}{n}\right\rceil \right),\\
y_{i,\frac{c+i}{n}}\ne0 & \left(i<n,\,p\nmid(n-i),\,c+i\in n\ZZ\right),
\end{cases}
\]
which shows that $\Eis_{l}^{(m)}$ is a locally closed subset. The
last two conditions can be rephrased as:
\[
\begin{cases}
y_{i,j}=0 & \left(i<n,\,p\nmid(n-i),\,c+i\notin n\ZZ,\,j\le\left\lfloor \frac{c+i}{n}\right\rfloor \right)\\
y_{i,j}=0 & \left(i<n,\,p\nmid(n-i),\,c+i\in n\ZZ,\,j<\left\lfloor \frac{c+i}{n}\right\rfloor \right)\\
y_{i,\frac{c+i}{n}}\ne0 & \left(i<n,\,p\nmid(n-i),\,c+i\in n\ZZ\right).
\end{cases}
\]
Thus, we have 
\[
\Eis_{l}^{(m)}\cong\GG_{m}^{2}\times\AA_{k}^{nl-s-1},
\]
where 
\[
s=\sum_{\substack{1\le i<n\\
p\nmid(n-i)
}
}\left\lfloor \frac{c+i}{n}\right\rfloor .
\]
Let us write $n=pn'$. From Hermite's identity (for example, see \cite[Chapter 12]{svetoslavsavchev2003mathematical}),
we have 
\begin{align*}
s & =\sum_{i=1}^{n-1}\left\lfloor \frac{c}{n}+\frac{i}{n}\right\rfloor -\sum_{i=1}^{n'-1}\left\lfloor \frac{c}{n}+\frac{i}{n'}\right\rfloor \\
 & =\left\lfloor n\frac{c}{n}\right\rfloor -\left\lfloor n'\frac{c}{n}\right\rfloor \\
 & =c-\left\lfloor \frac{c}{p}\right\rfloor .
\end{align*}
\end{proof}
\begin{lem}
For a $k$-algebra $R$ and for a point $y\in\Eis^{(m)}(R)$, $A_{y}$
is étale over $R\llparenthesis t\rrparenthesis$. 
\end{lem}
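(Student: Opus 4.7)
The plan is to translate the scheme-theoretic condition ``$y\in\Eis^{(m)}(R)$'' into a statement about the discriminant $F(y)\in R\llbracket t\rrbracket$, and then invoke the discriminant criterion for \'etaleness recalled in Section \ref{sec:Discriminants}.

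First I would unpack what an $R$-point of $\Eis^{(m)}$ really is. By the set-theoretic description just given, $\Eis^{(m)}\subset\J_{\infty}V$ is the locally closed subscheme cut out by the equations $F_{m'}(y_{i,j})=0$ for $0\le m'<m$ together with the open condition $F_{m}(y_{i,j})\ne 0$. A morphism $\Spec R\to\Eis^{(m)}$ therefore corresponds to a tuple $y=(y_{1},\dots,y_{n})\in R\llbracket t\rrbracket^{n}$ satisfying the Eisenstein condition and such that
\[
F_{m'}(y)=0\ \text{in}\ R\ \text{for}\ 0\le m'<m,\qquad F_{m}(y)\in R^{\times}.
\]
The key point, which I would emphasize explicitly, is that passing through the open subscheme $\{F_{m}\ne 0\}$ forces $F_{m}(y)$ to be an \emph{invertible} element of $R$, not merely a nonzero one.

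Next I would compute the discriminant $F(y)\in R\llbracket t\rrbracket$ by plugging $y$ into the power series expansion that defines the $F_{m'}$. The first $m$ coefficients vanish, so
\[
F(y)=\sum_{m'\ge 0}F_{m'}(y)\,t^{m'}=t^{m}\cdot\bigl(F_{m}(y)+F_{m+1}(y)\,t+F_{m+2}(y)\,t^{2}+\cdots\bigr).
\]
The series in the parentheses has constant term $F_{m}(y)\in R^{\times}$, so it is a unit in $R\llbracket t\rrbracket$. Multiplying by $t^{m}$, which is a unit in $R\llparenthesis t\rrparenthesis$, we conclude that $F(y)$ is a unit in $R\llparenthesis t\rrparenthesis$.

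Finally I would close the argument by recalling from Section \ref{sec:Discriminants} that $A_{y}=R\llparenthesis t\rrparenthesis[x]/(f_{y}(x))$ is free of rank $n$ over $R\llparenthesis t\rrparenthesis$ with basis $1,x,\dots,x^{n-1}$, whose discriminant equals $F(y_{1},\dots,y_{n})=F(y)$. Since an algebra free of finite rank is \'etale precisely when its discriminant is invertible in the base, the invertibility of $F(y)$ just established yields the claim. There is essentially no hard step here: the only potential pitfall is the first paragraph, namely the conceptual point that the functor of points of the locally closed subscheme $\Eis^{(m)}$ encodes invertibility of $F_{m}(y)$ rather than just its non-vanishing, which is exactly what makes the direct passage from ``$\Eis^{(m)}$-valued'' to ``\'etale'' work.
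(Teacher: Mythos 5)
Your argument is the same as the paper's: the defining conditions of $\Eis^{(m)}$ force $F_{m'}(y)=0$ for $m'<m$ and $F_{m}(y)\in R^{\times}$, so $F(y)=t^{m}\cdot(\text{unit})$ is invertible in $R\llparenthesis t\rrparenthesis$, and the discriminant criterion from Section 2 gives \'etaleness. The paper compresses this into one sentence; your expanded version, including the remark that the open condition yields invertibility rather than mere non-vanishing, is correct.
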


\begin{proof}
Since the leading coefficient $F_{m}(y_{i,j})$ of $F(y_{1},\dots,y_{n})$
is an invertible element of $R$, $F(y)\in R\llparenthesis t\rrparenthesis$
is invertible and $A_{y}/R\llparenthesis t\rrparenthesis$ is étale.
\end{proof}
Since $\Eis^{(m)}$ itself is an affine scheme having the coordinate
ring 
\[
S=k[y_{i,j},y_{n,1}^{-1},F_{m}^{-1}\mid i\in\{1,\dots,n\},j\in\ZZ_{\ge0}]/(F_{m'}\mid m'\le m),
\]
we have the corresponding étale algebra over $S\tpars$ and the induced
P-morphism
\begin{gather*}
\psi^{(m)}\colon\Eis^{(m)}\to\Delta_{n}^{(m)}.
\end{gather*}
For each point $y\in\Eis^{(m)}(R)$, the composition P-morphism
\[
\Spec R\xrightarrow{y}\Eis^{(m)}\to\Delta_{n}^{(m)}
\]
is the P-morphism associated to the $R\tpars$-algebra $A_{y}$. 
\begin{rem}
For $y\in\Eis(R)$, even if $K\llparenthesis t\rrparenthesis\to K\llparenthesis t\rrparenthesis[x]/(f_{y})$
is étale for every point $\Spec K\to\Spec R$ with $K$ a field, the
map $R\llparenthesis t\rrparenthesis\to R\llparenthesis t\rrparenthesis[x]/(f_{y})$
is not generally étale as the following example exists. Suppose that
$k$ has characteristic two. Let $R=k[s]$ and $f=x^{2}+(st+(s+1)t^{2})x+t\in R\llparenthesis t\rrparenthesis[x]$.
Then $R\llparenthesis t\rrparenthesis[x]/(f)$ is not étale over $R\llparenthesis t\rrparenthesis$.
Indeed, $df/dx=st+(s+1)t^{2}$ is not a unit in $R\llparenthesis t\rrparenthesis$.
Hence it is not a unit in $R\llparenthesis t\rrparenthesis[x]/(f)$
either. On the other hand, for any point $\Spec K\to\Spec R$, since
$df/dx$ is a unit in $K\llparenthesis t\rrparenthesis$, the induced
map $K\llparenthesis t\rrparenthesis\to K\llparenthesis t\rrparenthesis[x]/(f)$
is étale. This explains why we need to decompose $\Eis^{\sep}$ into
subsets $\Eis^{(m)}$ to have a map to $\Delta_{n}^{\circ}$.
\end{rem}

For $y\in\Eis_{l}^{(m)}(R)\subset(R\llbracket t\rrbracket/(t^{l+1}))^{n}$,
let $\tilde{y}\in\Eis^{(m)}(R)\subset R\llbracket t\rrbracket^{n}$
be its canonical lift given by 
\[
\tilde{y}_{i,j}=\begin{cases}
y_{i,j} & (j\le m)\\
0 & (j>m).
\end{cases}
\]
For $l\ge\lfloor m/n\rfloor$, the assignment $y\mapsto\widetilde{y}$
defines a morphism $\Eis_{l}^{(m)}\to\Eis^{(m)}$, which is a section
of $\pi_{l}|_{\Eis^{(m)}}\colon\Eis^{(m)}\to\Eis_{l}^{(m)}$. We define
the morphism 
\[
\psi_{l}=\psi_{l}^{(m)}\colon\Eis_{l}^{(m)}\to\Delta_{n}^{\circ},\,y\mapsto A_{\tilde{y}},
\]
which is the composition of the section $\Eis_{l}^{(m)}\to\Eis^{(m)}$
and $\psi\colon\Eis^{(m)}\to\Delta_{n}^{(m)}$. 

We need the following lemma, which is a variant of Fontaine's \cite[Prop. ~1.5]{fontaine1985ilny}.
\begin{lem}
\label{lem:Fontaine-variant}Let $L/K\llparenthesis t\rrparenthesis$
be a finite separable extension and $E/K\llparenthesis t\rrparenthesis$
any algebraic extension. Let $\cO_{L}$ and $\cO_{E}$ be the integral
closures of $K\tbrats$ in $L$ and $E$, respectively. Let $l$ be
an integer with  $l>\bd_{L/K\llparenthesis t\rrparenthesis}$. Suppose
that there exists a $K\llbracket t\rrbracket$-algebra homomorphism
$\eta\colon\cO_{L}\to\cO_{E}/t^{l}\cO_{E}$. Then there exists a $K\llparenthesis t\rrparenthesis$-embedding
$L\to E$.
\end{lem}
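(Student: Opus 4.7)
The plan is to adapt the classical proof of Fontaine's lemma \cite{fontaine1985ilny} to this setting. I first reduce to the case where $L/K\tpars$ is totally ramified: if $L^{\mathrm{ur}}$ is the maximal unramified subextension of $L/K\tpars$, then the restriction of $\eta$ to $\cO_{L^{\mathrm{ur}}}$ induces, by reducing modulo the maximal ideal of $\cO_E$, an embedding of the residue field of $L^{\mathrm{ur}}$ into that of $E$, which in turn lifts to a $K\tpars$-embedding $L^{\mathrm{ur}}\hookrightarrow E$. After replacing $K\tpars$ by the image of $L^{\mathrm{ur}}$, I may assume $L/K\tpars$ is totally ramified of degree $n$.

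Choose a uniformizer $\pi\in\cO_L$, so that $\cO_L=K\tbrats[\pi]$ and $\pi$ satisfies an Eisenstein polynomial $f(x)\in K\tbrats[x]$ of degree $n$. Let $v$ denote the extension to $\overline{K\tpars}$ of the valuation on $K\tpars$ normalized by $v(t)=1$. Pick any lift $\beta\in\cO_E$ of $\eta(\pi)\in\cO_E/t^l\cO_E$; since $\eta$ is a $K\tbrats$-algebra homomorphism, we have $f(\beta)\equiv\eta(f(\pi))=0$ and $f'(\beta)\equiv\eta(f'(\pi))$ modulo $t^l\cO_E$. Applying the Newton polygon of $f$ to the congruence $f(\beta)\equiv 0\pmod{t^l}$ forces $v(\beta)=1/n$, and since $f'(\pi)$ generates the different $\fD_{L/K\tpars}$ with $v(f'(\pi))=\bd_{L/K\tpars}/n$, one deduces $v(f'(\beta))=\bd_{L/K\tpars}/n$ (using $\bd_{L/K\tpars}/n<l$ so this valuation is determined by the class modulo $t^l\cO_E$). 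The hypothesis $l>\bd_{L/K\tpars}$ gives $v(f(\beta))\ge l>\bd_{L/K\tpars}\ge 2\bd_{L/K\tpars}/n=2v(f'(\beta))$ for $n\ge 2$ (the case $n=1$ being trivial), so by the Newton--Hensel lemma there exists a unique root $\pi'\in\cO_E$ of $f$ with $v(\pi'-\beta)=v(f(\beta))-v(f'(\beta))$. The assignment $\pi\mapsto\pi'$ then defines the desired $K\tpars$-embedding $L=K\tpars[\pi]\hookrightarrow E$.

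The main technical point is verifying the equality $v(f'(\beta))=\bd_{L/K\tpars}/n$. Since $\eta$ is only a homomorphism into $\cO_E/t^l\cO_E$, one must show that it preserves valuations of elements of $\cO_L$ whose valuation is less than $l$; this rests on the computation $v(\eta(\pi))=1/n$ (via the Newton polygon, applied to the Eisenstein equation satisfied by $\eta(\pi)$ modulo $t^l$) together with the fact that $\eta$ sends units of $\cO_L$ to units of $\cO_E/t^l\cO_E$ because its reduction modulo the maximal ideal of $\cO_E$ is a field embedding. Once this is in hand, writing every $x\in\cO_L$ as $\pi^{v_L(x)}u$ with $u\in\cO_L^\times$ reduces valuation-preservation to these two inputs, and the estimate on $v(f'(\beta))$ follows.
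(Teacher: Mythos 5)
Your argument is correct in its main thrust but follows a genuinely different route from the paper. The paper (following Fontaine's original argument) writes $\cO_{L}=K\tbrats[\alpha]$, lifts $\eta(\alpha)$ to $\beta\in\cO_{E}$, and compares $\beta$ with the conjugates $\alpha_{1},\dots,\alpha_{n}$ of $\alpha$ inside a fixed algebraic closure: the estimate $\sup_{i}\ord(\beta-\alpha_{i})\ge l/n>\bd_{L}/n\ge\sup_{i\ne j}\ord(\alpha_{i}-\alpha_{j})$ lets one invoke Krasner's lemma to conclude $K\tpars(\alpha_{i})\subset K\tpars(\beta)\subset E$. You instead stay inside $E$ and produce an exact root of the minimal polynomial by the Newton--Hensel lemma, after checking $v(f(\beta))\ge l>\bd_{L}\ge 2\bd_{L}/n=2v(f'(\beta))$; this is legitimate because $\cO_{E}$, being the valuation ring of an algebraic extension of the Henselian field $K\tpars$, is Henselian even when $E$ is not complete. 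Your verification that $\eta$ preserves valuations below $l$ (via $v(\eta(\pi))=1/n$ from the Newton polygon and the fact that $\eta$ carries units to units) is the right substitute for the paper's conjugate-difference estimate, and the numerics check out. The Krasner route is slightly more economical in that it needs no case split and no reduction step; your route has the merit of producing the embedding constructively as $\pi\mapsto\pi'$ with an explicit bound $v(\pi'-\beta)=v(f(\beta))-v(f'(\beta))$.

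One step deserves more care: the reduction to the totally ramified case. After you lift the residue-field embedding to a $K\tpars$-embedding $\iota\colon L^{\mathrm{ur}}\hookrightarrow E$, the second half of your argument tacitly uses that $\eta$ is $\cO_{L^{\mathrm{ur}}}$-linear for the $\cO_{L^{\mathrm{ur}}}$-structure on $\cO_{E}/t^{l}\cO_{E}$ given by $\iota$; an arbitrary Hensel lift of the residue embedding only agrees with $\eta|_{\cO_{L^{\mathrm{ur}}}}$ modulo the maximal ideal, not modulo $t^{l}$. This is fixable: writing $\cO_{L^{\mathrm{ur}}}=K\tbrats[\theta]$ with $g(\theta)=0$ and $g'(\theta)$ a unit, apply Hensel to a lift $\beta_{0}$ of $\eta(\theta)$ (here $v(g(\beta_{0}))\ge l>0=2v(g'(\beta_{0}))$) to get $\theta'\equiv\beta_{0}\pmod{t^{l}}$, and define $\iota$ by $\theta\mapsto\theta'$; then $\iota\equiv\eta|_{\cO_{L^{\mathrm{ur}}}}\pmod{t^{l}}$ and the reduction goes through (note also $\bd_{L/L^{\mathrm{ur}}}\le\bd_{L/K\tpars}<l$). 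In the paper's actual application the base residue field is algebraically closed and $L$ is totally ramified, so this step is vacuous there, but since you state the lemma in the same generality as the paper you should include this compatibility argument.
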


\begin{proof}
The proof is also similar to the one of \cite[Proposition 1.5]{fontaine1985ilny}.
Let us write $\cO_{L}=K\llbracket t\rrbracket[\alpha]$ and let $f(X)$
be the minimal polynomial of $\alpha$ over $K\llparenthesis t\rrparenthesis$,
which is of degree $n=[L:K\llparenthesis t\rrparenthesis]$. We embed
$L$ and $E$ into an algebraic closure $\Omega$ of $K\llparenthesis t\rrparenthesis$
and denote the extension of the valuation $\ord$ to $\Omega$ again
by $\ord$. Let $\beta\in\cO_{E}$ be a lift of $\eta(\alpha)$. Then
$\ord\,f(\beta)\ge l$. Let $\alpha=\alpha_{1},,\dots,\alpha_{n}\in\Omega$
be the conjugates of $\alpha$. Since $f(\beta)=\prod_{i=1}^{n}(\beta-\alpha_{i})$,
we have
\begin{equation}
\sup_{i}\ord(\beta-\alpha_{i})\ge\frac{\ord(f(\beta))}{n}\ge\frac{l}{n}>\frac{\bd_{L}}{n}.\label{eq:ineq1}
\end{equation}
Recall that the discriminant of $L/K\llparenthesis t\rrparenthesis$
is the ideal generated by $\prod_{i\ne j}(\alpha_{j}-\alpha_{i})$.
Suppose that $\ord(\alpha_{1}-\alpha_{2})=\sup_{i\ne j}\ord(\alpha_{i}-\alpha_{j})$.
If $\sigma_{i}$, $1\le i\le n$, are $K\tpars$-automorphisms of
$\Omega$ with $\sigma_{i}(\alpha)=\alpha_{i}$. Then, 
\begin{equation}
\bd_{L}\ge\sum_{i\ne j}\ord(\alpha_{i}-\alpha_{j})\ge\sum_{i=1}^{n}\ord(\sigma_{i}(\alpha_{1})-\sigma_{i}(\alpha_{2}))\ge n\ord(\alpha_{1}-\alpha_{2}).\label{eq:ineq2}
\end{equation}
Combining (\ref{eq:ineq1}) and (\ref{eq:ineq2}) gives
\[
\sup_{i}\ord(\beta-\alpha_{i})>\sup_{i\ne j}\ord(\alpha_{i}-\alpha_{j}).
\]
From Krasner's lemma \cite[II,  S2,  Proposition3]{lang1994algebraic},
for $i$ with $\ord(\beta-\alpha_{i})=\sup_{i}\ord(\beta-\alpha_{i})$,
we have $K\llparenthesis t\rrparenthesis(\alpha_{i})\subset K\llparenthesis t\rrparenthesis(\beta)\subset E$.
The composite map 
\[
L\xrightarrow{\sim}K\llparenthesis t\rrparenthesis(\alpha_{i})\hookrightarrow K\llparenthesis t\rrparenthesis(\beta)\hookrightarrow E
\]
is a $K\tpars$-embedding.
\end{proof}
\begin{cor}
\label{cor:Fontaine-var}Let $L$ and $L'$ be finite separable field
extensions of $K\llparenthesis t\rrparenthesis$ of the same degree.
Suppose that for some $l>\bd_{L}$, there is a $K\llbracket t\rrbracket$-isomorphism
$\cO_{L}/t^{l}\cO_{L}\xrightarrow{\sim}\cO_{L'}/t^{l}\cO_{L'}$. Then
there exists a $K\llparenthesis t\rrparenthesis$-isomorphism $L\xrightarrow{\sim}L'$. 
\end{cor}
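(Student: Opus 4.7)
The plan is to deduce this directly from Lemma \ref{lem:Fontaine-variant} by taking $E = L'$. Given the hypothesized $K\tbrats$-isomorphism $\cO_L/t^l\cO_L \xrightarrow{\sim} \cO_{L'}/t^l\cO_{L'}$, composing with the canonical quotient $\cO_{L'} \to \cO_{L'}/t^l\cO_{L'}$ (or rather composing the reduction $\cO_L \to \cO_L/t^l\cO_L$ with the given isomorphism) yields a $K\tbrats$-algebra homomorphism $\eta \colon \cO_L \to \cO_{L'}/t^l\cO_{L'}$. Since $L'$ is itself a finite (hence algebraic) separable extension of $K\tpars$ and since by hypothesis $l > \bd_{L/K\tpars}$, the assumptions of Lemma \ref{lem:Fontaine-variant} are satisfied with $E = L'$.

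Applying the lemma, we obtain a $K\tpars$-embedding $\iota \colon L \hookrightarrow L'$. To conclude, I would observe that $\iota$ makes $L'$ into a finite extension of $\iota(L)$, and by multiplicativity of degrees,
\[
[L':K\tpars] = [L': \iota(L)] \cdot [\iota(L):K\tpars] = [L':\iota(L)] \cdot [L:K\tpars].
\]
Since $[L:K\tpars] = [L':K\tpars]$ by hypothesis, we conclude $[L':\iota(L)] = 1$, i.e.\ $\iota$ is surjective, and hence an isomorphism.

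There is essentially no obstacle here, since all the substantive work has been done in Lemma \ref{lem:Fontaine-variant}; the corollary is a formal extraction once one recognizes that the given isomorphism provides the required homomorphism $\eta$ and that equality of degrees upgrades an embedding to an isomorphism. The only small subtlety is verifying that the composition defining $\eta$ is indeed a $K\tbrats$-algebra homomorphism, which is immediate from the fact that the isomorphism in the hypothesis is one of $K\tbrats$-algebras.
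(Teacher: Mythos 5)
Your proposal is correct and follows essentially the same route as the paper: apply Lemma \ref{lem:Fontaine-variant} with $E=L'$, using the composite $\cO_L\to\cO_L/t^l\cO_L\xrightarrow{\sim}\cO_{L'}/t^l\cO_{L'}$ as the homomorphism $\eta$, and then upgrade the resulting embedding to an isomorphism by the equality of degrees. The paper's proof is just a terser version of the same two-step argument.
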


\begin{proof}
From the last proposition, there exists a $K\llparenthesis t\rrparenthesis$-embedding
$L\to L'$. Because of their degrees, it is an isomorphism. 
\end{proof}
\begin{cor}
\label{cor:same-P}For $l\ge m$, the two morphisms $\psi^{(m)},\,\psi_{l}^{(m)}\circ\pi_{l}\colon\Eis^{(m)}\to\Delta_{n}^{\circ}$
induce the same P-morphism.
\end{cor}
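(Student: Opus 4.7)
The plan is to exploit that $\Delta_n^{\circ}$ is a strong P-moduli space of $\cF_n^{\circ}$, so its $K$-points for $K$ algebraically closed classify connected degree-$n$ �tale $K\tpars$-algebras up to isomorphism; since a P-morphism is by definition a natural transformation on $\ACF/k$, two P-morphisms into $\Delta_n^{\circ}$ coincide as soon as they agree on every such geometric point. The substantive input will be Corollary~\ref{cor:Fontaine-var}, which upgrades an isomorphism of $l$-th truncations of rings of integers to a genuine isomorphism of extensions whenever $l$ exceeds the discriminant exponent.

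Concretely, I would fix an algebraically closed extension $K$ of $k$ and a point $y=(y_1,\dots,y_n)\in\Eis^{(m)}(K)$, and set $y':=\widetilde{\pi_l(y)}$. The two P-morphisms send $y$ respectively to the isomorphism classes of the $K\tpars$-algebras $A_y$ and $A_{y'}$, so the task reduces to producing a $K\tpars$-isomorphism $A_y\xrightarrow{\sim} A_{y'}$. Unwinding the definition of the canonical lift, $y'$ agrees with $y$ in every coefficient of $t^j$ for $j\le m$ and vanishes in higher degrees, so $y\equiv y'\pmod{t^{m+1}}$ in $K\tbrats^n$, and hence $f_y\equiv f_{y'}\pmod{t^{m+1}}$ in $K\tbrats[x]$.

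Since $f_y$ and $f_{y'}$ are Eisenstein of degree $n$, the standard computation recalled in Section~\ref{sec:Discriminants} identifies $\cO_{A_y}=K\tbrats[x]/(f_y)$ and $\cO_{A_{y'}}=K\tbrats[x]/(f_{y'})$, each a discrete valuation ring with uniformizer the class of $x$. The congruence $f_y\equiv f_{y'}\pmod{t^{m+1}}$ collapses the two defining ideals modulo $t^{m+1}$, yielding a canonical $K\tbrats$-algebra isomorphism
\[
\cO_{A_y}/t^{m+1}\cO_{A_y}\xrightarrow{\sim}\cO_{A_{y'}}/t^{m+1}\cO_{A_{y'}}
\]
sending the class of $x$ to the class of $x$. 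Because $m+1>m=\bd_{A_y/K\tpars}$, Corollary~\ref{cor:Fontaine-var} then furnishes a $K\tpars$-isomorphism $A_y\xrightarrow{\sim}A_{y'}$, completing the geometric-point comparison.

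I expect the entire argument to be essentially formal once Corollary~\ref{cor:Fontaine-var} is in hand; the only point of potential friction is the passage from \emph{agrees on all $\ACF/k$-points} to \emph{equal as P-morphisms}, but this is tautological from the definition of a P-morphism combined with the strong P-moduli property of $\Delta_n^{\circ}$.
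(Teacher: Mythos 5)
Your proof is correct and takes essentially the same route as the paper's: reduce to agreement on geometric points, observe that $y$ and its canonical truncated lift define the same quotient $\cO_{A_y}/t^{m+1}\cO_{A_y}=(K\llbracket t\rrbracket/(t^{m+1}))[x]/(f_y(x))$, and apply Corollary~\ref{cor:Fontaine-var} with $m+1>\bd_{A_y}$. The only difference is cosmetic: the paper phrases it for any two points of $\Eis^{(m)}(K)$ with the same image in $\Eis_l^{(m)}(K)$, while you specialize to the pair $(y,\widetilde{\pi_l(y)})$.
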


\begin{proof}
For a geometric point $y\in\Eis^{(m)}(K)$, the field extension $A_{y}/K\llparenthesis t\rrparenthesis$
has discriminant exponent $m$. Since 
\[
\cO_{A_{y}}/t^{l+1}\cO_{A_{y}}=(K\llbracket t\rrbracket/(t^{l+1}))[x]/(f_{y}(x)),
\]
if $y,y'\in\Eis^{(m)}(K)$ map to the same point of $\Eis_{l}^{(m)}(K)$
for $l\ge m$, then $A_{y}$ and $A_{y'}$ are isomorphic over $K\llparenthesis t\rrparenthesis$.
This proves the corollary.
\end{proof}
\begin{cor}
\label{cor:compact}The locally closed subset $\Delta_{n}^{(m)}\subset\Delta_{n}^{\circ}$
is quasi-compact and constructible.
\end{cor}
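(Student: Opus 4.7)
The plan is to exhibit $\Delta_n^{(m)}$ as the image on geometric points of a finite-type $k$-scheme under a P-morphism, and then read off constructibility from the structure of $\Delta_n^{\circ}$ as a coproduct of affine varieties. Fix any integer $l \ge m$. Since each coefficient $F_{m'}(y_{i,j})$ in the expansion of the discriminant polynomial depends only on $y_{i,j}$ with $j \le m'$, the conditions $y_{i,0}=0$, $y_{n,1}\ne 0$, $F_{m'}=0$ for $m'<m$, and $F_m\ne 0$ only involve $y_{i,j}$ with $j\le m$. Consequently $\Eis_l^{(m)}$ is a locally closed subscheme of the finite-type $k$-scheme $\J_l V=\AA_k^{n(l+1)}$, hence itself a $k$-scheme of finite type, and the P-morphism $\psi_l^{(m)}\colon \Eis_l^{(m)}\to\Delta_n^{\circ}$ factors through $\Delta_n^{(m)}$ by construction.

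Next I would verify that $\psi_l^{(m)}$ hits every geometric point of $\Delta_n^{(m)}$. A geometric point of $\Delta_n^{(m)}$ over an algebraically closed field $K$ corresponds, by the strong P-moduli property, to a connected finite étale $K\tpars$-algebra of degree $n$ and discriminant exponent $m$. Because the residue field of $K\tpars$ is $K$ itself, already algebraically closed, any such connected algebra must be a totally ramified field extension of $K\tpars$ and is therefore generated by a root of some Eisenstein polynomial $f_y$ with $y\in\Eis^{(m)}(K)$; projecting $y$ to $\Eis_l^{(m)}(K)$ supplies a preimage under $\psi_l^{(m)}$.

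Finally I would convert this surjectivity on geometric points into constructibility. By the lemma recalled at the beginning of Section~\ref{sec:Strong-P-moduli-spaces}, the P-morphism $\psi_l^{(m)}$ fits into a diagram with an auxiliary scheme $Z$ and morphisms $h\colon Z\to \Eis_l^{(m)}$ and $g\colon Z\to \Delta_n^{\circ}$ which can be taken geometrically bijective and of finite type. Then $Z$ is of finite type over $k$, hence quasi-compact, so writing $\Delta_n^{\circ}=\coprod_{i\in I}W_i$ as a coproduct of affine varieties, the image $g(|Z|)$ meets only finitely many $W_i$ and is constructible in the corresponding finite sub-coproduct by Chevalley's theorem. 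By Remark~\ref{rem:const-subsets} this image is therefore constructible in $\Delta_n^{\circ}$, and since it coincides with $\Delta_n^{(m)}$ on geometric points and $|\Delta_n^{\circ}|$ is defined as the set of equivalence classes of geometric points, the two locally constructible subsets agree. The most delicate ingredient I anticipate is precisely this last identification, which relies on $\Delta_n^{\circ}$ being a \emph{strong} P-moduli space so that geometric points faithfully capture isomorphism classes of connected étale covers.
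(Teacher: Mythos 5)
Your proposal is correct and follows essentially the same route as the paper: fix $l\ge m$, observe that $\Eis_l^{(m)}$ is a finite-type $k$-scheme, that $\psi_l^{(m)}$ hits every geometric point of $\Delta_n^{(m)}$ because every totally ramified extension of $K\tpars$ over an algebraically closed $K$ comes from an Eisenstein polynomial, and conclude that the image of a P-morphism represented by a finite-type scheme is quasi-compact and constructible. You merely make explicit the auxiliary scheme $Z$ and the appeal to Chevalley that the paper compresses into one sentence.
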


\begin{proof}
We first note that from Remark \ref{rem:const-subsets}, the assertion
is independent of the choice of the P-moduli space $\Delta_{n}^{\circ}$
of the functor $\cF_{n}^{\circ}$. For an algebraically closed field
$K$, every totally ramified extension $A/K\tpars$ is associated
to some Eisenstein polynomial. This shows that 
\[
\Image(\psi^{(m)})=\Image(\psi_{l}^{(m)})=\Delta_{n}^{(m)}.
\]
Since the P-morphism $\psi_{l}^{(m)}$ is represented by a morphism
$Y\to\Delta_{n}^{\circ}$ of $k$-schemes with $Y$ a $k$-variety,
its image is quasi-compact and constructible.
\end{proof}
\begin{defn}
Let $\GG_{m}$ denote the multiplicative group $\Spec k[s,s^{-1}]$
over the base field $k$. We define a grading on $k[x_{1},\dots,x_{n}]$
by $\deg(x_{i})=i$ which induces a $\GG_{m}$-action on $V$ and
one on $\J_{\infty}V$. 
\end{defn}

\begin{lem}
Maps $\psi^{(m)}$ and $\psi_{l}^{(m)}$ are $\GG_{m}$-invariant. 
\end{lem}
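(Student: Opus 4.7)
The plan is to show that for any geometric point $(s,y)$ the $K\tpars$-algebras $A_{y}$ and $A_{s\cdot y}$ are canonically isomorphic, from which $\mathbb{G}_m$-invariance of $\psi^{(m)}$ follows because P-morphisms are determined by their action on geometric points (and $\psi^{(m)}$ factors through $\cF_{n}^{(m)}(\Spec K)$, i.e., isomorphism classes).

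First I would unwind the $\mathbb{G}_m$-action concretely. Since $\deg(x_i)=i$, the induced action on $R$-points of $V$ is $s\cdot(y_1,\dots,y_n)=(sy_1,s^2y_2,\dots,s^ny_n)$, and hence on $\J_\infty V$ it sends $y_{i,j}\mapsto s^{i}y_{i,j}$. A short calculation then gives the key identity
\[
f_{s\cdot y}(x)=x^n+sy_1x^{n-1}+\cdots+s^ny_n=s^{n}f_{y}(x/s).
\]
Because $s$ is a unit in $R\tpars$ (it is invertible in $R$), the assignment $x\mapsto x/s$ defines a well-defined $R\tpars$-algebra homomorphism $A_{y}\to A_{s\cdot y}$, with inverse $x\mapsto sx$; hence $A_{y}\cong A_{s\cdot y}$ as $R\tpars$-algebras.

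For $\psi^{(m)}$, applied to a geometric point $y\in\Eis^{(m)}(K)$ (with $s\in K^\times$), the induced P-morphism sends $y$ to the isomorphism class of $A_{y}$ in $\cF_{n}^{(m)}(K)$, which by the above equals the class of $A_{s\cdot y}$. Therefore $\psi^{(m)}\circ\text{action}$ and $\psi^{(m)}\circ\pr_2$ agree on all geometric points of $\mathbb{G}_m\times\Eis^{(m)}$, and since a P-morphism is by definition determined by its restriction to $(\mathbf{ACF}/k)^{\mathrm{op}}$, the two P-morphisms coincide. For $\psi_{l}^{(m)}$, I would observe that the canonical lift operation $y\mapsto\widetilde{y}$ (truncating by zero beyond index $l$) is $\mathbb{G}_m$-equivariant—killing higher $t$-coefficients commutes with scaling each $y_{i,j}$ by $s^i$—so $\widetilde{s\cdot y}=s\cdot\widetilde{y}$, and the invariance of $\psi_{l}^{(m)}=\psi^{(m)}\circ(\,\widetilde{\ }\,)$ follows from that of $\psi^{(m)}$.

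There is no real obstacle: the argument is the standard observation that scaling the coefficients of a monic polynomial by $(s^i)$ is the same as substituting $x\mapsto x/s$ up to a global factor, together with the book-keeping that P-morphisms in the sense of Section~\ref{sec:Strong-P-moduli-spaces} are determined on $\mathbf{ACF}/k$.
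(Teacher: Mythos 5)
Your proposal is correct and follows essentially the same route as the paper: the paper's proof also observes that for $y'=b\cdot y$ the map $x\mapsto b^{-1}x$ gives an isomorphism $A_{y}\to A_{y'}$, hence the induced points of $\Delta_{n}^{(m)}$ agree. Your extra remarks on the equivariance of the canonical lift and on P-morphisms being determined on geometric points are correct bookkeeping that the paper leaves implicit.
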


\begin{proof}
Let $y=(y_{1},\dots,y_{n})\in\Eis^{(m)}(K)\subset K\llbracket t\rrbracket^{n}$
and $b\in K^{*}=\GG_{m}(K)$. Let 
\[
y':=by=(by_{1},b^{2}y_{2},\dots,b^{n}y_{n}).
\]
We have an isomorphism $A_{y}\to A_{y'},\,x\mapsto b^{-1}x$. This
shows that $\psi^{(m)}$ is $\GG_{m}$-invariant. Similarly for $\psi_{l}^{(m)}$.
\end{proof}
\begin{prop}
\label{prop:closed}Let $A\colon\Spec K\to\Delta_{n}^{(m)}$ be a
point with $K$ any field and let $l\ge m$. The fiber $\psi_{l}^{-1}(A)$
is a closed subset of $\Eis_{l}^{(m)}\otimes_{k}K$. 
\end{prop}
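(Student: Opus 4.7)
The plan is to recast membership in $\psi_l^{-1}(A)$ via Corollary \ref{cor:Fontaine-var} as the existence of an isomorphism between truncated \'etale algebras, and then verify that the resulting constructible locus is closed by the valuative criterion, propagating the generic-fiber witness to the special fiber via Lemma \ref{lem:Fontaine-variant}.

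Set $R_K := K\tbrats/(t^{l+1})$, $B := \cO_A/t^{l+1}\cO_A$, and $S_y := (K'\tbrats/(t^{l+1}))[x]/(f_y)$ for $y \in \Eis_l^{(m)}(K')$ with $K' \supset K$ a field. The identification $S_y = \cO_{A_{\tilde y}}/t^{l+1}\cO_{A_{\tilde y}}$ is independent of the canonical lift, and the hypothesis $l \geq m = \bd_{A_{\tilde y}/K'\tpars}$ gives $l+1 > m$, so that Corollary \ref{cor:Fontaine-var} reformulates the fiber membership as
\[
y \in \psi_l^{-1}(A) \iff S_y \cong B \otimes_{R_K} R_{K'} \ \text{as } R_{K'}\text{-algebras.}
\]
Since $\psi_l^{-1}(A)$ is constructible as the fiber of a P-morphism between schemes of finite type, to establish closedness in $\Eis_l^{(m)} \otimes_k K$ it suffices to verify stability under specialization. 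Let $R'$ be a complete DVR over $K$ with fraction field $F$, residue field $\kappa$, and let $\gamma \in (\Eis_l^{(m)} \otimes_k K)(R')$ satisfy $\gamma_F \in \psi_l^{-1}(A)$. The generic-fiber isomorphism corresponds to a uniformizer $z_F \in \cO_{A_{\tilde\gamma_F}}$ whose $F\tbrats$-minimal polynomial coincides with the minimal polynomial $g \in K\tbrats[x]$ of a fixed uniformizer $\varpi \in \cO_A$.

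For the special fiber, I approximate $z_F$ by an element $\bar z \in \cO_{A_{\tilde\gamma}}/t^{l'+1}\cO_{A_{\tilde\gamma}}$ (for an $l' \gg m$) with $R'$-integral coefficients whose reduction modulo $\pi_{R'}$ is a root of $g$ to sufficiently high precision in the maximal ideal of $\cO_{A_{\tilde\gamma_\kappa}}$. Applying Lemma \ref{lem:Fontaine-variant} with this approximate root produces a $\kappa\tpars$-embedding $A \otimes_K \kappa \hookrightarrow A_{\tilde\gamma_\kappa}$, necessarily an isomorphism by equality of degrees, giving $\gamma_\kappa \in \psi_l^{-1}(A)$. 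The main obstacle is producing the $R'$-integral approximation: the coefficients of $z_F$ over $F$ have a priori unbounded $\pi_{R'}$-denominators, and naively clearing denominators destroys the minimal polynomial relation. A Hensel- or Newton-style iteration tailored to Eisenstein polynomials, using the discriminant exponent $m$ to control the derivative $g'(z_F)$, should overcome this difficulty at the cost of choosing $l'$ suitably large depending on $m$ and the DVR $R'$.
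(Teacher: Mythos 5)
Your reduction is fine as far as it goes: $\psi_l^{-1}(A)$ is constructible by the lemma on graphs of P-morphisms, membership in the fiber is correctly reformulated via Corollary \ref{cor:Fontaine-var} as the existence of an isomorphism of truncated rings, and a constructible set is closed iff it is stable under specialization, which may be tested along complete DVRs. The problem is that the entire content of the proposition is concentrated in the step you defer: producing an $R'$-integral approximate root of $g$ in $\cO_{A_{\tilde\gamma}}$ from the generic-fiber root $z_F$. A Hensel/Newton iteration for $g$ needs a \emph{seed} $z_0$ that is already integral and satisfies $\ord g(z_0)>2\ord g'(z_0)$; the only candidate seed is the reduction of $z_F$ (or of one of its conjugates), and nothing you have said prevents every root of $g$ in $\cO_{A_{\tilde\gamma_F}}$ from having genuine $\pi_{R'}$-denominators when expressed in the basis $1,\xi,\dots,\xi^{n-1}$. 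That possible non-integrality is exactly the mechanism by which the isomorphism class could jump at the special fiber, so the gap is not a technical loose end but is essentially equivalent to the statement being proved. The paper's own remark following the lemma on $A_y$ being \'etale (the example $f=x^{2}+(st+(s+1)t^{2})x+t$ in characteristic $2$) shows concretely that fiberwise properties over the two-dimensional base $R'\tbrats$ do not spread out naively, which is why one should be suspicious of any argument that treats the family $A_{\tilde\gamma}/R'\tpars$ as if it behaved like a single local field.

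The paper's proof avoids this issue by an entirely different mechanism. It restricts to a curve $Z$ through the special point, builds (after a careful normalization argument over the excellent Dedekind domain $R\tpars$) a $G$-torsor $\Spec B_y\to\Spec R\tpars$ whose fibers are powers of Galois closures of the $A_{y,z}$, and then invokes the moduli stack $\widetilde{\Delta}_G$ of $G$-torsors over $\Spec k\tpars$ from \cite{tonini2020moduliof}: the corresponding morphism $Z\to\cX_n$ has finite image on $k$-points, hence is constant on the irreducible curve $Z$, forcing $A_{y,0}\cong A$. The rigidity input (finiteness properties of the moduli of $G$-torsors) is what replaces the integral approximation you are missing. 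If you want to salvage your route, you would need to prove an integrality or properness statement for the scheme of roots of $g$ in $\cO_{A_{\tilde\gamma}}/t^{l'+1}$ over $R'$; as written, ``a Newton-style iteration should overcome this difficulty'' is not a proof, and I do not see how to supply one without an input of comparable strength to the paper's torsor argument.
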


\begin{proof}
Since the proof is a little long and technical, we divide it into
several steps.

\textbf{A setup.} By base change, we may assume that $K=k$ and that
they are algebraically closed. Then, the point $A\in\Delta_{n}^{(m)}(k)$
is identified with an étale $k\tpars$-algebra of degree $n$. Let
$Z=\Spec R$ be an affine smooth irreducible curve over $k$ with
a distinguished closed point $0\in\Spec R$ and let $y\colon Z\to\Eis_{l}^{(m)}$
be a $k$-morphism such that an open dense subset $Z^{\circ}\subset Z$
map into $\psi_{l}^{-1}(A)$. We will show that the distinguished
point $0$ also maps into $\psi_{l}^{-1}(A)$, which implies the proposition.
Let $A_{y}=R\tpars[x]/(f_{y}(x))$ be the algebra corresponding to
$y$. Each point $z\in Z(k)$ induces a homomorphism $R\tpars\to k\tpars$,
which in turn induces an algebra $A_{y,z}/k\tpars$. For $z\in Z^{\circ}(k)$,
we have $A_{y,z}\cong A$. Our goal is to show that $A_{y,0}\cong A$.

\textbf{Strategy.} Our strategy to achieve this goal is as follows.
For a group $G$ of a special form, we construct some $G$-torsor
$\Spec B_{y}\to\Spec R\tpars$ which factors through $\Spec A_{y}$.
We show that for each point $z\in Z(k)$, the ``fiber'' $B_{y,z}$
is isomorphic to $(\widetilde{A_{y,z}})^{v}$ for some $v\in\ZZ_{>0}$,
where $\widetilde{A_{y,z}}$ denotes a Galois closure of $A_{y,z}/k\tpars$.
For such a group $G$, we have a fine moduli stack $\widetilde{\Delta}_{G}$
of $G$-torsors over $\Spec k\tpars$ and have the morphism $Z\to\widetilde{\Delta}_{G}$
corresponding to the above $G$-torsor. We see that the image of the
map $Z(k)\to\widetilde{\Delta}_{G}(k)/{\cong}$ is finite and hence
the map is constant. This implies the desired conclusion. 

\textbf{Construction of $B_{y}$.} We begin to construct a $G$-torsor
as above. Let $C_{y}/R\tpars$ and $C_{y,0}/k\tpars$ be the $S_{n}$-torsors
corresponding to the degree-$n$ étale algebras $A_{y}/R\tpars$ and
$A_{y,0}/k\tpars$, respectively. If we identify $S_{n-1}$ with the
stabilizer of $1$ for the action $S_{n}\curvearrowright\{1,\dots,n\}$,
then $(C_{y})^{S_{n-1}}=A_{y}$ and $(C_{y,0})^{S_{n-1}}=A_{y,0}$.
Let $\Spec B_{y,0}'\subset\Spec C_{y,0}$ be a connected component
and let $G\subset S_{n}$ be its stabilizer so that $\Spec B_{y,0}'\to\Spec k\tpars$
is a $G$-torsor and $B_{y,0}'/k\tpars$ is a Galois closure of $A_{y,0}/k\tpars$.
Since the residue field $k$ of $k\tpars$ is algebraically closed,
$G$ coincides with its inertia group (often denoted by $G_{0}$).
From \cite[p. 68, Corollary 4]{serre1979localfields}, $G=G_{0}$
is the semidirect product $H\rtimes C$ of a $p$-group $H$ and a
cyclic group $C$ of order coprime to the characteristic of $k$.
If $P$ denotes the stabilizer of 1 for the action $G\curvearrowright\{1,\dots,n\}$,
then $A_{y,0}=(B_{y,0}')^{P}$. 

Let $u\in A_{y}$ be the image of $x$ by the map
\[
R\tpars[x]\to A_{y}=R\tpars[x]/(f_{y}(x))
\]
and let $\overline{u}$ be the image of $u$ in $A_{y,0}$. Then,
$A_{y,0}=k\tpars[\overline{u}]$ and $B_{y,0}'=k\tpars[g\overline{u}\mid g\in G]$.
Let $B'_{y}:=R\tpars[gu\mid g\in G]\subset C_{y}$. Then $B_{y}'$
is a finitely generated torsion-free $R\tpars$-module. Note that
$R\tbrats$ is a Noetherian domain of dimension 2. Moreover, $R\tbrats$
is excellent \cite{valabrega1975onthe} and \cite[Theorem 19.5]{matsumura1987commutative}.
It follows that the localization $R\tpars=R\tbrats_{t}$ is an excellent
Dedekind domain. Since $B_{y}'$ is a torsion-free $R\tpars$-module
and $B_{y,0}'$ is a free $k\tpars$-module of rank $|G|$, $B_{y}'$
is a locally free $R\tpars$-module of rank $|G|$. Moreover, the
subring $B_{y}'\subset C_{y}$ is stable under the $G$-action. 

From \cite[Scholie 7.8.3]{grothendieck1965elements}, the ring $B_{y}'$
is excellent. Therefore, the normalization $B_{y}$ of $B_{y}'$ is
finitely generated as a $B_{y}'$-module as well as an $R\tpars$-module.
By a similar reasoning as above, $B_{y}$ is a locally free $R\tpars$-module
of rank $|G|$. We also see that $B_{y}$ is a locally free $A_{y}$-module
of rank $|G'|=|G|/n$. We have natural morphisms of Dedekind schemes
\begin{equation}
\Spec C_{y}\xrightarrow{\gamma}\Spec B_{y}\xrightarrow{\beta}\Spec A_{y}\xrightarrow{\alpha}\Spec R\tpars.\label{eq:covers}
\end{equation}
Morphisms $\alpha,\beta,\gamma$ as well as their compositions are
flat. The morphism $\alpha\circ\beta$ is $G$-invariant and the morphism
$\beta$ is $G'$-invariant. We know that $\alpha$ and $\alpha\circ\beta\circ\gamma$
are étale. From \cite[Proposition 17.7.7]{grothendieck1967elements},
$\alpha\circ\beta$ is étale. From \cite[Proposition 17.3.4]{grothendieck1967elements},
$\beta$ is also étale. 

\textbf{Proving that $\Spec B_{y}\to\Spec R\tpars$ is a $G$-torsor.}
We now claim that the $G$-action on $\Spec B_{y}$ is free. Each
element $g\in G\setminus\{1\}$ acts nontrivially on $B_{y,0}'$ and
hence also on $B_{y}'$ and $B_{y}$. If the $G$-action on $\Spec B_{y}$
is not free, then for some $g\in G\setminus\{1\}$, the quotient morphism
$\Spec B_{y}\to(\Spec B_{y})/\langle g\rangle$ is ramified. But,
this is impossible from \cite[Proposition 17.3.3(v)]{grothendieck1967elements}
and the fact that the morphism $\alpha\circ\beta\colon\Spec B_{y}\to\Spec R$
is étale and factors through $(\Spec B_{y})/\langle g\rangle$. This
shows the claim. We have showed that $\Spec B_{y}\to\Spec R\tpars$
is an étale finite morphism of degree $|G|$ and $G$-invariant for
a free $G$-action on $\Spec B_{y}$. We conclude that this is a $G$-torsor.
By the same argument, we also see that $\Spec B_{y}\to\Spec A_{y}$
is a $G'$-torsor. In summary, we have showed that some compositions
of morphisms in (\ref{eq:covers}) are torsors for groups displayed
in the following diagram:

\medskip{}
\[
\xymatrix{\Spec C_{y}\ar[r]_{\gamma}\ar@/^{2pc}/[rrr]^{S_{n}} & \Spec B_{y}\ar[r]_{\beta}^{G'}\ar@/_{2pc}/[rr]^{G} & \Spec A_{y}\ar[r]_{\alpha} & \Spec R\tpars}
\]
\medskip{}

\textbf{``Fibers'' of $B_{y}$ are powers of Galois closures.} For
a point $z\in Z(k)$, base changing (\ref{eq:covers}) with $\Spec k\tpars\to\Spec R\tpars$,
we get a sequence of finite étale morphisms
\[
\Spec C_{y,z}\xrightarrow{\gamma_{z}}\Spec B_{y,z}\xrightarrow{\beta_{z}}\Spec A_{y,z}\xrightarrow{\alpha_{z}}\Spec k\tpars.
\]
If $\widetilde{A_{y,z}}/k\tpars$ denotes a Galois closure of $A_{y,z}/k\tpars$,
then $C_{y,z}\cong(\widetilde{A_{y,z}})^{u}$ for some $u\in\ZZ_{>0}$.
Since $\Spec B_{y,z}\to\Spec k\tpars$ is a $G$-torsor, we can write
$B_{y,z}\cong D^{v}$ for some Galois extension $D/k\tpars$. Then,
$D$ is an intermediate field of $\widetilde{A_{y,z}}/A_{y,z}$ such
that $D/k\tpars$ is Galois, which shows that $D=\widetilde{A_{y,z}}$
and $B_{y,z}\cong(\widetilde{A_{y,z}})^{v}$.

\textbf{Completing the proof by using a moduli stack.} Since $G$
is the semidirect product $H\rtimes C$ of a $p$-group $H$ and a
tame cyclic group $C$, we can use the moduli stack $\widetilde{\Delta}_{G}$
of $G$-torsors over $\Spec k\tpars$ constructed in \cite{tonini2020moduliof}
(denoted by $\Delta_{G}$ in the cited paper). The stack $\widetilde{\Delta}_{G}$
is written as the inductive limit of Deligne--Mumford stacks $\cX_{n}$,
$n\in\ZZ_{\ge0}$ of finite type, where transition morphisms $\cX_{n}\to\cX_{n+1}$
are representable and universally injective. In particular, we have
\begin{align*}
\{G\text{-torsors over \ensuremath{\Spec k\tpars}}\}/{\cong} & =\Delta_{G}(k)/{\cong}\\
 & =\bigcup_{n}\cX_{n}(k)/{\cong}.
\end{align*}
Since $Z$ is quasi-compact, the morphism $Z\to\widetilde{\Delta}_{G}$
corresponding to the $G$-torsor $\Spec B_{y}\to\Spec R\tpars$ factors
through a morphism $Z\to\cX_{n}$ for some $n$. Recall that for $z\in Z^{\circ}(k)$,
we have an $k\tpars$-isomorphism $A_{y,z}\cong A$. Therefore, for
$z\in Z^{\circ}(k)$, $\widetilde{A_{y,z}}\cong\widetilde{A},$ where
$\widetilde{A}$ is a Galois closure of $A/k\tpars$. It follows that
for $z\in Z^{\circ}(k)$, $B_{y,z}\cong(\widetilde{A})^{v}$ as a
$k\tpars$-algebra. There are at most finitely many $G$-torsor structures
which can be given to the morphism $\Spec(\widetilde{A})^{v}\to\Spec k\tpars$.
It follows that the image of the map $Z^{\circ}(k)\to\cX_{n}(k)/{\cong}$
is a finite set. Since $(Z\setminus Z^{\circ})(k)$ is a finite set,
the image of the map $Z(k)\to\cX_{n}(k)/{\cong}$ is also finite.
Since $Z$ is irreducible, we conclude that the map $Z(k)\to\cX_{n}(k)/{\cong}$
is constant. Hence, the $G$-torsors $B_{y,z}/k\tpars$, $z\in Z(k)$
are isomorphic to one another. It follows that étale $k\tpars$-algebras
$A_{y,z}=(B_{y,z})^{G'}$, $z\in Z(k)$ are isomorphic to one another,
and hence all of them are isomorphic to $A$. In particular, $A_{y,0}\cong A$
as desired.
\end{proof}

\section{The space of uniformizers}

Let $A/K\tpars$ be a totally ramified extension of degree $n$ and
discriminant exponent $m$ with a chosen uniformizer $\varpi$. We
construct a map associating an Eisenstein polynomial to an arbitrary
uniformizer of $A$. To do so, we introduce some more notation. Let
$\widetilde{A}$ be a Galois closure of $A/K\llparenthesis t\rrparenthesis$
with Galois group $G$. We define groups
\begin{align*}
E & :=\{g\in G|\forall a\in A,\,g(a)=a\},\\
\widetilde{H} & :=\{g\in G\mid g(A)=A\},\\
H & :=\widetilde{H}/E=\Aut(A/K\tpars).
\end{align*}
Let $\sigma_{1},\dots,\sigma_{n}\colon A\hookrightarrow\tilde{A}$
be the $K\llparenthesis t\rrparenthesis$-embeddings. Let $s_{i}(u_{1},\dots,u_{n})$,
$i=1,\dots,n$, be elementary symmetric polynomials of $n$ variables
with degree $i$ and put $s_{0}=1$ by convention. 

With the above notation, to a uniformizer $\varpi'\in A$, we associate
the polynomial 
\[
f_{\varpi'}(x):=\sum_{i=0}^{n}(-1)^{i}s_{i}(\sigma_{1}(\varpi'),\dots,\sigma_{n}(\varpi'))x^{n-i}\in K\llbracket t\rrbracket[x].
\]
Note that the coefficients $(-1)^{i}s_{i}(\sigma_{1}(\varpi'),\dots,\sigma_{n}(\varpi'))$
are elements of $K\tbrats$, since they are invariant under the $G$-action.
If we denote the unique extension of the valuation $\ord$ to $A$
again by $\ord$, then
\begin{align*}
\ord s_{n}(\sigma_{1}(\varpi'),\dots,\sigma_{n}(\varpi')) & =\ord\prod_{i=1}^{n}\sigma_{i}(\varpi')=\sum_{i=1}^{n}\ord\sigma_{i}(\varpi')=n\cdot\frac{1}{n}=1.
\end{align*}
We also see that $s_{i}(\sigma_{1}(\varpi'),\dots,\sigma_{n}(\varpi'))$
has positive order for every $i>0$. There exists a $K\tpars$-isomorphism
\[
K\llparenthesis t\rrparenthesis[x]/(f_{\varpi'})\to A,\,x\mapsto\varpi'.
\]
Thus, we have obtained the map 
\[
\varpi'\mapsto f_{\varpi'}(x)
\]
sending a uniformizer to an Eisenstein polynomial. Conversely, if
an Eisenstein polynomial $f(x)\in K\tbrats[x]$ defines an extension
$K\tpars[x]/(f(x))$ admitting a $K\tpars$-isomorphism $\rho\colon K\tpars[x]/(f(x))\xrightarrow{\sim}A$,
then $f(x)$ is recovered as $f_{\rho(x)}$. Namely, the map
\begin{equation}
\{\text{uniformizers of \ensuremath{A}}\}\to\{y\in\Eis^{(m)}(K)\mid A_{y}\cong A\},\,\varpi'\mapsto f_{\varpi'}\label{eq:surj}
\end{equation}
is surjective.

Next we realize this map as a map of arc spaces as follows. Let 
\[
W:=\Spec K\llbracket t\rrbracket[w_{0},\dots,w_{n-1}]=\AA_{K\tbrats}^{n}
\]
and let $\fO_{A}:=\J_{\infty}W$. For an extension $L/K$, we can
identify $\fO_{A}(L)$ with the integer ring $\cO_{A_{L}}$ of $A_{L}:=A\otimes_{K\tpars}L\tpars$
by the bijection 
\begin{align*}
(\J_{\infty}W)(L) & \to\cO_{A_{L}},\\
\gamma=(\gamma_{0},\dots,\gamma_{n-1}) & \mapsto\varpi_{\gamma}:=\sum_{a=0}^{n-1}\gamma_{a}\varpi^{a}.
\end{align*}
Let $\Ut_{A},\Uf_{A},\fM_{A}\subset\fO_{A}$ be the locally closed
subschemes corresponding to the groups of units, the sets of uniformizers
and the maximal ideals of $\cO_{A_{L}}$ by the above bijection. For
each $l\in\ZZ_{\ge0}$, let $\fO_{A,l}:=\J_{l}W$, the $l$-jet scheme,
which corresponds to $\cO_{A}/t^{l+1}\cO_{A}$, and let $\pi_{l}\colon\fO_{A}\to\fO_{A,l}$
be the truncation map. We put 
\[
\Ut_{A,l}:=\pi_{l}(\Ut_{A})\text{ and }\Uf_{A,l}:=\pi_{l}(\Uf_{A}).
\]

Using the fixed uniformizer $\varpi\in A$, for each $1\le i\le n$,
we define the polynomial
\[
S_{i}(w_{0},\dots,w_{n-1}):=s_{i}\left(\sum_{a=0}^{n-1}w_{a}\sigma_{1}(\varpi^{a}),\dots,\sum_{a=0}^{n-1}w_{a}\sigma_{n}(\varpi^{a})\right)\in\cO_{\tilde{A}}[w_{0},\dots,w_{n-1}].
\]
This is invariant by the $G$-action, hence in fact belongs to $K\llbracket t\rrbracket[w_{0},\dots,w_{n-1}]$. 
\begin{defn}
We define a $K\llbracket t\rrbracket$-morphism 
\[
\phi\colon W=\Spec K\tbrats[w_{0},\dots,w_{n-1}]\to V_{K}=\Spec K\tbrats[x_{1},\dots,x_{n}]
\]
by $\phi^{*}(x_{i})=(-1)^{i}S_{i}$. We write the associated maps
of arc spaces and jet schemes as 
\[
\phi_{l}\colon\J_{l}W=\fO_{A,l}\to\J_{l}V_{K}\quad(0\le l\le\infty).
\]
\end{defn}

We define gradings on $K\llbracket t\rrbracket[w_{0},\dots,w_{n-1}]$
and $K\llbracket t\rrbracket[x_{1},\dots,x_{n}]$ by 
\[
\deg w_{i}=1\text{ and }\deg x_{i}=i,
\]
respectively. With these gradings, the map $\phi^{*}$ of coordinate
rings is degree-preserving and $\phi$ is equivariant for the corresponding
actions of $\GG_{m,K}=\GG_{m}\otimes_{k}K$. It follows that $\phi_{l}$,
$0\le l\le\infty$ are also equivariant for the induced actions of
$\GG_{m,K}$ on jet/arc spaces. 
\begin{lem}
\label{lem:image of Uni}Let $L/K$ be a field extension. The map
$\phi_{\infty}$ sends $\gamma\in\Uf_{A}(L)$ to the point of $\Eis^{(m)}(L)$
corresponding to $f_{\varpi_{\gamma}}$. In particular, we have $\phi_{\infty}(\Uf_{A})=\psi^{-1}(A)$
and $\phi_{l}(\Uf_{A,l})=\psi_{l}^{-1}(A)$ for $l\ge m$. 
\end{lem}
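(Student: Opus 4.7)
The plan is to unwind the definition of $\phi_{\infty}$ on $L$-points and match it against the construction $\varpi'\mapsto f_{\varpi'}$ from the preceding discussion, then descend to finite level using Corollary \ref{cor:same-P}.

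For the first assertion, let $\gamma=(\gamma_{0},\dots,\gamma_{n-1})\in\Uf_{A}(L)\subset L\tbrats^{n}$. By the definition $\phi^{*}(x_{i})=(-1)^{i}S_{i}$, the point $\phi_{\infty}(\gamma)$ is the tuple of Laurent series $\left((-1)^{i}S_{i}(\gamma)\right)_{i=1}^{n}$. Using $K\tbrats$-linearity of each embedding $\sigma_{j}$, I get $\sigma_{j}(\varpi_{\gamma})=\sum_{a=0}^{n-1}\gamma_{a}\sigma_{j}(\varpi^{a})$, and therefore $S_{i}(\gamma)=s_{i}(\sigma_{1}(\varpi_{\gamma}),\dots,\sigma_{n}(\varpi_{\gamma}))$; comparing with the definition of $f_{\varpi_{\gamma}}$ shows that $\phi_{\infty}(\gamma)$ is precisely the point corresponding to $f_{\varpi_{\gamma}}$, which is the first claim. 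Because $\varpi_{\gamma}$ is a uniformizer of $A_{L}$, the discussion preceding the lemma guarantees that $f_{\varpi_{\gamma}}\in\Eis^{(m)}(L)$ and that $x\mapsto\varpi_{\gamma}$ gives an $L\tpars$-isomorphism $L\tpars[x]/(f_{\varpi_{\gamma}})\cong A_{L}$; hence $\phi_{\infty}(\gamma)\in\psi^{-1}(A)(L)$.

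For the reverse inclusion, take $y\in\psi^{-1}(A)(L)$ and fix an $L\tpars$-isomorphism $\rho\colon A_{y}\xrightarrow{\sim}A_{L}$. Let $\varpi'\in A_{L}$ be the image under $\rho$ of the class of $x$ in $A_{y}$, which is a uniformizer since $f_{y}$ is Eisenstein. Expanding $\varpi'=\sum_{a=0}^{n-1}\gamma_{a}\varpi^{a}$ in the $L\tbrats$-basis $\{\varpi^{a}\}$ of $\cO_{A_{L}}$ yields $\gamma\in\Uf_{A}(L)$ with $\varpi_{\gamma}=\varpi'$, and the recovery property $y=f_{\varpi'}$ noted around (\ref{eq:surj}) combined with the first step gives $y=f_{\varpi_{\gamma}}=\phi_{\infty}(\gamma)$. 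This proves $\phi_{\infty}(\Uf_{A})=\psi^{-1}(A)$.

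The statement for $\phi_{l}$ is then formal. Since $\phi$ is a $K\tbrats$-morphism, truncation commutes with it: $\phi_{l}\circ\pi_{l}=\pi_{l}\circ\phi_{\infty}$. Combined with $\Uf_{A,l}=\pi_{l}(\Uf_{A})$ this gives $\phi_{l}(\Uf_{A,l})=\pi_{l}(\psi^{-1}(A))$. By Corollary \ref{cor:same-P}, for $l\ge m$ the morphisms $\psi^{(m)}$ and $\psi_{l}^{(m)}\circ\pi_{l}$ induce the same P-morphism, so their fibers agree: $\psi^{-1}(A)=\pi_{l}^{-1}(\psi_{l}^{-1}(A))$ inside $\Eis^{(m)}$. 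Since $\pi_{l}\colon\Eis^{(m)}\to\Eis_{l}^{(m)}$ is surjective, applying it yields $\pi_{l}(\psi^{-1}(A))=\psi_{l}^{-1}(A)$, completing the proof. The whole argument is essentially bookkeeping; the only mildly subtle step is this descent from $\phi_{\infty}$ to $\phi_{l}$, which depends crucially on Corollary \ref{cor:same-P} to identify the fibers on either side.
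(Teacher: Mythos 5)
Your proposal is correct and follows essentially the same route as the paper: the first assertion is a direct unwinding of the definition of $\phi^{*}(x_{i})=(-1)^{i}S_{i}$ against the construction of $f_{\varpi'}$, the equality $\phi_{\infty}(\Uf_{A})=\psi^{-1}(A)$ comes from the (base change to $L$ of the) surjection (\ref{eq:surj}), and the finite-level statement follows from $\phi_{l}\circ\pi_{l}=\pi_{l}\circ\phi_{\infty}$ together with Corollary \ref{cor:same-P}. You merely fill in details the paper leaves as ``follows from construction,'' so there is nothing to correct.
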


\begin{proof}
The first assertion follows from construction. For a field extension
$L/K$, let $A_{L}:=A\otimes_{K\tpars}L\tpars$. As a base change
of map (\ref{eq:surj}), we get a surjection
\[
\{\text{uniformizers of \ensuremath{A_{L}} }\}\to\{y\in\Eis^{(m)}(L)\mid A_{y}\cong A_{L}\},\,\varpi'\mapsto f_{\varpi'}.
\]
This shows $\phi_{\infty}(\Uf_{A})=\psi^{-1}(A)$. We get
\[
\phi_{l}(\Uf_{A,l})=\phi_{l}(\pi_{l}(\Uf_{A}))=\pi_{l}(\phi_{\infty}(\Uf_{A}))=\pi_{l}(\psi^{-1}(A))=\psi_{l}^{-1}(A),
\]
where the last equality follows from Corollary \ref{cor:same-P}.
\end{proof}
Let $\sigma\in H=\Aut(A/K\tpars)$ and for each $0\le a<n$, let us
write 
\[
\sigma(\varpi^{a})=\sum_{b=0}^{n-1}c_{ab}^{\sigma}\varpi^{b}\quad(c_{ab}^{\sigma}\in K\tbrats).
\]
Let $\sigma$ act on $K\llbracket t\rrbracket[w_{0},\dots,w_{n-1}]$
by $w_{b}\mapsto\sum_{a=0}^{n-1}c_{ab}^{\sigma}w_{a}$. This gives
an $H$-action on $W$.
\begin{lem}
\label{lem:actions commute}This $H$-action on $W$ commutes with
the $\GG_{m,K}$-action and the morphism $\phi$ is $H$-invariant. 
\end{lem}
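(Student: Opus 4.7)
The plan is to verify both claims by direct computation: commutativity follows formally because the $\GG_{m,K}$-action is scalar while the $H$-action is $K\tbrats$-linear in the $w_i$, and $H$-invariance of $\phi$ reduces to the symmetry of elementary symmetric polynomials under permutations induced by post-composition on the set of embeddings $\{\sigma_1,\dots,\sigma_n\}$.

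For commutativity, first I would note that since every $w_i$ has degree $1$ in the grading defining the $\GG_{m,K}$-action on $W$, an element $b\in\GG_{m,K}(L)$ acts as the uniform scalar $w_i\mapsto b\cdot w_i$. On the other hand, the action of any $\sigma\in H$ sends each $w_b$ to the $K\tbrats$-linear combination $\sum_a c_{ab}^{\sigma}w_a$. Applying $\GG_{m,K}$ before or after $\sigma$ gives the same result $b\sum_a c_{ab}^{\sigma}w_a$, since scalar multiplication by $b$ commutes with any $K\tbrats$-linear endomorphism. Hence the two actions commute on the coordinate ring, and therefore on $W$.

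For $H$-invariance of $\phi$, the key observation is that for any $\sigma\in H=\Aut(A/K\tpars)$, the map $\sigma_i\mapsto \sigma_i\circ\sigma$ is a permutation of $\{\sigma_1,\dots,\sigma_n\}$: each $\sigma_i\circ\sigma$ is again a $K\tpars$-embedding $A\hookrightarrow\widetilde{A}$, and since $\sigma$ is an isomorphism the assignment is injective on a set of cardinality $n$. I would then translate to geometric points. Unwinding the definition of the $H$-action, an $L$-point $\gamma=(\gamma_0,\dots,\gamma_{n-1})$ of $W$ with associated element $\varpi_\gamma=\sum_a\gamma_a\varpi^a\in\cO_{A_L}$ is sent by $\sigma$ to the point whose associated element is precisely $\sigma(\varpi_\gamma)$ (this is a one-line computation using $\sigma(\varpi^a)=\sum_b c^{\sigma}_{ab}\varpi^b$). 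Therefore the $i$-th coordinate of $\phi(\sigma\cdot\gamma)$ equals
\[
(-1)^i s_i\bigl(\sigma_1(\sigma(\varpi_\gamma)),\dots,\sigma_n(\sigma(\varpi_\gamma))\bigr)=(-1)^i s_i\bigl((\sigma_1\circ\sigma)(\varpi_\gamma),\dots,(\sigma_n\circ\sigma)(\varpi_\gamma)\bigr),
\]
which coincides with the $i$-th coordinate of $\phi(\gamma)$ by the symmetry of $s_i$ and the permutation property above. Since this identity holds functorially in $L$, we conclude that $\phi\circ\sigma=\phi$ as morphisms of $K\tbrats$-schemes.

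Both steps are essentially formal; the only point requiring attention is matching the ring-level definition of the $H$-action $w_b\mapsto\sum_a c_{ab}^{\sigma}w_a$ with its effect $\varpi_\gamma\mapsto\sigma(\varpi_\gamma)$ on $\cO_{A_L}$-valued points, which is what licenses the permutation-of-embeddings argument. I do not anticipate a genuine obstacle.
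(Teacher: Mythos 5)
Your proof is correct and follows essentially the same route as the paper: commutativity comes from the $H$-action being $K\llbracket t\rrbracket$-linear while $\GG_{m,K}$ acts by a uniform scalar, and $H$-invariance comes from the identity $\varpi_\gamma\mapsto\sigma(\varpi_\gamma)$ together with the fact that post-composition with $\sigma$ permutes the embeddings $\sigma_1,\dots,\sigma_n$ and $s_i$ is symmetric. The paper carries out the same computation directly on the polynomials $S_i$ in the coordinate ring rather than on field-valued points, but the content is identical.
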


\begin{proof}
The first assertion follows from the fact that the automorphism of
$K\llbracket t\rrbracket[w_{0},\dots,w_{n-1}]$ induced by $\sigma\in H$
is linear and degree-preserving. To show the second assertion, it
is enough to show that $S_{i}$ are $H$-invariant. For $\sigma,\sigma'\in H$,
we have
\begin{align*}
\sum_{b=0}^{n-1}\sigma(w_{b})\sigma'(\varpi^{b}) & =\sum_{b=0}^{n-1}\sum_{a=0}^{n-1}c_{ab}^{\sigma}w_{a}\sigma'(\varpi^{b})\\
 & =\sum_{a=0}^{n-1}w_{a}\sum_{b=0}^{n-1}c_{ab}^{\sigma}\sigma'(\varpi^{b})\\
 & =\sum_{a=0}^{n-1}w_{a}(\sigma'\sigma)(\varpi^{a}).
\end{align*}
Since $s_{i}$ are symmetric polynomials and sequences $(\sigma\sigma_{1})(\varpi^{a}),\dots,(\sigma\sigma_{m})(\varpi^{a})$
and $\sigma_{1}(\varpi^{a}),\dots,\sigma_{m}(\varpi^{a})$ are interchanged
by permutations, we have
\begin{align*}
S_{i}(\sigma(w_{0}),\dots,\sigma(w_{n-1})) & =s_{i}\left(\sum_{a=0}^{n-1}\sigma(w_{a})\sigma_{1}(\varpi^{a}),\dots,\sum_{a=0}^{n-1}\sigma(w_{a})\sigma_{n}(\varpi^{a})\right)\\
 & =s_{i}\left(\sum_{a=0}^{n-1}w_{a}(\sigma\sigma_{1})(\varpi^{a}),\dots,\sum_{a=0}^{n-1}w_{a}(\sigma\sigma_{n})(\varpi^{a})\right)\\
 & =S_{i}(w_{0},\dots,w_{n-1}).
\end{align*}
Thus $\phi$ is $H$-invariant. 
\end{proof}
The last lemma implies that the morphism $\phi_{\infty}\colon\J_{\infty}W\to\J_{\infty}V_{K}$
is also $H$-invariant and so is its restriction $\Uf_{A}\to\psi^{-1}(A)$.
Note that from Proposition \ref{prop:closed}, $\psi_{l}^{-1}(A)\subset\J_{l}V_{K}$,
$l\ge m$ and $\psi^{-1}(A)=\pi_{l}^{-1}(\psi_{l}^{-1}(A))\subset\J_{\infty}V_{K}$
are closed subsets. We obtain morphisms
\[
\overline{\phi_{\infty}}\colon\Uf_{A}/H\to\psi^{-1}(A)\text{ and }\overline{\phi_{l}}\colon\Uf_{A,l}/H\to\psi_{l}^{-1}(A)\quad(l\in\ZZ_{\ge0}).
\]

\begin{lem}
\label{lem:Uni/H to Eis}For each extension $L/K$, the map $\Uf_{A}(L)/H\to\psi^{-1}(A)(L)$
is bijective.
\end{lem}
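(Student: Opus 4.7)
The plan is to establish surjectivity and injectivity of the set map $\overline{\phi_{\infty}}(L)\colon \Uf_{A}(L)/H \to \psi^{-1}(A)(L)$ separately.

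For surjectivity, I would deduce it directly from Lemma \ref{lem:image of Uni}, which already gives $\phi_{\infty}(\Uf_{A}) = \psi^{-1}(A)$. Concretely, given any $y \in \psi^{-1}(A)(L)$, the associated $L\tpars$-algebra $A_{y} = L\tpars[x]/(f_{y})$ is isomorphic to $A_{L} = A \otimes_{K\tpars} L\tpars$; fixing such an isomorphism $\rho\colon A_{y} \xrightarrow{\sim} A_{L}$, the element $\varpi' := \rho(x) \in A_{L}$ is a uniformizer whose minimal polynomial is $f_{\varpi'} = y$. Expanding $\varpi' = \sum_{a=0}^{n-1}\gamma_{a}\varpi^{a}$ in the $L\tbrats$-basis $1, \varpi, \ldots, \varpi^{n-1}$ of $\cO_{A_{L}}$ yields a point $\gamma \in \Uf_{A}(L)$ with $\phi_{\infty}(\gamma) = y$.

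For injectivity, suppose $\gamma, \gamma' \in \Uf_{A}(L)$ satisfy $\phi_{\infty}(\gamma) = \phi_{\infty}(\gamma')$. Unwinding the definition of $\phi_{\infty}$ via the symmetric functions $S_{i}$, this equality is equivalent to $f_{\varpi_{\gamma}} = f_{\varpi_{\gamma'}}$. Hence $\varpi_{\gamma}$ and $\varpi_{\gamma'}$ are two uniformizers of $A_{L}$ satisfying the same Eisenstein minimal polynomial over $L\tpars$; since each generates $A_{L}$ as an $L\tpars$-algebra, there exists a unique $L\tpars$-algebra automorphism $\tau$ of $A_{L}$ with $\tau(\varpi_{\gamma}) = \varpi_{\gamma'}$. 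The remaining step is to produce $\sigma \in H$ whose image in $\Aut(A_{L}/L\tpars)$ coincides with $\tau$; granting this, the identity $\varpi_{\sigma \cdot \gamma} = \sigma(\varpi_{\gamma})$ (derived exactly as in the proof of Lemma \ref{lem:actions commute}) combined with the bijection $\Uf_{A}(L) \leftrightarrow \{\text{uniformizers of } \cO_{A_{L}}\}$ forces $\sigma \cdot \gamma = \gamma'$, placing $\gamma$ and $\gamma'$ in the same $H$-orbit.

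The main obstacle I anticipate is the identification of $\tau$ with an element of $H$: since the $H$-action on $W$ is defined over $K\tbrats$ while $\varpi_{\gamma}$ lives in the base-changed algebra $A_{L}$, one cannot simply restrict $\tau$ to $A$. The argument will have to exploit the fact that $\varpi_{\gamma}$ and $\varpi_{\gamma'}$ are $L\tbrats$-linear combinations of the fixed $K\tpars$-basis $1, \varpi, \ldots, \varpi^{n-1}$ of $A$, perhaps by lifting $\tau$ to the Galois closure $\widetilde{A}_{L}$ and invoking the $G$-invariance built into the construction of $\phi$ via the $S_{i}$, together with the description $H = \widetilde{H}/E$ to pin $\tau$ down as coming from $\widetilde{H}$.
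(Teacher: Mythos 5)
Your surjectivity argument is correct and coincides with the paper's (it is an immediate consequence of Lemma \ref{lem:image of Uni}). Your reduction of injectivity is also the right one: $f_{\varpi_{\gamma}}=f_{\varpi_{\gamma'}}$ yields a unique $\tau\in\Aut(A_{L}/L\tpars)$ with $\tau(\varpi_{\gamma})=\varpi_{\gamma'}$, and everything hinges on whether $\tau$ is induced by an element of $H=\Aut(A/K\tpars)$. But that is exactly the step you leave open, and it is not a formality that can be supplied by ``lifting to the Galois closure and invoking $G$-invariance'': the question is precisely whether the natural injection $\Aut(A/K\tpars)\hookrightarrow\Aut(A_{L}/L\tpars)$, $\sigma\mapsto\sigma\otimes\id$, is surjective, and for a general extension $L/K$ it is not. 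For instance, take $p\nmid n$, $A=K\tpars[x]/(x^{n}-t)$ and $L$ with $\mu_{n}(L)\supsetneq\mu_{n}(K)$: the uniformizers $\varpi$ and $\zeta\varpi$ with $\zeta\in\mu_{n}(L)\setminus\mu_{n}(K)$ have the same minimal polynomial $x^{n}-t$ over $L\tpars$ but lie in different $H$-orbits, so no argument can produce the desired $\sigma$ unconditionally. (The same subtlety is elided in the paper's one-line proof: a root of $f_{\varpi'}$ in $A_{L}$ need not equal any $\sigma_{i}(\varpi')$, because in the non-connected algebra $\widetilde{A}\otimes_{K\tpars}L\tpars$ the polynomial $\prod_{i}(x-\sigma_{i}(\varpi'))$ acquires ``mix-and-match'' roots.)

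What saves the statement in the only situation where it is used (Lemma \ref{lem:A^m fib}, after the reduction to $K=k$ algebraically closed) is the following argument, which is the missing content. Assume $K$ algebraically closed and embed everything in a fixed algebraic closure of $L\tpars$. Any root of the minimal polynomial of $\varpi$ over $K\tpars$ that lies in $A_{L}$ lies in $M:=A_{L}\cap\overline{K\tpars}$, and $M=A$: a finite subextension of $M/K\tpars$ containing $A$ has residue field algebraic over $K$, hence equal to $K$, so it is totally ramified of degree equal to its ramification index $e$; the inclusion of value groups into that of $A_{L}$ forces $e\mid n$, while $A\subseteq M$ forces $n\mid e$. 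Hence every root of that minimal polynomial in $A_{L}$ already lies in $A$, so $\Aut(A/K\tpars)\to\Aut(A_{L}/L\tpars)$ is bijective and your $\tau$ does come from $H$; the identity $\varpi_{\sigma\cdot\gamma}=\sigma(\varpi_{\gamma})$ then finishes the proof as you indicate. You should either insert this argument (restricting to algebraically closed $K$, as the paper implicitly does when applying the lemma) or add that hypothesis to the statement; as written, the obstacle you flag is a genuine, unfilled gap.
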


\begin{proof}
For a uniformizer $\varpi'$, the associated Eisenstein polynomial
is also written as $f_{\varpi'}=\prod_{i=1}^{n}(x-\sigma_{i}(\varpi'))$.
Therefore, if $f_{\varpi'}=f_{\varpi''}$, then $\varpi'$ and $\varpi''$
are conjugate and $\varpi''=\sigma(\varpi')$ for some $\sigma\in H$.
Thus $\Uf_{A}(L)/H\to\psi^{-1}(A)(L)$ is injective. The surjectivity
follows from Lemma \ref{lem:image of Uni}. 
\end{proof}
We can summarize relations among various spaces that we obtained so
far in the following diagram. 

\[
\xymatrix{\Uf_{A}\ar@{^{(}->}[rr]\ar[d] &  & \J_{\infty}W\ar[dd]^{\phi_{\infty}}\\
\Uf_{A}/H\ar[d]_{\textrm{bij.}}^{\overline{\phi_{\infty}}}\\
\psi^{-1}(A)\ar@{^{(}->}[r]\ar[d] & \Eis^{(m)}\otimes_{k}K\ar@{^{(}->}[r]\ar[d]^{\psi\otimes_{k}K} & \J_{\infty}V\\
A\ar@{}[r]|{\in} & \Delta_{n}^{\circ}\otimes_{k}K
}
\]

Let $e$ be the ramification index of $\widetilde{A}/K\tpars$ and
let $\mathfrak{D}_{\widetilde{A}/K\tpars}$ and $\fD_{A/K\tpars}$
be the differents of $A/K\tpars$ and $\widetilde{A}/K\tpars$, which
are principal ideals of $\cO_{A}$ and $\cO_{\widetilde{A}}$, respectively.
We denote the normalized valuation on $\widetilde{A}$ by $v_{\widetilde{A}}$
and the unique extension of the valuation $\ord$ to $\widetilde{A}$
again by $\ord$. The two valuations are related by $e\cdot v_{\widetilde{A}}=\ord$. 
\begin{lem}
\label{lem:different of Gal closure}We have
\[
\frac{v_{\tilde{A}}(\fD_{\tilde{A}/K\llparenthesis t\rrparenthesis})}{e}=\ord\fD_{\tilde{A}/K\llparenthesis t\rrparenthesis}\le n\cdot\ord\fD_{A/K\llparenthesis t\rrparenthesis}=m.
\]
\end{lem}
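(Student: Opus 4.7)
The outer two equalities are essentially bookkeeping. The first one, $v_{\widetilde{A}}(\fD_{\widetilde{A}/K\tpars})/e=\ord\fD_{\widetilde{A}/K\tpars}$, is immediate from the stated relation between $v_{\widetilde{A}}$ and $\ord$. For the last equality $n\cdot\ord\fD_{A/K\tpars}=m$, I would combine two standard facts for the totally ramified extension $A/K\tpars$: first, $\bd_{A/K\tpars}=v_A(\fD_{A/K\tpars})$, which follows from $\Disc(A/K\tpars)=N_{A/K\tpars}(\fD_{A/K\tpars})$ together with the fact that the norm of the maximal ideal of $\cO_A$ is the maximal ideal of $\cO_{K\tpars}$ in the totally ramified case; and second, $v_A=n\cdot\ord$ on $A$.

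The real content is the middle inequality $\ord\fD_{\widetilde{A}/K\tpars}\le n\cdot\ord\fD_{A/K\tpars}$. My plan is to realize $\widetilde{A}$ as the compositum of the $n$ conjugate subfields $A_i:=\sigma_i(A)\subset\widetilde{A}$, $i=1,\dots,n$, using that $\widetilde{A}$ is the Galois closure of $A/K\tpars$. Along the tower
\[
K\tpars\subset A_1\subseteq A_1A_2\subseteq\cdots\subseteq A_1\cdots A_n=\widetilde{A},
\]
transitivity of the different yields
\[
\ord\fD_{\widetilde{A}/K\tpars}=\ord\fD_{A_1/K\tpars}+\sum_{i=2}^{n}\ord\fD_{A_1\cdots A_i/A_1\cdots A_{i-1}}.
\]
At each successive step I would apply the classical bound for the different under a compositum: if $M/K'$ is a finite separable extension with $\cO_M=\cO_{K'}[\alpha]$ and $L/K'$ is any finite extension of local fields inside a common overfield, then $\fD_{LM/L}$ divides $\fD_{M/K'}\cdot\cO_{LM}$. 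Each $A_i$ is totally ramified over $K\tpars$ and is therefore monogenic, with $\cO_{A_i}=\cO_{K\tpars}[\sigma_i(\varpi)]$, so the bound applies with $M=A_i$ and gives $\ord\fD_{A_1\cdots A_i/A_1\cdots A_{i-1}}\le\ord\fD_{A_i/K\tpars}=\ord\fD_{A/K\tpars}$. Summing the $n$ resulting contributions delivers the desired inequality.

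The main obstacle is verifying the compositum bound carefully; everything else is routine. The argument I have in mind is: let $g$ be the minimal polynomial of $\alpha$ over $L$ and $f$ its minimal polynomial over $K'$; by Gauss's lemma $g\in\cO_L[X]$ and $g\mid f$ in $\cO_L[X]$, so $g'(\alpha)\mid f'(\alpha)$ in $\cO_L[\alpha]$. From $\cO_L[\alpha]\subseteq\cO_{LM}$ and the trace-duality characterization of the different, one extracts $\fD_{LM/L}\supseteq(g'(\alpha))\cdot\cO_{LM}$; chaining this with $(g'(\alpha))\supseteq(f'(\alpha))$ and the identity $(f'(\alpha))=\fD_{M/K'}$ (valid because $\cO_M=\cO_{K'}[\alpha]$) gives the required divisibility.
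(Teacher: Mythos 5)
Your argument is correct and follows essentially the same route as the paper: the paper likewise treats the outer equalities as bookkeeping (norm of the different plus $v_{\tilde A}=e\cdot\ord$ and $v_A=n\cdot\ord$) and reduces the middle inequality to the divisibility $\fD_{\tilde A/K\tpars}\mid(\fD_{A/K\tpars})^{n}$ coming from writing $\tilde A$ as the compositum of the conjugates of $A$. The only difference is that the paper cites Toyama's compositum bound and declares its extension to $n$ factors straightforward, whereas you supply that step yourself via the tower, transitivity of the different, and the $g'(\alpha)\mid f'(\alpha)$ argument (using that each conjugate $A_i$ is monogenic over $K\tpars$), which is exactly the standard proof of the cited result.
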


\begin{proof}
The left equality is obvious. The right equality follows from the
fact that the discriminant is the norm of the different and a formula
for valuations of norms (for example, \cite[Chapter II, (4.8) and Chapter III, (2.9)]{neukirch1999algebraic}).
The Galois closure $\tilde{A}$ is obtained as the composite of all
conjugates of $A$ in an algebraic closure of $K\llparenthesis t\rrparenthesis$.
This shows $\fD_{\tilde{A}/K\llparenthesis t\rrparenthesis}\mid(\fD_{A/K\llparenthesis t\rrparenthesis})^{n}$.
Indeed Toyama \cite{t^oyama1955anote2} proved the corresponding result
for the composite of two number fields. The same argument applies
to the case of local fields and the generalization to the composite
of an arbitrary number of local fields is straightforward. It follows
that $\ord\fD_{\tilde{A}/K\llparenthesis t\rrparenthesis}\le n\cdot\ord\fD_{A/K\llparenthesis t\rrparenthesis}$. 
\end{proof}
\begin{lem}
\label{lem:free action}For $l\ge m$, the $H$-action on $\Uf_{A,l}$
is free.
\end{lem}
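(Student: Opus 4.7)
The plan is to argue by contradiction, leveraging the classical fact that the different of a totally ramified extension controls the valuation of $\tau(\varpi') - \varpi'$ for any non-trivial automorphism $\tau$ and uniformizer $\varpi'$.

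First I would fix an extension $L/K$, an element $\sigma \in H \setminus \{1\}$, and a point $\gamma \in \Uf_{A,l}(L)$ with $\sigma \cdot \gamma = \gamma$, and derive a contradiction. Since the Eisenstein polynomial $f$ of $A/K\tpars$ remains Eisenstein over $L\tbrats$, the $L\tpars$-algebra $A_L := A \otimes_{K\tpars} L\tpars$ is a totally ramified field extension of $L\tpars$ of degree $n$ and discriminant exponent $m$, and $\sigma_L := \sigma \otimes \id$ is a non-trivial element of $\Aut(A_L/L\tpars)$. Lifting $\gamma$ to $\tilde\gamma \in \Uf_A(L)$ produces a uniformizer $\varpi' := \sum_{a=0}^{n-1} \tilde\gamma_{a}\,\varpi^{a}$ of $\cO_{A_L}$; the computation of the $H$-action preceding Lemma \ref{lem:actions commute} shows that $\sigma$ sends $\varpi'$ to $\sigma_L(\varpi')$, so the hypothesis $\sigma \cdot \gamma = \gamma$ becomes $\sigma_L(\varpi') \equiv \varpi' \pmod{t^{l+1}\cO_{A_L}}$, i.e.
$$v_{A_L}\bigl(\sigma_L(\varpi') - \varpi'\bigr) \ge n(l+1) \ge n(m+1) > m$$
(using $l \ge m$ and reducing to $n \ge 2$, since otherwise $H$ is trivial).

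The heart of the proof will be the following claim: \emph{for any uniformizer $\varpi' \in \cO_{A_L}$ and any $\tau \in \Aut(A_L/L\tpars) \setminus \{1\}$, one has $v_{A_L}(\tau(\varpi') - \varpi') \le m$.} To establish it, let $f'(x) \in L\tbrats[x]$ be the minimal polynomial of $\varpi'$, which is Eisenstein of degree $n$; by the formula of Section \ref{sec:Discriminants}, $\fD_{A_L/L\tpars} = (f''(\varpi'))$, so $v_{A_L}(f''(\varpi')) = m$. Let $\varpi' = \varpi_1', \ldots, \varpi_n'$ be the roots of $f'$ in a Galois closure $\widetilde{A_L}$ of $A_L/L\tpars$, and let $e'$ denote the ramification index of $\widetilde{A_L}/A_L$, so that $v_{\widetilde{A_L}}|_{A_L} = e' \cdot v_{A_L}$. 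Factoring $f''(\varpi') = \prod_{j \ge 2}(\varpi' - \varpi_j')$ yields
$$e' m \;=\; v_{\widetilde{A_L}}(f''(\varpi')) \;=\; \sum_{j=2}^{n} v_{\widetilde{A_L}}(\varpi' - \varpi_j').$$
Since $\tau(\varpi')$ is a root of $f'$ distinct from $\varpi'$, it coincides with some $\varpi_j'$ for $j \ge 2$; as each summand is a non-negative integer, $v_{\widetilde{A_L}}(\tau(\varpi') - \varpi') \le e' m$, hence $v_{A_L}(\tau(\varpi') - \varpi') \le m$. Applying this to $\tau = \sigma_L$ contradicts the displayed inequality above, proving that no $\gamma$ is fixed by a non-identity element of $H$.

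The only genuine obstacle I anticipate is the bookkeeping: verifying that $A_L$ remains a field (immediate from Eisenstein irreducibility of $f$ over $L\tbrats$), that its discriminant exponent is still $m$, and that the normalizations of valuations on $A_L$ and on its Galois closure are compatible. The substantive ingredient is the well-known identity $v_{A_L}(\fD_{A_L/L\tpars}) = \sum_{j \ge 2} v(\varpi' - \varpi_j')$ coming from $\fD_{A_L/L\tpars} = (f''(\varpi'))$, together with positivity of each term.
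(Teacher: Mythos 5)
Your proof is correct, but it takes a genuinely different and more elementary route than the paper's. The paper passes to the Galois closure $\widetilde{A}$, bounds $\ord\fD_{\widetilde{A}/K\tpars}\le m$ (Lemma \ref{lem:different of Gal closure}), computes that $G^{2m}=1$ via the Herbrand function, and then invokes Deligne's Proposition A.6.1 together with \cite[p.~61, Lemma 1]{serre1979localfields} to conclude $\ord(\sigma(\varpi')-\varpi')<m+1$ for \emph{every} $K\tpars$-embedding $\sigma\ne\sigma_0$ of $A$ into $\widetilde{A}$. You instead stay inside $A_L$ itself: since $\cO_{A_L}=L\tbrats[\varpi']$ is monogenic, the different is generated by $g'(\varpi')$ for $g$ the minimal polynomial of $\varpi'$, and factoring $g'(\varpi')=\prod_{j\ge2}(\varpi'-\varpi_j')$ with each factor of non-negative valuation immediately gives $v_{A_L}(\tau(\varpi')-\varpi')\le v_{A_L}(\fD_{A_L/L\tpars})=m$ for every $\tau\in\Aut(A_L/L\tpars)\setminus\{1\}$ (this is essentially the elementary half of Serre's $i_G(\tau)\le v(\fD)$, applied directly to $A$ rather than to its Galois closure). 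This buys you three things: you avoid Lemma \ref{lem:different of Gal closure}, the ramification-group/Herbrand computation, and the Deligne--Serre inequality entirely; you only need to control automorphisms of $A$ rather than all embeddings into $\widetilde{A}$, which is all the lemma requires; and your bound is sharper (valuation $\le m$ in $v_{A_L}$, i.e.\ $\ord\le m/n$, versus the paper's $\ord<m+1$), so freeness would in fact already hold for smaller $l$. The bookkeeping points you flag (that $A_L$ is a field of discriminant exponent $m$, that the $H$-action on coordinates realizes $\sigma_L$ on $\varpi_\gamma$, and the normalization $v_{\widetilde{A_L}}|_{A_L}=e'\,v_{A_L}$) are all routine and check out.
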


\begin{proof}
By taking a base change, we may assume that $K$ is algebraically
closed, in particular, perfect. Then, we can apply the theory of ramifications
groups explained in \cite[Ch.apter IV]{serre1979localfields}; we
follow the notation from this reference. From \cite[p.  64,  Proposition 4]{serre1979localfields},
for $1\ne g\in G$, $i_{G}(g)\le v_{\tilde{A}}(\fD_{\tilde{A}/K\llparenthesis t\rrparenthesis})$.
Let $a=em$ and let $G_{a}$ be the $a$-th ramification group (lower
numbering) consisting of $g$ with $i_{G}(g)\ge a+1$. From Lemma
\ref{lem:different of Gal closure}, for $g\ne1$, $i_{G}(g)\le a$
and $g\notin G_{a}$. Namely $G_{a}=1$. Let $\varphi_{\tilde{A}/K\llparenthesis t\rrparenthesis}$
be the Herbrand function so that $G_{a}=G^{\varphi_{\tilde{A}/K\llparenthesis t\rrparenthesis}(a)}$,
where the right hand side is a higher ramification group with the
upper numbering. From \cite[p.64, Proposition 4 and p. 74, Lemma 3]{serre1979localfields},
we have

\begin{align*}
\varphi_{\tilde{A}/K\llparenthesis t\rrparenthesis}(a) & =\frac{1}{e}\sum_{g\in G}\min\{i_{G}(g),a+1\}-1\\
 & =\frac{1}{e}\sum_{g\ne1}i_{G}(g)+\frac{a+1}{e}-1\\
 & =\frac{v_{\tilde{A}}(\fD_{\tilde{A}/K\llparenthesis t\rrparenthesis})}{e}+\frac{a+1}{e}-1\\
 & \le m+\frac{em+1}{e}-1\\
 & \le2m.
\end{align*}
Thus $G^{2m}=1$. From \cite[Proposition A.6.1]{deligne1984lescorps}
and \cite[p.61,  Lemma 1]{serre1979localfields}, if $\sigma\colon A\hookrightarrow\tilde{A}$
is a $K\tpars$-embedding different from the chosen embedding $A\subset\widetilde{A}$
and if $\varpi'\in\cO_{A}$ is a uniformizer, then 
\begin{align*}
2\cdot\ord(\sigma(\varpi')-\varpi') & \le\sum_{\tau\ne\sigma_{0}}\ord(\tau(\varpi')-\varpi')+\sup_{\tau\ne\sigma_{0}}\ord(\tau(\varpi')-\varpi')\\
 & <2m+1.
\end{align*}
 and hence $\ord(\sigma(\varpi')-\varpi')<m+1$. In particular, for
$1\ne h\in H$, if $l\ge m$, then
\[
h(\varpi')\ne\varpi'\mod t^{l+1}.
\]
This shows the lemma. 
\end{proof}

\section{Bundles having ``almost affine spaces'' as fibers}

We keep the notation from the last section. In particular, $A$ denotes
a totally ramified extension of $K\tpars$ of degree $n$ and discriminant
exponent $m$. Let $\cJ\subset K\llbracket t\rrbracket[w_{0},\dots,w_{n-1}]$
be the Jacobian ideal of the morphism $\phi\colon W\to V$. Namely,
$\cJ$ is the principal ideal generated by the determinant of the
Jacobian matrix 
\[
\frac{\partial(S_{1},\dots,S_{n})}{\partial(w_{0},\dots,w_{n-1})}.
\]
Its order function 
\[
\ord_{\cJ}\colon\J_{\infty}W\to\ZZ_{\ge0}\cup\{\infty\}
\]
is defined as follows: for a point $\gamma\in\J_{\infty}W$ represented
by a morphism $\Spec L\to\J_{\infty}W$ with $L$ a field, if $\gamma'\colon\Spec L\tbrats\to W$
is the corresponding arc and if we can write $(\gamma')^{-1}\cJ=(t^{n})\subset L\tbrats$,
then $\ord_{\cJ}(\gamma)=n$. Here, we followed the convention that
$(0)=(t^{\infty})$. 
\begin{lem}
The restriction $\ord_{\cJ}|_{\Uf_{A}}$ of $\ord_{\cJ}$ to $\Uf_{A}$
takes the constant value $m$. 
\end{lem}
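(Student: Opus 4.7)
The approach is to compute the Jacobian determinant of $\phi$ explicitly by factoring it over the Galois closure $\widetilde{A}$, and then reading off its valuation from the discriminants of $A/K\tpars$ and of $A_L/L\tpars$. Introduce auxiliary coordinates $u_j := \sigma_j(\varpi_w) = \sum_{a=0}^{n-1} w_a\,\sigma_j(\varpi)^{a}$ for $j=1,\dots,n$, where $\varpi_w := \sum_a w_a \varpi^a$. Then, over $\cO_{\widetilde{A}}$, the morphism $\phi$ factors as $\phi_2\circ\phi_1$, with $\phi_1\colon (w_a)\mapsto(u_j)$ the linear map of matrix $M := (\sigma_j(\varpi)^a)_{j,a}$ and $\phi_2\colon(u_j)\mapsto ((-1)^{i}s_i(u))_i$ the (signed) elementary symmetric polynomial map.

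By the chain rule, together with the classical identity $\det(\partial s_i/\partial u_j) = \pm \prod_{j<k}(u_k-u_j)$, the Jacobian of $\phi$ is
\[
J(w) \;=\; \pm\,\Delta\cdot\prod_{j<k}\bigl(\sigma_k(\varpi_w)-\sigma_j(\varpi_w)\bigr),\quad \Delta := \det M \;=\; \pm\prod_{j<k}\bigl(\sigma_k(\varpi)-\sigma_j(\varpi)\bigr).
\]
Squaring the first factor yields $\Delta^{2}=\pm D(1,\varpi,\dots,\varpi^{n-1})$. Because $\varpi$ is a uniformizer of the Eisenstein extension $A/K\tpars$, the set $\{1,\varpi,\dots,\varpi^{n-1}\}$ is an $\cO_K$-module basis of $\cO_A$, so this discriminant generates the discriminant ideal of $A/K\tpars$ and has order $m$. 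Hence $\ord\Delta=m/2$ in the unique extension of $\ord$ to $\widetilde{A}$.

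For the second factor, evaluate at $w=\gamma\in\Uf_A(L)$; then $\varpi_\gamma\in\cO_{A_L}$ is by definition a uniformizer, so the same reasoning applied to the (totally ramified) extension $A_L/L\tpars$ gives
\[
\Bigl(\prod_{j<k}\bigl(\sigma_k(\varpi_\gamma)-\sigma_j(\varpi_\gamma)\bigr)\Bigr)^{2} \;=\; \pm\, D(1,\varpi_\gamma,\dots,\varpi_\gamma^{n-1}),
\]
which has order $\ord\disc(A_L/L\tpars)=m$ since the discriminant is stable under the flat base change $K\tpars\to L\tpars$. Combining, $\ord J(\gamma)=m/2+m/2=m$, i.e. $\ord_{\cJ}(\gamma)=m$, proving the lemma.

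The main technical subtlety I anticipate is bookkeeping with the two Vandermonde factors: individually they lie in $\widetilde{A}$ (resp.\ its base change to $L$) where $\ord$ can take rational values, so the value $m/2$ is legitimate only there, while their product is forced into $K\tbrats$ (resp.\ $L\tbrats$) with integer valuation $m$. The other thing to be careful about is confirming that for $\gamma\in\Uf_A(L)$ the ring $\cO_{A_L}$ is genuinely a discrete valuation ring admitting $\varpi_\gamma$ as uniformizer, so that the Eisenstein argument giving $\{1,\varpi_\gamma,\dots,\varpi_\gamma^{n-1}\}$ as a basis of $\cO_{A_L}/L\tbrats$ applies; this is encoded in the definition of $\Uf_A$.
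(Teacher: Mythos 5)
Your proposal is correct and follows essentially the same route as the paper: factor the Jacobian via the chain rule into the Vandermonde determinant coming from the elementary symmetric polynomials and the linear change-of-basis matrix $(\sigma_j(\varpi)^a)_{j,a}$, identify the square of each factor with a discriminant of a power basis generated by a uniformizer, and conclude that each factor has order $m/2$. The paper's proof is the same computation, so there is nothing further to add.
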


\begin{proof}
Since $S_{i}$ is the composite of $s_{i}(x_{1},\dots,x_{n})$ and
$x_{j}=\sum_{a}w_{a}\sigma_{j}(\varpi^{a})$, the above Jacobian matrix
can be written as 
\begin{equation}
\left.\frac{\partial(s_{1},\dots,s_{n})}{\partial(x_{1},\dots,x_{n})}\right|_{x_{i}=\sum_{a=0}^{n-1}w_{a}\sigma_{i}(\varpi^{a})}\times\frac{\partial\left(\sum_{a=0}^{n-1}w_{a}\sigma_{1}(\varpi^{a}),\dots,\sum_{a=0}^{n-1}w_{a}\sigma_{n}(\varpi^{a})\right)}{\partial(w_{0},\dots,w_{n-1})}.\label{eq:factor}
\end{equation}
Concerning the first factor, as was proved in \cite[p.  1036]{serre1978unetextquotedblleftformule},
we have 
\[
\det\frac{\partial(s_{1},\dots,s_{n})}{\partial(x_{1},\dots,x_{n})}=\prod_{i<j}(x_{i}-x_{j}).
\]
For $\gamma=(\gamma_{0},\dots,\gamma_{n-1})\in\Uf_{A}$, substituting
$\gamma_{a}$ for $w_{a}$ in 
\[
\left.\left(\prod_{i<j}(x_{i}-x_{j})\right)\right|_{x_{i}=\sum_{a=0}^{n-1}w_{a}\sigma_{i}(\varpi^{a})},
\]
we get
\[
\prod_{i<j}(\sigma_{i}(\varpi_{\gamma})-\sigma_{j}(\varpi_{\gamma})).
\]
From \cite[the proof of Proposition 1.29]{keune2023numberfields},
its square is the discriminant of $1,\varpi_{\gamma},\dots,\varpi_{\gamma}^{n-1}$.

On the other hand, concerning the second factor of (\ref{eq:factor}),
we have
\[
\frac{\partial\left(\sum_{a=0}^{n-1}w_{a}\sigma_{1}(\varpi^{a}),\dots,\sum_{a=0}^{n-1}w_{a}\sigma_{n}(\varpi^{a})\right)}{\partial(w_{0},\dots,w_{n-1})}=(\sigma_{i}(\varpi^{a}))_{i,a}.
\]
The discriminant of $A/k\tpars$ with respect to the basis $1,\varpi,\dots,\varpi^{n-1}$
is the square of $\det(\sigma_{i}(\varpi^{a}))_{i,a}$ \cite[Proposition 1.28]{keune2023numberfields}.
We have showed that the determinants of the two factors in (\ref{eq:factor})
both have order $m/2$, which shows $\ord_{\cJ}(\gamma)=m$. 
\end{proof}
\begin{lem}
For an extension $L/K$ and for $l\ge2m$, every fiber of $\Uf_{A,l}(L)/H\to\Eis_{l}^{(m)}(L)$
is contained in a fiber of $\Uf_{A,l}(L)/H\to\Uf_{l-m}(L)/H$.
\end{lem}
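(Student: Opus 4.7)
The plan is to unpack the statement concretely and then invoke the Hensel--Greenberg lifting lemma for arcs available in the framework of motivic integration (see \cite{sebag2004integration,chambert-loir2018motivic}). Concretely, the statement asserts that for $\gamma,\gamma'\in\Uf_{A,l}(L)$ whose images in $\Eis_{l}^{(m)}(L)$ coincide, there must exist $\sigma\in H$ such that $\gamma\equiv\sigma\gamma'\pmod{t^{l-m+1}}$. The first step is to choose arbitrary arc lifts $\tilde{\gamma},\tilde{\gamma}'\in\Uf_{A}(L)$ of $\gamma,\gamma'$; the hypothesis that they have equal image in $\Eis_{l}^{(m)}(L)$ translates into the congruence $\phi_{\infty}(\tilde{\gamma})\equiv\phi_{\infty}(\tilde{\gamma}')\pmod{t^{l+1}}$ in $\J_{\infty}V_{K}$.

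The schemes $W=\AA_{K\tbrats}^{n}$ and $V_{K}=\AA_{K\tbrats}^{n}$ are smooth of the same relative dimension over $K\tbrats$, and by the previous lemma $\ord_{\cJ}$ takes the constant value $m$ on $\Uf_{A}$. Under the hypothesis $l\ge 2m$, the Hensel--Greenberg lifting lemma then applies with $y=\tilde{\gamma}$ and $x'=\phi_{\infty}(\tilde{\gamma}')$, producing an arc $\tilde{\gamma}''\in\J_{\infty}W(L)$ satisfying $\phi_{\infty}(\tilde{\gamma}'')=\phi_{\infty}(\tilde{\gamma}')$ as honest arcs and $\tilde{\gamma}''\equiv\tilde{\gamma}\pmod{t^{l-m+1}}$. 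Since membership in $\Uf_{A}$ is determined purely by the $0$-th jet (namely by the conditions that the constant terms of the $w_{0}$- and $w_{1}$-components vanish, respectively do not vanish) and $l-m+1\ge 1$, the new arc $\tilde{\gamma}''$ still lies in $\Uf_{A}(L)$. Applying Lemma \ref{lem:Uni/H to Eis} to the equality $\phi_{\infty}(\tilde{\gamma}'')=\phi_{\infty}(\tilde{\gamma}')$ then yields some $\sigma\in H$ with $\tilde{\gamma}''=\sigma\tilde{\gamma}'$, and truncating the resulting congruence $\tilde{\gamma}\equiv\sigma\tilde{\gamma}'\pmod{t^{l-m+1}}$ to level $l-m$ gives the desired equality in $\Uf_{A,l-m}(L)/H$.

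The only substantive input is the Hensel--Greenberg lifting, and this is where I expect the main technical care to be required: one must locate a statement applicable to morphisms of smooth schemes over a complete DVR base and verify the precise form of the hypotheses on the Jacobian and the precision threshold. In our setting these hypotheses---morphism of smooth schemes of equal relative dimension, $\ord_{\cJ}(\tilde{\gamma})=m$, and precision $l\ge 2m=2\ord_{\cJ}(\tilde{\gamma})$---are exactly those supplied by the previous lemma and the ambient setup, so no serious work is required beyond citing the correct form. The remaining steps (stability of $\Uf_{A}$ under the congruence and the extraction of $\sigma$ from Lemma \ref{lem:Uni/H to Eis}) are routine bookkeeping.
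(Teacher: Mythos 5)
Your proposal is correct and follows essentially the same route as the paper's proof: both hinge on the Greenberg--Hensel lifting statement (\cite[Chapter 5, Proposition 3.1.7]{chambert-loir2018motivic}) applied with $\ord_{\cJ}\equiv m$ on $\Uf_{A}$ and the threshold $l\ge 2m$, followed by Lemma \ref{lem:Uni/H to Eis} to identify the corrected arc with an $H$-translate; the only (immaterial) difference is which of the two lifted arcs you modify. Your explicit check that the corrected arc stays in $\Uf_{A}$ via the $0$-th jet is a detail the paper leaves implicit.
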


\begin{proof}
Suppose that two points $b_{l},b_{l}'\in\Uf_{A,l}(L)$ map to the
same point $a_{l}\in\Eis_{l}^{(m)}(L)$. Let $b,b'\in\Uf_{A}(L)$
be lifts of $b_{l},b_{l}'$ respectively respectively and let $a:=\phi_{\infty}(b)\in\Eis^{(m)}(L)$.
Then, from \cite[Chapter 5,  Proposition 3.1.7]{chambert-loir2018motivic},
there exists $c\in\Uf_{A}$ such that $\phi_{\infty}(c)=a$ and the
images $c_{l-m},b'_{l-m}\in\Uf_{A,l-m}$ of $c,b'$ are the same.
Note that the constant $c_{V}$ appearing in the cited result is taken
to be $1$ in our situation, since $V$ is smooth. The condition that
$\phi_{\infty}(c)=\phi_{\infty}(b)=a$ implies that $c$ is in the
$H$-orbit of $b$. Therefore $H(c_{l-m})=H(b_{l-m})=H(b_{l-m}')$.
This shows the lemma.
\end{proof}
\begin{defn}
\label{def:bundle}Let $r$ be a non-negative integer. We say that
a morphism $f\colon Y\to X$ of $k$-varieties is an\emph{ $\AA^{r}$-bundle
}(resp\emph{.~pseudo-$\AA^{r}$-bundle}) if for every geometric point
$x\in X(K)$, the fiber $f^{-1}(x)$ is $K$-isomorphic to $\AA_{K}^{r}$
(resp.~to the quotient $\AA_{K}^{r}/C$ of the affine space $\AA_{K}^{n}$
by a finite cyclic group $C$ of order coprime to $p$).
\end{defn}

\begin{defn}
A variety $X$ over $k$ is said to be a \emph{weak affine space}
if there exists a sequence of $k$-varieties $X=X_{0},X_{1},\dots,X_{l}=\Spec k$
such that for each $0\le i<l$, there exists a morphism between $X_{i}$
and $X_{i+1}$ in either direction which is a weak $\AA^{r_{i}}$-bundle
for some $r_{i}$. A weak affine space of dimension $r$ is called
also as a \emph{weak $\AA_{k}^{r}$}. 
\end{defn}

\begin{lem}
\label{lem:A^m fib}For $l\ge2m$, the morphism $\overline{\phi_{l}}\colon\Uf_{A,l}/H\to\psi_{l}^{-1}(A)$
is an $\AA^{m}$-bundle. 
\end{lem}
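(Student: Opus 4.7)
The plan is to verify the condition of Definition \ref{def:bundle}: for every geometric point $y\in\psi_{l}^{-1}(A)(K)$, show $\overline{\phi_{l}}^{-1}(y)\cong\AA_{K}^{m}$. The strategy is to pull back to $\Uf_{A,l}$ before taking the $H$-quotient and compute the fiber explicitly by Taylor linearization around a chosen lift, controlled by the Jacobian $\cJ$. After base change, assume $K$ is algebraically closed. Let $\tilde{y}\in\Eis^{(m)}(K)$ be the canonical lift of $y$; by Lemmas \ref{lem:image of Uni} and \ref{lem:Uni/H to Eis}, the set $\phi_{\infty}^{-1}(\tilde{y})\cap\Uf_{A}$ is a single free $H$-orbit $H\cdot b_{0}$. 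Put $z:=\pi_{l-m}(b_{0})\in\Uf_{A,l-m}$. The preceding lemma, together with freeness of the $H$-action on $\Uf_{A,l-m}$ (Lemma \ref{lem:free action}, applicable since $l-m\ge m$), yields the disjoint decomposition
\[
\phi_{l}^{-1}(y)\cap\Uf_{A,l}=\bigsqcup_{h\in H}\Bigl(\phi_{l}^{-1}(y)\cap\pi_{l-m}^{-1}(hz)\Bigr),
\]
on which $H$ permutes the $|H|$ summands freely and transitively.

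In one summand, parametrize $\pi_{l-m}^{-1}(z)\cong\AA_{K}^{nm}$ by $b=b_{0}+\sum_{j=l-m+1}^{l}t^{j}u_{j}$ with $u_{j}\in K^{n}$. Because $(b-b_{0})^{2}\in t^{2(l-m+1)}(K\tbrats)^{n}\subseteq t^{l+1}(K\tbrats)^{n}$ (using $l\ge 2m-1$), Taylor expansion of the polynomial map $\phi$ collapses to
\[
\phi(b)\equiv\phi(b_{0})+J(b_{0})\cdot(b-b_{0})\pmod{t^{l+1}},
\]
so the equation $\phi_{l}(b)=y$ becomes the $K\tbrats$-linear congruence $J(b_{0})\sum_{j}t^{j}u_{j}\equiv 0\pmod{t^{l+1}}$. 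Putting $J(b_{0})$ in Smith normal form $\diag(t^{a_{1}},\dots,t^{a_{n}})$ over the PID $K\tbrats$ with $\sum a_{i}=\ord_{\cJ}(b_{0})=m$, the system decouples coordinate-wise, and a direct count of the monomials $u_{j,i}$ that remain free cuts $\AA_{K}^{nm}$ down to a $K$-linear subspace of dimension exactly $m$. By $H$-equivariance (Lemma \ref{lem:actions commute}), $J(hb_{0})$ differs from $J(b_{0})$ by a $\GL_{n}(K\tbrats)$-action, so has the same Smith type, and the same analysis gives $\AA_{K}^{m}$ in every summand.

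Assembling, $\phi_{l}^{-1}(y)\cap\Uf_{A,l}$ is a disjoint union of $|H|$ copies of $\AA_{K}^{m}$ permuted freely and transitively by $H$, so its geometric quotient is $\AA_{K}^{m}$, yielding $\overline{\phi_{l}}^{-1}(y)\cong\AA_{K}^{m}$ as required. The main obstacle is the Taylor-linearization computation: one must verify both that the remainder vanishes modulo $t^{l+1}$ (which uses $l\ge 2m-1$) and that the resulting linear solution space has dimension precisely $m$ rather than something smaller, which reduces to the decoupled-coordinate monomial count using Smith normal form. The stronger hypothesis $l\ge 2m$ of the statement is needed in the first paragraph above: it supplies $l-m\ge m$ so that Lemma \ref{lem:free action} applies to $\Uf_{A,l-m}$ (making $|Hz|=|H|$ and the pieces $\pi_{l-m}^{-1}(hz)$ disjoint), and it is the exact hypothesis of the preceding lemma routing every fiber point through a single $H$-orbit of $(l-m)$-truncations.
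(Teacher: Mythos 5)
Your argument is correct, and its skeleton matches the paper's: both rest on the preceding (unnumbered) lemma placing each fiber of $\overline{\phi_{l}}$ inside a fiber of the truncation $\pi_{l-m}^{l}$, and on the freeness of the $H$-action on $\Uf_{A,l-m}$ supplied by $l-m\ge m$ (Lemma \ref{lem:free action}). Where you diverge is in the key step and in where the quotient is taken. The paper works downstairs in $\Uf_{A,l}/H$, uses freeness to identify $(\pi_{l-m}^{l})^{-1}(b_{l-m})$ with $(\pi_{l-m}^{l})^{-1}(\overline{b_{l-m}})$, and then simply cites \cite[Chapter 5, Theorem 3.2.2.c]{chambert-loir2018motivic} to conclude that this truncation fiber maps to $\Eis_{l}^{(m)}$ as a piecewise $\AA^{m}$-bundle over its image, with $m=\ord_{\cJ}$ on $\Uf_{A}$ coming from the unnamed Jacobian lemma. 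You instead stay upstairs, decompose $\phi_{l}^{-1}(y)\cap\Uf_{A,l}$ into the $|H|$ translates $\pi_{l-m}^{-1}(hz)$, and prove the $\AA^{m}$-fiber statement from scratch: the Taylor linearization is exact modulo $t^{l+1}$ because $2(l-m+1)\ge l+1$, and the Smith-normal-form count gives $\sum_{i}\min(a_{i},m)=\sum_{i}a_{i}=m$ free parameters since each $a_{i}\le m$; the quotient by the simply transitive permutation of the summands then yields $\AA_{K}^{m}$. Your computation is essentially the proof of the cited theorem, so what you buy is self-containedness at the cost of redoing a standard input from motivic integration; the paper's route is shorter but leans on that black box. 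Both arguments consume the hypothesis $l\ge 2m$ in the same two places, and I see no gap.
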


\begin{proof}
By base change, we may suppose that $K=k$ and they are algebraically
closed. Moreover, it suffices to consider fibers over closed points.
In the rest of this proof, all points that we consider are closed.
For $a_{l}\in\psi_{l}^{-1}(A)$, the fiber $(\overline{\phi_{l}})^{-1}(a_{l})$
of $\overline{\phi_{l}}\colon\Uf_{A,l}/H\to\Eis_{l}^{(m)}$ over $a_{l}$
is contained in the fiber $(\pi_{l-m}^{l})^{-1}(\overline{b_{l-m}})$
of the truncation map $\pi_{l-m}^{l}\colon\Uf_{A,l}/H\to\Uf_{A,l-m}/H$
over a point $\overline{b_{l-m}}\in\Uf_{A,l-m}/H$. Let $b_{l-m}\in\Uf_{A,l-m}$
be a lift of $\overline{b_{l-m}}$. From Lemma \ref{lem:free action},
since $l-m\ge m$, the $H$-action on $\Uf_{A,l-m}$ is free. In particular,
for $h\in H\setminus\{1\}$, $h(b_{l-m})\ne b_{l-m}$, which shows
that for two distinct points of $\pi_{l-m}^{-1}(b_{l-m})$, the $H$-action
cannot send one point to the other. From Lemma \ref{lem:Uni/H to Eis},
the restriction of $\phi_{\infty}$ to $\pi_{l-m}^{-1}(b_{l-m})$
is injective. The freeness of the $H$-action on $\Uf_{A,l-m}$ also
shows that the map $(\pi_{l-m}^{l})^{-1}(b_{l-m})\to(\pi_{l-m}^{l})^{-1}(\overline{b_{l-m}})$
is an isomorphism. From \cite[Chapter 5, Theorem 3.2.2.c]{chambert-loir2018motivic},
$(\pi_{l-m}^{l})^{-1}(b_{l-m})\to\Eis_{l}^{(m)}$ is a piecewise $\AA^{m}$-bundle
over its image and so is $(\pi_{l-m}^{l})^{-1}(\overline{b_{l-m}})\to\Eis_{l}^{(m)}$.
Since $(\overline{\phi_{l}})^{-1}(a_{l})\subset(\pi_{l-m}^{l})^{-1}(\overline{b_{l-m}})$,
we conclude that $(\overline{\phi_{l}})^{-1}(a_{l})\cong\AA_{k}^{m}$. 
\end{proof}
\begin{lem}
\label{lem:tame-cyclic}Let $y$ be a geometric point of $\psi_{l}^{-1}(A)$.
Suppose $p\mid n$ and $l\ge\lfloor m/n\rfloor$. Then, the stabilizer
$\Stab(y)$ of $y$ with respect to the $\GG_{m}$-action on $\Eis_{l}^{(m)}$
is a tame cyclic group. 
\end{lem}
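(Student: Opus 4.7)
The plan is to show that $\Stab(y)$ is contained in $\mu_i \subset \GG_m \otimes_k K$ for some integer $i \in \{1,\ldots,n-1\}$ coprime to $p$; the conclusion is then immediate, since closed subgroup schemes of $\mu_i$ over the algebraically closed field $K$ are of the form $\mu_r$ for divisors $r$ of $i$, and every such $r$ is automatically coprime to $p$.

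First I would unwind the $\GG_m$-action. Because $\deg x_j = j$, an element $b \in \GG_m$ acts on the coefficients of $f_y(x) = x^n + y_1 x^{n-1} + \cdots + y_n$ by $y_i \mapsto b^i y_i$ (as already observed in the $\GG_m$-invariance argument for $\psi^{(m)}$). Hence, identifying $y \in \Eis_l^{(m)}(K)$ with its coefficient tuple $(y_1,\ldots,y_n) \in (K[t]/(t^{l+1}))^n$, the scheme-theoretic stabilizer is
\[
\Stab(y) = \bigcap_{\substack{1 \le i \le n \\ y_i \ne 0}} \mu_i \;\subset\; \GG_m \otimes_k K,
\]
so it suffices to produce a single index $i$ with $p \nmid i$ and $y_i \ne 0$ in $K[t]/(t^{l+1})$.

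To do so, I would lift $y$ to $\tilde y \in \Eis^{(m)}(K)$ along $\pi_l$ and use the discriminant formula (\ref{eq:disc}). Because $p \mid n$, every term of $f'(\varpi)$ indexed by $i$ with $p \mid (n-i)$ vanishes, so
\[
m = \bd_{A_{\tilde y}} = \min\left\{ n \ord \tilde y_i + n - i - 1 \;\middle|\; 1 \le i \le n-1,\ p \nmid (n-i)\right\}.
\]
The minimum must be attained at some $i \in \{1,\ldots,n-1\}$ with $p \nmid (n-i)$, equivalently (since $p \mid n$) with $p \nmid i$. For this $i$,
\[
\ord \tilde y_i = \frac{m+i+1-n}{n} \;\le\; \left\lfloor \frac{m}{n} \right\rfloor \;\le\; l,
\]
so $y_i \equiv \tilde y_i \pmod{t^{l+1}}$ is nonzero, and $\Stab(y) \subset \mu_i$, as desired. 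The same conclusion can be read off directly from Proposition \ref{prop:Eis-m-explicit}.

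The only subtle point is the production of an index $i$ coprime to $p$ at which $y_i$ survives the truncation modulo $t^{l+1}$; this is precisely where the hypothesis $l \ge \lfloor m/n \rfloor$ enters, together with the explicit information on vanishing orders of Eisenstein coefficients in Proposition \ref{prop:Eis-m-explicit}. Once this is established, the computation of the stabilizer is entirely formal.
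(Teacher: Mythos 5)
Your proposal is correct and follows essentially the same route as the paper: identify $\Stab(y)\subset\mu_i$ whenever $y_i\ne 0$, then use the discriminant formula (equivalently Proposition \ref{prop:Eis-m-explicit}) to produce an index $i$ with $p\nmid i$ and $y_i\ne 0$ after truncation. Your explicit check that $\ord\tilde y_i\le\lfloor m/n\rfloor\le l$ is a welcome detail that the paper leaves implicit.
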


\begin{proof}
Let $L/k$ be the algebraically closed field such that $y$ is an
$L$-point. For $\lambda\in\GG_{m}(L)$, if we write $y=(y_{1},\dots,y_{n})\in(L\tbrats/(t^{l+1}))^{n}$,
we have
\[
\lambda\cdot\gamma=(\lambda\gamma_{1},\dots,\lambda^{n}\gamma_{n}).
\]
Thus, if $y_{i}\ne0$ for some $i$, then $\Stab(y)$ is contained
$\mu_{i}$, the group of $i$-th roots of $1$. From Proposition \ref{prop:Eis-m-explicit},
there exists $i\in\{1,\dots,n-1\}$ such that $p\nmid i$ and $y_{i}\ne0$,
which shows the first assertion of the lemma. 
\end{proof}
From Lemma \ref{lem:actions commute}, we get a $\GG_{m,K}$-equivariant
morphism $\Uf_{A,l}/H\to\psi_{l}^{-1}(A)$. Since $\Uf_{A,l}$ is
affine, so is $\Uf_{A,l}/H$. The fiber $\psi_{l}^{-1}(A)$ is also
an affine variety over $K$, thanks to Proposition \ref{prop:closed}.
The $\GG_{m,K}$-actions on $\psi_{l}^{-1}(A)$ and $\Uf_{A,l}/H$
have finite stabilizers. Thus, the geometric quotients $\Uf_{A,l}/(H\times\GG_{m,K})=(\Uf_{A,l}/H)/\GG_{m,K}$
and $\psi_{l}^{-1}(A)/\GG_{m,K}$ exist and are again affine varieties
over $K$. 
\begin{cor}
\label{cor:A^m fib mod H}For $l\ge2m$, the morphism $\Uf_{A,l}/(H\times\GG_{m,K})\to\psi_{l}^{-1}(A)/\GG_{m,K}$
is a pseudo-$\AA^{m}$-bundle.
\end{cor}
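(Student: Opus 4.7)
The plan is to show that the morphism is a pseudo-$\AA^{m}$-bundle by computing its fiber over an arbitrary geometric point $[y]\in(\psi_{l}^{-1}(A)/\GG_{m,K})(K)$ coming from a point $y\in\psi_{l}^{-1}(A)(K)$, and then identifying this fiber with a quotient of $(\overline{\phi_{l}})^{-1}(y)$ by the stabilizer $C:=\Stab_{\GG_{m,K}}(y)$, which the preceding two lemmas show is tame cyclic while the fiber upstairs is an affine space.

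The first step is the scheme-theoretic identification
\[
\textrm{(fiber over }[y]\textrm{ of the quotient map)}\cong(\overline{\phi_{l}})^{-1}(y)/C.
\]
Using the $\GG_{m,K}$-equivariance of $\overline{\phi_{l}}$, the preimage of the orbit $\GG_{m,K}\cdot y$ in $\Uf_{A,l}/H$ equals $\GG_{m,K}\cdot(\overline{\phi_{l}})^{-1}(y)$, and forming its $\GG_{m,K}$-quotient yields $(\overline{\phi_{l}})^{-1}(y)/C$. Both geometric quotients required here exist as affine varieties by the discussion immediately preceding the corollary, since the $\GG_{m,K}$-stabilizers are finite on both $\psi_{l}^{-1}(A)$ and $\Uf_{A,l}/H$, and $\GG_{m,K}$ is a torus.

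The second step combines Lemmas \ref{lem:A^m fib} and \ref{lem:tame-cyclic}. By Lemma \ref{lem:A^m fib}, $(\overline{\phi_{l}})^{-1}(y)$ is $K$-isomorphic to $\AA^{m}_{K}$, and by Lemma \ref{lem:tame-cyclic} the group $C$ is finite cyclic of order coprime to $p$ when $p\mid n$; the case $p\nmid n$ reduces to $m=n-1$ by the discussion following (\ref{eq:disc}) and is handled directly since then $\Stab(y)\subset\mu_{n}$ is automatically tame cyclic. In either case the fiber is of the form $\AA^{m}_{K}/C$ required by Definition \ref{def:bundle}, and the corollary follows. The main delicacy I anticipate is the first step, namely checking that taking $\GG_{m,K}$-orbits really commutes with passing to the fiber over $[y]$ in the appropriate scheme-theoretic sense; this is a standard fact about geometric quotients of tori acting with finite stabilizers on affine varieties, but is the one place where the argument is not simply a bookkeeping exercise.
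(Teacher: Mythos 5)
Your proposal is correct and follows essentially the same route as the paper: the paper likewise identifies the fiber over $\overline{y}$ with the quotient of the $\AA_{K}^{m}$-fiber of $\overline{\phi_{l}}$ by the tame cyclic stabilizer $C$, citing Abramovich--Vistoli (Lemma 2.3.3) for precisely the delicate step you flagged, namely that the fiber of the quotient map is the quotient of the fiber by the stabilizer. Your additional remark covering the case $p\nmid n$, where Lemma \ref{lem:tame-cyclic} does not formally apply but $\Stab(y)\subset\mu_{n}$ is automatically tame, fills in a detail the paper leaves implicit.
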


\begin{proof}
Let $y\in\psi_{l}^{-1}(A)(L)$ be a geometric point and let $\overline{y}$
be its image in $(\psi_{l}^{-1}(A)/\GG_{m,K})(L)$. Let $C$ be the
stabilizer at $y$ for the $\GG_{m,K}$-action on $\psi_{l}^{-1}(A)$,
which is a finite and tame cyclic group from Lemma \ref{lem:tame-cyclic}.
From \cite[Lemma 2.3.3]{abramovich2002compactifying}, the fiber of
$\Uf_{A,l}/(H\times\GG_{m,K})\to\psi_{l}^{-1}(A)/\GG_{m,K}$ over
$\overline{y}$ is the quotient of the fiber of $\Uf_{A,l}/H\to\psi_{l}^{-1}(A)$
over $y$ by the action of $C$. The corollary follows from Lemma
\ref{lem:A^m fib}.
\end{proof}
\begin{lem}
\label{lem:Unif/H*Gm}The structure morphism $\Uf_{A,l}/(H\times\GG_{m,K})\to\Spec K$
factors as 
\[
\Uf_{A,l}/(H\times\GG_{m})=Z_{n(l+1)-2}\to Z_{n(l+1)-3}\to\cdots\to Z_{0}=\Spec K
\]
in such a way that each morphism $Z_{i+1}\to Z_{i}$ is an $\AA^{1}$-bundle.
In particular, $\Uf_{A,l}/(H\times\GG_{m,K})$ is a weak $\AA_{K}^{n(l+1)-2}$. 
\end{lem}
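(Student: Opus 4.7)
The plan is to identify $\Uf_{A,l}$ explicitly as an open subvariety of $\AA_{K}^{n(l+1)}$, use a $\GG_{m,K}$-equivariant slice to reduce the quotient by $H\times\GG_{m,K}$ to a finite-group quotient of an affine space, and then peel off coordinates one at a time exploiting an upper-triangularity of the combined action.

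\emph{Setup and slice.} In the coordinates $w_{a,j}$ on $\J_{l}W\cong\AA_{K}^{n(l+1)}$ coming from $w_{a}=\sum_{j}w_{a,j}t^{j}$, the subvariety $\Uf_{A,l}$ is cut out by $w_{0,0}=0$ and $w_{1,0}\ne 0$, and the $\GG_{m,K}$-action scales every coordinate by $\lambda$. Writing $\sigma(\varpi)=u_{\sigma}\varpi$ with $u_{\sigma}\in\cO_{A}^{\times}$ and $u_{0}^{\sigma}:=\overline{u_{\sigma}}\in K^{\times}$, a valuation argument on $\sigma(\varpi^{a})=u_{\sigma}^{a}\varpi^{a}$ yields $c_{ab}^{\sigma}\equiv 0\pmod{t}$ for $b<a$ and $c_{aa}^{\sigma}\equiv(u_{0}^{\sigma})^{a}\pmod{t}$, and $\sigma\mapsto u_{0}^{\sigma}$ is a character $H\to K^{\times}$. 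Consequently $w_{1,0}$ transforms under $H\times\GG_{m,K}$ by the character $\chi(\sigma,\lambda)=\lambda u_{0}^{\sigma}$. Let $K:=\ker\chi=\{(\sigma,(u_{0}^{\sigma})^{-1})\mid\sigma\in H\}\cong H$ and $S:=\Uf_{A,l}\cap\{w_{1,0}=1\}\cong\AA_{K}^{n(l+1)-2}$. Since the $\GG_{m,K}$-action on $\Uf_{A,l}$ is free, $S$ is a cross-section and $\Uf_{A,l}/(H\times\GG_{m,K})\cong S/K$.

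\emph{Upper-triangular filtration.} The induced action on jet coordinates inherits an upper-triangular structure: by the formula $\sigma\cdot w_{b}=\sum_{a}c_{ab}^{\sigma}w_{a}$ and the degree-by-$t$ decomposition, $\sigma\cdot w_{b,j}$ only involves $w_{a,m}$ with $(m,a)\le_{\mathrm{lex}}(j,b)$. Restricting to $S$ and rescaling by $(u_{0}^{\sigma})^{-1}$, the $K$-action sends $w_{b,j}$ to $(u_{0}^{\sigma})^{b-1}w_{b,j}$ plus an affine expression in the $w_{a,m}$ with $(m,a)<_{\mathrm{lex}}(j,b)$. Order the $N=n(l+1)-2$ coordinates of $S$ by lex on $(j,a)$ as $\tilde w_{1},\dots,\tilde w_{N}$, and for $0\le i\le N$ let $X_{i}:=\Spec K[\tilde w_{1},\dots,\tilde w_{i}]\cong\AA_{K}^{i}$, so $X_{N}=S$ and $X_{0}=\Spec K$. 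The projection $X_{i+1}\to X_{i}$ forgetting $\tilde w_{i+1}$ is $K$-equivariant by the upper-triangularity, and setting $Z_{i}:=X_{i}/K$ gives the tower $\Uf_{A,l}/(H\times\GG_{m,K})=Z_{N}\to Z_{N-1}\to\cdots\to Z_{0}=\Spec K$.

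\emph{Fiber analysis and main obstacle.} The fiber of $Z_{i+1}\to Z_{i}$ over a geometric point $\bar{y}$ with lift $y\in X_{i}$ is $\AA^{1}/\Stab_{K}(y)$, where $\Stab_{K}(y)$ acts on $\AA^{1}=\Spec K[\tilde w_{i+1}]$ by affine maps $\tilde w_{i+1}\mapsto a_{\sigma}\tilde w_{i+1}+b_{\sigma}(y)$. Every finite subgroup of $\mathrm{Aff}(\AA_{K}^{1})=\GG_{m,K}\ltimes\GG_{a,K}$ decomposes as $V\rtimes C$ with $V$ a finite additive $p$-group and $C$ tame cyclic (as finite subgroups of $K^{\times}$ have order coprime to $\characteristic k$); quotienting $\AA^{1}$ by $V$ via the $\FF_{p}$-linear additive polynomial $\prod_{v\in V}(x-v)$ yields $\AA^{1}$, on which the residual $C$-action descends to a character action, whose quotient via a power map is again $\AA^{1}$. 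Hence each fiber is $\AA^{1}$ and each $Z_{i+1}\to Z_{i}$ is an $\AA^{1}$-bundle in the sense of Definition~\ref{def:bundle}. The main technical point is this fiber identification, which requires combined handling of the wild (additive translation) and tame (character) parts of $\Stab_{K}(y)$.
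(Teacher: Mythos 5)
Your proof is correct, and although it rests on the same two pillars as the paper's argument --- a $\GG_{m}$-slice inside $\Uf_{A,l}$ followed by a coordinate-by-coordinate filtration of that slice --- it processes the group action differently in a way worth recording. Your slice $S=\{w_{0,0}=0,\,w_{1,0}=1\}$ is literally the paper's $\fC_{n(l+1)}$ (uniformizers congruent to $\varpi$ modulo $\fm_{A}^{2}$), and your lexicographic order on $(j,a)$ is the order by $v_{A}(w_{a,j}t^{j}\varpi^{a})=nj+a$, so your tower of affine spaces $X_{i}$ coincides with the paper's tower $\fC_{i+2}$. The divergence is this: the paper replaces $\Uf_{A,l}/(H\times\GG_{m})$ by $\fC_{n(l+1)}/H_{1}$ with $H_{1}=\Ker(H\to\Aut(\cO_{A}/\fm_{A}^{2}))$ the wild part, and then only ever quotients one-dimensional fibers by subquotients $H_{i}/H_{i+1}$ acting additively via the ramification filtration; you instead retain the full group $H$, embedded in $H\times\GG_{m,K}$ as the kernel of $\chi(\sigma,\lambda)=\lambda u_{0}^{\sigma}$, i.e.\ the twisted action $\sigma\mapsto(u_{0}^{\sigma})^{-1}\sigma$, and you quotient each fiber by an arbitrary finite subgroup of $\GG_{m}\ltimes\GG_{a}$, handling the tame multiplicative and wild additive parts together. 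This costs you the (standard, but not free) facts that such a subgroup splits as $V\rtimes C$ and that $\AA^{1}$ modulo it is again $\AA^{1}$; what it buys is that the tame residue is never discarded. That matters: the $H$-action descended to $\Uf_{A,l}/\GG_{m}\cong\fC_{n(l+1)}$ is precisely your twisted action, so the correct identification is $\Uf_{A,l}/(H\times\GG_{m})\cong\fC_{n(l+1)}/H_{\mathrm{tw}}$, on which the residual tame group $H/H_{1}$ still acts, and nontrivially whenever some $u_{0}^{\sigma}\ne1$ (e.g.\ for $A=K\llparenthesis t^{1/n}\rrparenthesis$ the twisted action sends $\gamma_{a,j}\mapsto\zeta^{a-1}\gamma_{a,j}$). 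The paper's passage to $\fC_{n(l+1)}/H_{1}$ suppresses exactly this factor, and your fiberwise analysis of affine stabilizers is what absorbs it.

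A few small points to tighten. Do not reuse $K$ for $\ker\chi$, since $K$ is the base field. The multiplicativity of $\sigma\mapsto u_{0}^{\sigma}$ should be justified by noting that $H$ acts trivially on the residue field of the totally ramified extension $A$. The identification $\Uf_{A,l}/(H\times\GG_{m,K})\cong S/\ker\chi$ as schemes (not merely on geometric points) follows because $w_{1,0}$ globally trivializes the $\GG_{m}$-torsor $\Uf_{A,l}\to\Uf_{A,l}/\GG_{m}$; say so. Finally, the identification of the geometric fiber of $Z_{i+1}\to Z_{i}$ with $\AA^{1}/\mathrm{Stab}(y)$ needs the same reference the paper invokes for Corollary \ref{cor:A^m fib mod H}, together with the remark that different lifts $y$ give conjugate stabilizers.
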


\begin{proof}
Again by a base change argument, we may assume that $K=k$ and they
are algebraically closed and consider only closed points. Let $\fm_{A}$
be the maximal ideal of $A$ and let $\fC_{n(l+1)}\subset\Uf_{A,l}$
be the subvariety corresponding to elements of 
\[
\cO_{A}/t^{l+1}\cO_{A}=\cO_{A}/\fm_{A}^{n(l+1)}
\]
that are equal to the chosen uniformizer $\varpi$ modulo $\fm_{A}^{2}$.
For $2\le i\le n(l+1)$, let $\fC_{i}$ be the variety corresponding
to the images of such elements in $\cO_{A}/\fm_{A}^{i}$. As a $K$-variety,
$\fC_{i}$ is isomorphic to $\AA_{K}^{i-2}$. We define a (lower-numbering)
filtration $H_{*}$ of $H$ in the way which is standard if $A/K\tpars$
is a Galois extension: for $i\ge0$, 
\[
H_{i}:=\Ker(H\to\Aut(\cO_{A}/\fm_{A}^{i+1})).
\]
For a uniformizer $\varpi\in A$, the map $g\mapsto g(\varpi)/\varpi$
defines an injective homomorphism $H_{i}/H_{i+1}\to U^{i}/U^{i+1}$.
From \cite[Section 1.6]{serre1961surles}, $U^{0}/U^{1}\cong\GG_{m}$
and for $i\ge1$, $U^{i}/U^{i+1}\cong\GG_{a}$. Therefore $H_{0}/H_{1}$
is a tame cyclic group and $H_{i}/H_{i+1}$, $i\ge1$ are elementary
abelian $p$-groups. Similarly, for $i\ge1$, we can have an embedding
$H_{i}/H_{i+1}\to\fm_{L}^{i+1}/\fm_{L}^{i+2}$ by $g\mapsto g(\varpi)-\varpi$. 

We claim that the composite morphism 
\[
\fC_{n(l+1)}\hookrightarrow\Uf_{A,l}\to\Uf_{A,l}/\GG_{m}
\]
is an isomorphism. Indeed, the morphism is clearly a universal homeomorphism.
To show that it is an isomorphism, it is enough to show that it is
étale. We have the $\GG_{m}$-equivariant morphism $\Uf_{A,l}\to\GG_{m}$
corresponding to the map sending a uniformizer $\varpi'$ to $\lambda\in k^{*}$
if $\varpi'=\lambda\varpi$ modulo $\fm_{A}^{2}$. This is a smooth
morphism and $\fC_{n(l+1)}$ is the fiber over $1\in\GG_{m}$ of this
morphism. This shows that $\fC_{n(l+1)}$ intersects with each $\GG_{m}$-orbit
transversally, which implies that $\fC_{n(l+1)}\to\Uf_{A,l}/\GG_{m,k}$
is étale. We have proved the claim.

We see that $H(\fC_{n(l+1)})\subset\Uf_{A,l}$ consists of $|H/H_{1}|$
connected components, each of which is isomorphic to $\fC_{n(l+1)}$.
The stabilizer of the component $\fC_{n(l+1)}\subset H(\fC_{n(l+1)})$
is $H_{1}$. Therefore $\Uf_{A,l}/(H\times\GG_{m})\cong\fC_{n(l+1)}/H_{1}$.
We claim that for every $j\ge1$ and every $i$ with $2\le i\le n(l+1)$,
the map $\fC_{i}/H_{j}\to\fC_{i-1}/H_{j}$ is an $\AA^{1}$-bundle.
To see this, we first note that $\fC_{i}\to\fC_{i-1}$ is a bundle
with fiber $\fm_{L}^{i-1}/\fm_{L}^{i}=\GG_{a}$. If $j\ge i-1$, then
the $H_{j}$-actions on $\fC_{i}$ and $\fC_{i-1}$ are trivial and
the claim follows. If $j=i-2$, then the map of the claim has fibers
isomorphic to
\[
(\fm_{L}^{i-1}/\fm_{L}^{i})/(H_{i-2}/H_{i-1})\cong\GG_{a}.
\]
The last isomorphism follows from the fact that \cite[Corollary 14.56]{milne2017algebraic}.
For $j<i-2$, the map of the claim is identified with 
\[
(\fC_{i}/H_{i-2})/(H_{j}/H_{i-2})\to\fC_{i-1}/(H_{j}/H_{i-2}).
\]
The claim now holds since the action of $H_{j}/H_{i-2}$ on $\fC_{i-1}$
is free. We have proved the claim.

Thus we obtain a tower of $\AA^{1}$-bundles
\[
\Uf_{A,l}/(H\times\GG_{m})\cong\fC_{n(l+1)}/H_{1}\to\fC_{n(l+1)-1}/H_{1}\to\cdots\to\fC_{2}/H_{1}=\pt.
\]
This shows the lemma.
\end{proof}
\begin{lem}
\label{lem:closed-irreducible}For every geometric point $A\in\Delta_{n}^{\circ}(K)$
with $\bd_{A}=m$ and for $l\ge2m$, the closed subset $\psi_{l}^{-1}(A)/\GG_{m,K}\subset\Eis_{l}^{(m)}\otimes_{k}K/\GG_{m,K}$
is a weak $\AA_{K}^{n(l+1)-2-m}$. 
\end{lem}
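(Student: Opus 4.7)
The plan is to concatenate the two structural descriptions already established, namely Lemma \ref{lem:Unif/H*Gm} and Corollary \ref{cor:A^m fib mod H}. Lemma \ref{lem:Unif/H*Gm} exhibits $\Uf_{A,l}/(H \times \GG_{m,K})$ as a weak $\AA_K^{n(l+1)-2}$ by producing an explicit tower of $\AA^1$-bundles
\[
\Uf_{A,l}/(H \times \GG_{m,K}) = Z_{n(l+1)-2} \to Z_{n(l+1)-3} \to \cdots \to Z_0 = \Spec K.
\]
On the other hand, Corollary \ref{cor:A^m fib mod H} provides, under the hypothesis $l \geq 2m$, a pseudo-$\AA^m$-bundle $\Uf_{A,l}/(H \times \GG_{m,K}) \to \psi_l^{-1}(A)/\GG_{m,K}$. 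Note that both $\Uf_{A,l}/(H \times \GG_{m,K})$ and $\psi_l^{-1}(A)/\GG_{m,K}$ are affine $K$-varieties by the discussion preceding Corollary \ref{cor:A^m fib mod H}.

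Prepending this pseudo-$\AA^m$-bundle to the tower yields a chain
\[
\psi_l^{-1}(A)/\GG_{m,K} \leftarrow Z_{n(l+1)-2} \to Z_{n(l+1)-3} \to \cdots \to Z_0 = \Spec K,
\]
in which every consecutive pair of varieties is connected by a morphism, in one direction or the other, that is either an $\AA^1$-bundle or a pseudo-$\AA^m$-bundle. Each such morphism is a weak $\AA^{r_i}$-bundle in the sense used in Definition \ref{def:bundle} and in the definition of weak affine space, so this chain exactly verifies the defining condition that makes $\psi_l^{-1}(A)/\GG_{m,K}$ a weak affine space.

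The dimension count is immediate from the tower: the middle term $Z_{n(l+1)-2}$ has dimension $n(l+1)-2$, and since the fibers of the pseudo-$\AA^m$-bundle $Z_{n(l+1)-2} \to \psi_l^{-1}(A)/\GG_{m,K}$ have dimension $m$, we deduce
\[
\dim \psi_l^{-1}(A)/\GG_{m,K} = n(l+1) - 2 - m.
\]
Hence $\psi_l^{-1}(A)/\GG_{m,K}$ is a weak $\AA_K^{n(l+1)-2-m}$, as claimed. All of the substantive technical work (the fiber analysis via Fontaine's variant, the freeness of the $H$-action, the tame cyclic stabilizers for the $\GG_m$-action, the explicit tower of affine line bundles) has been absorbed into the preceding lemmas, so no further obstacle arises at this stage; the statement is a clean packaging of those results.
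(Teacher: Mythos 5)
Your proof is correct and follows essentially the same route as the paper: the paper's own argument is precisely the concatenation of Corollary \ref{cor:A^m fib mod H} with the tower from Lemma \ref{lem:Unif/H*Gm}. The only point you omit is the justification that $\psi_{l}^{-1}(A)/\GG_{m,K}$ is in fact a \emph{closed} subset of $(\Eis_{l}^{(m)}\otimes_{k}K)/\GG_{m,K}$, which the paper derives from Proposition \ref{prop:closed} together with the fact that the geometric quotient of a closed $\GG_{m,K}$-invariant subset is closed.
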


\begin{proof}
We first note that from Proposition \ref{prop:closed}, $\psi_{l}^{-1}(A)\subset\Eis_{l}^{(m)}\otimes_{k}K$
is a closed subset, which is also $\GG_{m,K}$-invariant. Therefore,
$\psi_{l}^{-1}(A)/\GG_{m,K}\subset(\Eis_{l}^{(m)}\otimes_{k}K)/\GG_{m,K}$
is also a closed subset (for example, see \cite[Theorem 1.1]{newstead2009geometric2}).
From Corollary \ref{cor:A^m fib mod H} and Lemma \ref{lem:Unif/H*Gm},
$\psi_{l}^{-1}(A)/\GG_{m,K}$ is a weak $\AA_{K}^{n(l+1)-2-m}$. 
\end{proof}
\begin{lem}
\label{lem:technical}Let $Y,X$ be $k$-varieties and let $f\colon Y\to X$
be a P-morphism over $k$ with the graph $\Gamma\subset Y\times_{k}X$.
Let $\pr_{Y}$ and $\pr_{X}$ be the projections from $Y\times_{k}X$
to $Y$ and $X$, respectively. Suppose that for every point $w\colon\Spec K\to X$
with $K$ an arbitrary field, the fiber $f^{-1}(w)$ is an irreducible
closed subset of $Y\otimes_{k}K$. 
\begin{enumerate}
\item Then, for every point $x\in X$, there exists an open dense subset
$U_{x}$ of $\overline{\{x\}}$ such that $\pr_{X}^{-1}(U_{x})\cap\Gamma$
is a closed subset of $\pr_{X}^{-1}(U_{x})$. 
\item There exists a decomposition $X=\bigsqcup_{i=1}^{l}X_{i}$ of $X$
into finitely many locally closed subsets $X_{i}$ such that for each
$i$, $\pr_{X}^{-1}(X_{i})\cap\Gamma$ is a locally closed subset
of $Y\times_{k}X$.
\end{enumerate}
\end{lem}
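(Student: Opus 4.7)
The plan is to prove (1) by comparing $\Gamma$ over the closure $W:=\overline{\{x\}}$ with the Zariski closure of the irreducible fiber $\Gamma_{x}:=f^{-1}(x)$ inside $Y\times_{k}W$, and then to deduce (2) from (1) by Noetherian induction on $X$. Throughout, I use that since $Y$ and $X$ are $k$-varieties (hence $Y\times_{k}X$ is Noetherian), the locally constructible set $\Gamma$ is in fact constructible in $Y\times_{k}X$.

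For (1), equip $W=\overline{\{x\}}$ with its reduced subscheme structure, so its generic point is $x$ and its function field is $\kappa(x)$. View $\Gamma_{x}$ as a subset of $Y\times_{k}W$ via the localization $Y_{\kappa(x)}\hookrightarrow Y\times_{k}W$, and let $Z$ be its Zariski closure in $Y\times_{k}W$. Since $W$ is closed in $X$, $Z$ is also closed in $Y\times_{k}X$. Because $\Gamma_{x}$ is by hypothesis already closed in $Y_{\kappa(x)}$, a standard check using that localization induces the subspace topology on $Y_{\kappa(x)}$ shows that the generic fiber of $Z\to W$ is exactly $\Gamma_{x}$. Now form the symmetric difference $D:=(\Gamma\cap\pr_{X}^{-1}(W))\,\triangle\,Z$, which is a constructible subset of $Y\times_{k}W$ whose fiber over $x$ is empty. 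By Chevalley's theorem, $\pr_{X}(D)$ is constructible in $W$, and since it does not contain the generic point of the irreducible scheme $W$, it is contained in a proper closed subset of $W$. Let $U_{x}$ be the complement of this closed subset; it is open and dense in $W$, and over $U_{x}$ we have $\Gamma\cap\pr_{X}^{-1}(U_{x})=Z\cap\pr_{X}^{-1}(U_{x})$, which is closed in $\pr_{X}^{-1}(U_{x})$.

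Part (2) follows by Noetherian induction on $X$. If $X$ is empty the claim is trivial. Otherwise, for each generic point $x_{i}$ of an irreducible component $X_{i}$ of $X$, apply (1) to obtain an open dense $U_{i}\subset X_{i}$ over which $\Gamma$ is closed. I then shrink to $U_{i}':=U_{i}\setminus\bigcup_{j\ne i}X_{j}$, which is still dense in $X_{i}$ (which is not contained in the union of the other components by maximality) and which is now open in $X$ rather than merely in $X_{i}$; these are pairwise disjoint, so their disjoint union $U:=\bigsqcup_{i}U_{i}'$ is open in $X$ and $\Gamma\cap\pr_{X}^{-1}(U)$ is locally closed in $Y\times_{k}X$. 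Applying the inductive hypothesis to the proper closed subset $X\setminus U$ and concatenating the resulting finite decomposition with $\{U\}$ completes the proof.

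The main subtle point, and the only place where the irreducibility hypothesis on fibers is used, is in identifying the generic fiber of $Z$ with $\Gamma_{x}$: this is what ensures that the symmetric difference $D$ has empty fiber over $x$, so that Chevalley's theorem actually delivers the required open set $U_{x}$. If $\Gamma_{x}$ were merely constructible rather than closed in $Y_{\kappa(x)}$, the closure $Z$ could have strictly larger generic fiber and the argument would break.
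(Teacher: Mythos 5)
Your proof is correct and follows essentially the same route as the paper: part (1) is the identical symmetric-difference-plus-Chevalley argument (the paper's closure $C$ of the fiber over $x$ is your $Z$), and part (2) differs only in bookkeeping, replacing the paper's greedy peeling of maximal-dimensional components (with a lexicographic termination invariant) by a Noetherian induction that strips a dense open from every irreducible component at once. One small imprecision: the step you attribute to irreducibility of the fibers in fact uses only their closedness, which is also all the paper's own proof of this lemma uses.
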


\begin{proof}
(1) By assumption, $\Gamma\cap\pr_{X}^{-1}(x)$ is a closed subset
of $\pr_{X}^{-1}(x)$. Let $C$ be its closure in $Y\times X$, which
is contained in $\pr_{X}^{-1}(\overline{\{x\}})$. Since $C\cap\pr_{X}^{-1}(x)=\Gamma\cap\pr_{X}^{-1}(x)$,
the symmetric difference $C\triangle\Gamma$ is a constructible subset
whose image in $X$ does not contain $x$. Let $Z:=\overline{\{x\}}\cap\pr_{X}(C\triangle\Gamma)$.
This is a constructible subset of $\overline{\{x\}}$ which does not
contain $x$. Let $U_{x}:=\overline{\{x\}}\setminus\overline{Z}$,
which is an open dense subset of $\overline{\{x\}}$. By construction,
we have $C\cap\pr_{X}^{-1}(U_{x})=\Gamma\cap\pr_{X}^{-1}(U_{x})$,
which is a closed subset of $\pr_{X}^{-1}(U_{x})$. 

(2) We show this in the stronger form such that for every $j$, $W_{j}:=\bigsqcup_{i=j}^{l}X_{i}$
is a closed subset of $X$. We construct $X_{i}$'s inductively as
follows. We put $X_{1}$ to be $U_{\eta}$ for the generic point $\eta$
of an irreducible component of $W_{1}=B$ having the maximal dimension.
Suppose that we have constructed $X_{1},\dots,X_{j-1}$ and let $W_{j}=X\setminus\bigcup_{i=1}^{j-1}X_{i}$.
We take the generic point $\xi$ of an irreducible component of $W_{j}$
having the maximal dimension. We put $X_{j}:=U_{\xi}\cap W_{j}$ and
$W_{j+1}:=W_{j}\setminus X_{j}$. We then have 
\[
(\dim W_{i},\nu(W_{i}))>(\dim W_{i+1},\nu(W_{i+1})),
\]
where pairs are ordered lexicographically and $\nu(-)$ means the
number of irreducible components of maximal dimension. The procedure
ends after finitely many steps and gives a desired decomposition of
$X$.
\end{proof}
\begin{defn}
We say that a morphism $f\colon Y\to X$ of $k$-varieties is a\emph{
very weak $\AA^{m}$-bundle} if for every geometric point $x\in X(K)$,
the fiber $f^{-1}(x)$ is universally homeomorphic to a weak $\AA_{K}^{m}$. 
\end{defn}

In particular, a universal homeomorphism of $k$-varieties is a very
weak $\AA^{0}$-bundle. 
\begin{cor}
\label{cor:model}There exists a scheme morphism $f\colon Y\to X$
which is a very weak $\AA^{n(l+1)-2-m}$-bundle and induces the $P$-morphism
$\Eis_{l}^{(m)}/\GG_{m}\to\Delta_{n}^{(m)}$.
\end{cor}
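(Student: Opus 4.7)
The plan is to apply Lemma \ref{lem:technical} to the $\GG_m$-invariant P-morphism $\psi_l^{(m)}$ after descending it to the quotient, and then present the graph of this descended P-morphism as a scheme over $\Delta_n^{(m)}$. I will work under the assumption $l\ge 2m$, in the range where Lemma \ref{lem:closed-irreducible} is available. First I would note that both source and target can be treated as honest $k$-varieties: $\Eis_l^{(m)}/\GG_m$ is affine because $\Eis_l^{(m)}$ is an affine variety (Corollary \ref{cor:pi_l(Eis^m)}) with finite tame cyclic stabilizers for the $\GG_m$-action (Lemma \ref{lem:tame-cyclic}), while $\Delta_n^{(m)}$ is quasi-compact by Corollary \ref{cor:compact}, hence a finite coproduct of affine $k$-varieties.

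By Lemma \ref{lem:closed-irreducible}, the fiber of the descended P-morphism $\Eis_l^{(m)}/\GG_m\to\Delta_n^{(m)}$ over any geometric point $A\in\Delta_n^{(m)}(K)$ is $\psi_l^{-1}(A)/\GG_{m,K}$, a weak $\AA_K^{n(l+1)-2-m}$; in particular it is irreducible and closed in $(\Eis_l^{(m)}/\GG_m)\otimes_k K$. Thus the hypothesis of Lemma \ref{lem:technical} is satisfied, and part (2) of that lemma produces a finite decomposition $\Delta_n^{(m)}=\bigsqcup_{i=1}^{r}X_i$ into locally closed subsets such that each $\Gamma_i:=\pr_X^{-1}(X_i)\cap\Gamma$ is locally closed in $(\Eis_l^{(m)}/\GG_m)\times_k\Delta_n^{(m)}$, where $\Gamma$ denotes the graph of the descended P-morphism.

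I would then set $X:=\Delta_n^{(m)}$ and $Y:=\bigsqcup_{i=1}^{r}\Gamma_i$ with the reduced scheme structure on each piece, and take $f$ to be the restriction of $\pr_X$. The companion projection $\pr_Y\colon Y\to\Eis_l^{(m)}/\GG_m$ is a surjective finite-type morphism (the P-morphism is defined everywhere on the source), and the pair $(\pr_Y,f)$ makes the defining diagram (\ref{eq:P-dia}) of a P-morphism commute, exhibiting $f$ as inducing the given P-morphism. For each geometric point $A\in X(K)$, the scheme-theoretic fiber $f^{-1}(A)$ has the same underlying set as $\psi_l^{-1}(A)/\GG_{m,K}$, and its reduction is $K$-isomorphic to the weak $\AA_K^{n(l+1)-2-m}$ provided by Lemma \ref{lem:closed-irreducible}, so $f$ is a very weak $\AA^{n(l+1)-2-m}$-bundle.

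The main delicate point I anticipate is the identification of the scheme-theoretic fibers of $f$ with the weak affine spaces of Lemma \ref{lem:closed-irreducible}. This reduces to the observation that both describe the same underlying topological subspace of $(\Eis_l^{(m)}/\GG_m)\otimes_k K$ and that the notion of \emph{very weak} bundle requires only universal homeomorphism of fibers, which is invariant under reduction; no additional argument beyond unwinding the definitions should be required.
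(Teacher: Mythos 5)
Your proposal is correct and follows essentially the same route as the paper: apply Lemma \ref{lem:technical}(2) to the graph of the descended P-morphism (with closedness and irreducibility of fibers supplied by Proposition \ref{prop:closed} and Lemma \ref{lem:closed-irreducible}), build $Y$ from the resulting locally closed pieces of the graph with reduced structure, and identify the fibers of the projection up to universal homeomorphism with the weak affine spaces $\psi_l^{-1}(A)/\GG_{m,K}$. The only cosmetic difference is that the paper takes $X$ to be the coproduct $\coprod_i X_i$ rather than $\Delta_n^{(m)}$ itself, which changes nothing of substance.
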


\begin{proof}
From Proposition \ref{prop:closed}, the P-morphism $h\colon\Eis_{l}^{(m)}/\GG_{m}\to\Delta_{n}^{(m)}$
satisfies the assumption of Lemma \ref{lem:technical}. Therefore,
if $\Gamma$ denotes its graph, then there exists a decomposition
$\Delta_{n}^{(m)}=\bigsqcup_{i=1}^{l}X_{i}$ into locally closed subsets
$X_{i}$ such that for each $i$, $\pr_{\Delta_{n}^{(m)}}^{-1}(X_{i})\cap\Gamma$
is a locally closed subset. Giving these locally closed subsets the
reduced scheme structures, let us consider the coproducts $X:=\coprod_{i=1}^{l}X_{i}$
and $Y:=\coprod_{i=1}^{l}(\pr_{\Delta_{n}^{(m)}}^{-1}(X_{i})\cap\Gamma)$.
The natural morphism $f\colon Y\to X$ induces the P-morphism $h\colon\Eis_{l}^{(m)}/\GG_{m}\to\Delta_{n}^{(m)}$.
From construction, the morphism
\[
\pr_{\Delta_{n}^{(m)}}^{-1}(X_{i})\cap\Gamma\to h^{-1}(X_{i})
\]
is a universal homeomorphism. From Lemma \ref{lem:closed-irreducible},
this is a very weak $\AA^{n(l+1)-2-m}$-bundle.
\end{proof}

\section{Motivic mass formulas}

Let $K_{0}(\Var/k)$ denote the Grothendieck ring of $k$-varieties
and let $\LL:=[\AA_{k}^{1}]$. 
\begin{defn}
\label{def:Grothendieck ring}We define $K_{0}^{\heartsuit}(\Var/k)$
to be the quotient of $K_{0}(\Var/k)$ by the following relation:
for each very weak $\AA^{m}$-bundle $Y\to X$, we have $[Y]=[X]\LL^{m}$.
We define $\cM_{k}^{\heartsuit}:=K_{0}^{\heartsuit}(\Var/k)$ to be
the localization of $K_{0}^{\heartsuit}(\Var/k)$ by $\LL$. For $s\in\ZZ$,
let $F_{s}\subset\cM_{k}^{\heartsuit}$ to be the subgroup generated
by elements of the form $[X]\LL^{r}$ with $\dim X+r\le-s$. We define
the \emph{dimensional completion} $\widehat{\cM}_{k}^{\heartsuit}$
of $\cM_{k}^{\heartsuit}$ to be the projective limit $\varprojlim\cM_{k}^{\heartsuit}/F_{s}$. 
\end{defn}

The ring structure on $K_{0}(\Var/k)$ induces ones on $K_{0}^{\heartsuit}(\Var/k)$,
$\cM_{k}^{\heartsuit}$ and $\widehat{\cM}_{k}^{\heartsuit}$. 
\begin{rem}
Let us suppose now that $k$ is a finitely generated field. We denote
by $\mathbf{WRep}_{G_{k}}(\QQ_{l})$ the abelian category of mixed
$l$-adic representations of the absolute Galois group $G_{k}=\Gal(k^{\sep}/k)$.
Its Grothendieck ring $K_{0}(\mathbf{WRep}_{G_{k}}(\QQ_{l}))$ admits
the completion $\widehat{K}_{0}(\mathbf{WRep}_{G_{k}}(\QQ_{l}))$
with respect to weights. We have a ring homomorphism $\cM_{k}\to K_{0}(\mathbf{WRep}_{G_{k}}(\QQ_{l}))$
sending $[X]$ to $\sum_{i}(-1)^{i}[H_{c}^{i}(\overline{X},\QQ_{l})]$,
which extends to $\widehat{\cM}_{k}^{\heartsuit}\to\widehat{K}_{0}(\mathbf{WRep}(\QQ_{l}))$
by an argument similar to one in \cite[Lemma 9.11]{yasuda2024motivic}
and its subsequent paragraphs. Hence, for each equality in $\widehat{\cM}_{k}^{\heartsuit}$
that we obtain below, we have the corresponding equality in $\widehat{K}_{0}(\mathbf{WRep}(\QQ_{l}))$,
provided that $k$ is finitely generated.
\end{rem}

\begin{lem}
\label{lem:motivic-class}We have 
\[
[\Delta_{n}^{(m)}]\LL^{n(l+1)-2-m}=[\Eis_{l}^{(m)}/\GG_{m}]\in K_{0}^{\heartsuit}(\Var/k).
\]
\end{lem}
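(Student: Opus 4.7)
The plan is to invoke Corollary \ref{cor:model} directly and then reduce to scissor-relation bookkeeping in $K_0^\heartsuit(\Var/k)$. Corollary \ref{cor:model} provides a scheme morphism $f\colon Y\to X$ which is a very weak $\AA^{n(l+1)-2-m}$-bundle and which induces the P-morphism $h\colon \Eis_l^{(m)}/\GG_m \to \Delta_n^{(m)}$. The defining relation of $K_0^\heartsuit(\Var/k)$ then gives the key identity
\[
[Y] = [X]\,\LL^{n(l+1)-2-m}.
\]
The remaining task is to identify $[X]$ with $[\Delta_n^{(m)}]$ and $[Y]$ with $[\Eis_l^{(m)}/\GG_m]$.

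The first identification is immediate from the construction in the proof of Corollary \ref{cor:model}: $X=\coprod_i X_i$, where the $X_i$ (equipped with reduced scheme structure) form a finite partition of $\Delta_n^{(m)}$ into locally closed subsets supplied by Lemma \ref{lem:technical}. Scissor relations in $K_0^\heartsuit(\Var/k)$ yield $[X] = \sum_i [X_i] = [\Delta_n^{(m)}]$.

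For the second identification, $Y = \coprod_i Y_i$ with $Y_i := \pr_{\Delta_n^{(m)}}^{-1}(X_i)\cap \Gamma$ given its reduced structure. By the proof of Corollary \ref{cor:model}, the projection $Y_i \to \Eis_l^{(m)}/\GG_m$ is a universal homeomorphism onto the constructible subset $h^{-1}(X_i)$. A universal homeomorphism of $k$-varieties is a very weak $\AA^0$-bundle in the sense of Definition \ref{def:bundle}, hence preserves classes in $K_0^\heartsuit(\Var/k)$. Refining each $h^{-1}(X_i)$ into finitely many locally closed strata, pulling back this refinement to $Y_i$, and applying the universal-homeomorphism invariance on each stratum, one obtains $[Y_i] = [h^{-1}(X_i)]$ via scissor relations. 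Since the sets $h^{-1}(X_i)$ partition $\Eis_l^{(m)}/\GG_m$, one more application of the scissor relations gives
\[
[\Eis_l^{(m)}/\GG_m] = \sum_i [h^{-1}(X_i)] = \sum_i [Y_i] = [Y].
\]
Combining the three identities proves the lemma.

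The substantive geometric work has already been carried out in Corollary \ref{cor:model} (together with Lemmas \ref{lem:closed-irreducible} and \ref{lem:technical}), so the only real obstacle here is formal: the preimages $h^{-1}(X_i)$ are a priori only constructible, not locally closed, so one must refine them into locally closed strata before invoking scissor relations and the universal-homeomorphism invariance. This refinement step is routine and does not depend on anything beyond the definition of $K_0^\heartsuit(\Var/k)$.
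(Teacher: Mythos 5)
Your proposal is correct and follows the same route as the paper: both invoke Corollary \ref{cor:model} to obtain a very weak $\AA^{n(l+1)-2-m}$-bundle $f\colon Y\to X$ and then read off $[\Eis_{l}^{(m)}/\GG_{m}]=[Y]=[X]\LL^{n(l+1)-2-m}=[\Delta_{n}^{(m)}]\LL^{n(l+1)-2-m}$. The paper states this chain of equalities without comment, whereas you spell out the scissor-relation and universal-homeomorphism bookkeeping behind the identifications $[X]=[\Delta_{n}^{(m)}]$ and $[Y]=[\Eis_{l}^{(m)}/\GG_{m}]$; that is a correct elaboration of the same argument.
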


\begin{proof}
Take a morphism $Y\to X$ as in Corollary \ref{cor:model}. We have
the following equalities in $K_{0}^{\heartsuit}(\Var/k)$:
\[
[\Eis_{l}^{(m)}/\GG_{m}]=[Y]=[X]\LL^{n(l+1)-2-m}=[\Delta_{n}^{(m)}]\LL^{n(l+1)-2-m}.
\]
\end{proof}
The map $\Eis_{l+1}/\GG_{m}\to\Eis_{l}/\GG_{m}$ is an $\AA^{n}$-bundle.
We define a motivic measure $\mu$ on $\Eis/\GG_{m}$ taking values
in $\widehat{\cM}_{k}^{\heartsuit}$ in the usual way: for a cylinder
$C\subset\Eis/\GG_{m}$, $\mu(C):=[\pi_{l}(C)]\LL^{-nl}$ for $l\gg0$.
We then extend this measure to measurable subsets (see \cite[Chapter 6, Section 3]{chambert-loir2018motivic}
for details). In particular, $\Eis/\GG_{m}$, $\Eis^{\sep}/\GG_{m}$
and $\Eis^{(m)}/\GG_{m}$ are measurable subsets. The set $(\Eis/\GG_{m}$ 

and we have 
\begin{align*}
\mu(\Eis/\GG_{m}) & =\mu(\Eis^{\sep}/\GG_{m})=\sum_{m}\mu(\Eis^{(m)}/\GG_{m})\\
 & =[\pi_{1}(\Eis)/\GG_{m}]\LL^{-n}=\LL^{-1}.
\end{align*}

\begin{thm}
\label{thm:motivic-Serre}We have
\begin{equation}
\int_{\Delta_{n}^{\circ}}\LL^{-\bd}=\LL^{-n+1}\in\widehat{\cM}_{k}^{\heartsuit}.\label{eq:Serre}
\end{equation}
\end{thm}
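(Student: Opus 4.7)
The plan is to relate the integral to the motivic volume of $\Eis/\GG_{m}$, which the excerpt has already computed to equal $\LL^{-1}$. Unpacking definitions, the integral is
\[
\int_{\Delta_{n}^{\circ}}\LL^{-\bd}=\sum_{m\ge0}[\Delta_{n}^{(m)}]\LL^{-m}\in\widehat{\cM}_{k}^{\heartsuit},
\]
so it suffices to rewrite each summand in terms of $\mu(\Eis^{(m)}/\GG_{m})$.

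The key identity comes from Lemma \ref{lem:motivic-class}, which for $l\ge2m$ gives
\[
[\Delta_{n}^{(m)}]=[\Eis_{l}^{(m)}/\GG_{m}]\,\LL^{m-n(l+1)+2}.
\]
Combined with the defining formula $\mu(\Eis^{(m)}/\GG_{m})=[\Eis_{l}^{(m)}/\GG_{m}]\,\LL^{-nl}$ for the motivic measure on $\Eis/\GG_{m}$, this yields the $l$-independent identity
\[
[\Delta_{n}^{(m)}]\,\LL^{-m}=\LL^{2-n}\,\mu(\Eis^{(m)}/\GG_{m})
\]
valid for every $m\ge0$.

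Summing over $m$ and invoking the identity
\[
\sum_{m\ge0}\mu(\Eis^{(m)}/\GG_{m})=\mu(\Eis^{\sep}/\GG_{m})=\mu(\Eis/\GG_{m})=\LL^{-1}
\]
already derived in the excerpt completes the argument:
\[
\int_{\Delta_{n}^{\circ}}\LL^{-\bd}=\LL^{2-n}\cdot\LL^{-1}=\LL^{1-n}.
\]

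The main conceptual work has been absorbed into Lemma \ref{lem:motivic-class}, which itself rests on the pseudo-$\AA^{m}$-bundle structure of $\Uf_{A,l}/(H\times\GG_{m,K})\to\psi_{l}^{-1}(A)/\GG_{m,K}$ (Corollary \ref{cor:A^m fib mod H}) together with the weak affine space description of $\Uf_{A,l}/(H\times\GG_{m,K})$ (Lemma \ref{lem:Unif/H*Gm}). Granting these, the theorem itself is a bookkeeping computation in $\widehat{\cM}_{k}^{\heartsuit}$, and I do not expect any genuine obstacle. The only subtlety to monitor is the $\sigma$-additivity step for the decomposition $\Eis^{\sep}=\bigsqcup_{m\ge0}\Eis^{(m)}$, which is standard motivic-measure technology since each $\Eis^{(m)}$ is a cylinder subset of $\J_{\infty}V$ (defined by finitely many conditions on the coefficients $y_{i,j}$, $j\le m$).
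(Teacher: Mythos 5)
Your proposal is correct and follows essentially the same route as the paper: both reduce via Lemma \ref{lem:motivic-class} to the identity $[\Delta_{n}^{(m)}]\LL^{-m}=\LL^{2-n}\mu(\Eis^{(m)}/\GG_{m})$ and then sum over $m$ using $\mu(\Eis/\GG_{m})=\LL^{-1}$. The bookkeeping and the appeal to $\sigma$-additivity over the cylinders $\Eis^{(m)}$ match the paper's argument exactly.
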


\begin{proof}
In $\widehat{\cM}_{k}^{\heartsuit}$, 
\begin{align*}
[\Delta_{n}^{(m)}]\LL^{-m} & =[\Eis_{l}^{(m)}/\GG_{m}]\LL^{-n(l+1)+2}\\
 & =\mu(\Eis^{(m)}/\GG_{m})\LL^{-n+2}.
\end{align*}
It follows that 
\begin{align*}
\int_{\Delta_{n}^{\circ}}\LL^{-d} & =\LL^{-n+2}\sum_{m}\mu(\Eis^{(m)}/\GG_{m})\\
 & =\LL^{-n+2}\mu(\Eis/\GG_{m})\\
 & =\LL^{-n+1}\in\widehat{\cM}_{k}^{\heartsuit}.
\end{align*}
\end{proof}
\begin{cor}
\label{cor:motivic-Bhargava}Let $P(n,l)$ denote the number of partitions
of $n$ into exactly $l$ parts;
\[
P(n,l):=\{(q_{1},\dots,q_{l})\in(\ZZ_{>0})^{l}\mid q_{1}\ge\cdots\ge q_{l},\,\sum_{i=1}^{l}q_{i}=n\}.
\]
Then, we have
\begin{equation}
\int_{\Delta_{n}}\LL^{-d}=\sum_{j=0}^{n-1}P(n,n-j)\LL^{-j}\in\widehat{\cM}_{k}^{\heartsuit}.\label{eq:Bhargava}
\end{equation}
\end{cor}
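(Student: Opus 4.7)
The plan is to deduce the corollary from Theorem~\ref{thm:motivic-Serre} by stratifying $\Delta_n$ according to the partition encoding the decomposition of an \'etale algebra into its connected components. Over any algebraically closed field $K$, the residue field of $K\tpars$ admits no proper algebraic extension, so every connected finite \'etale $K\tpars$-algebra is totally ramified, and every \'etale $K\tpars$-algebra of degree $n$ decomposes uniquely up to isomorphism as a product $L_1\times\cdots\times L_l$ of such extensions of degrees $n_1\geq\cdots\geq n_l$ summing to $n$. The associated partition $\lambda\vdash n$ is a locally constructible invariant on $\Delta_n$, yielding $\Delta_n=\bigsqcup_{\lambda\vdash n}\Delta_n^\lambda$. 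The universal property of the P-moduli space identifies $\Delta_n^\lambda$ with $\prod_{j\geq 1}\mathrm{Sym}^{m_j(\lambda)}(\Delta_j^\circ)$, where $m_j(\lambda)$ is the multiplicity of $j$ in $\lambda$. Combined with the additivity $\bd(L_1\times\cdots\times L_l)=\sum_i\bd(L_i)$, this reduces the integral to
\[
\int_{\Delta_n^\lambda}\LL^{-\bd}=\prod_{j\geq 1}\int_{\mathrm{Sym}^{m_j}(\Delta_j^\circ)}\LL^{-\bd_\Sigma},
\]
where $\bd_\Sigma$ denotes the sum of individual discriminants.

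The key identity to establish is the motivic symmetric-power formula
\[
\int_{\mathrm{Sym}^m(\Delta_j^\circ)}\LL^{-\bd_\Sigma}=\left(\int_{\Delta_j^\circ}\LL^{-\bd}\right)^m=\LL^{m(1-j)}\quad\text{in }\widehat{\cM}_k^\heartsuit.
\]
Granting this, each stratum contributes $\prod_j\LL^{m_j(1-j)}=\LL^{l(\lambda)-n}$. Summing over partitions of $n$ with exactly $l$ parts then yields $P(n,l)\LL^{l-n}$, and the reindexing $j=n-l$ transforms this into the desired $\sum_{j=0}^{n-1}P(n,n-j)\LL^{-j}$.

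The approach to the symmetric-power identity is to extend the Eisenstein-polynomial machinery of Sections 4--6 to the product setting: for a composition $(n_1,\ldots,n_l)$ of $n$ with multiplicity pattern $\lambda$, the product $\Eis^{(n_1,\ldots,n_l)}:=\prod_i\Eis_{n_i}$ carries the componentwise $\GG_m^l$-action together with the $S_\lambda$-action permuting equal-degree factors. Fiber closedness (Proposition~\ref{prop:closed}), the weak-$\AA^r$-bundle description of $\Uf_{A,l}/(H\times\GG_{m,K})$ from Lemma~\ref{lem:Unif/H*Gm}, and the Grothendieck-class identity of Lemma~\ref{lem:motivic-class} all extend factor by factor, reducing the integral to a motivic measure computation on the product quotient $\prod_i\Eis_{n_i}/(\GG_m^l\rtimes S_\lambda)$.

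The main obstacle is the $S_\lambda$-permutation quotient in $\widehat{\cM}_k^\heartsuit$, because $[\mathrm{Sym}^m X]\neq[X]^m$ in $K_0(\Var/k)$ in general. The expected resolution exploits the very-weak-$\AA^r$-bundle relation imposed in $\widehat{\cM}_k^\heartsuit$: after discriminant-stratification and the $\GG_m$-quotient on each factor, each piece becomes affine-like by Lemma~\ref{lem:motivic-class}, the $S_\lambda$-action is free on the open locus of tuples with pairwise distinct entries, and one must show that the remaining diagonal contributions collapse in the dimensional completion. Rigorously carrying out this collapse---so that symmetric and ordinary powers agree against the $\LL^{-\bd_\Sigma}$ weight---is the most delicate step of the argument.
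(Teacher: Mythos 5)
Your strategy coincides with the paper's in outline: stratify $\Delta_{n}$ by the partition recording the degrees of the connected components, use the additivity $\bd(\prod_{i}A_{i})=\sum_{i}\bd(A_{i})$, and invoke Theorem \ref{thm:motivic-Serre} on each connected factor. The genuine gap is the symmetric-power identity $\int_{\mathrm{Sym}^{m}(\Delta_{j}^{\circ})}\LL^{-\bd_{\Sigma}}=\bigl(\int_{\Delta_{j}^{\circ}}\LL^{-\bd}\bigr)^{m}$, which you correctly isolate as the key point and then do not prove; your final paragraph describes why it is delicate rather than resolving it. The identity is not formal: $[\mathrm{Sym}^{2}X]\ne[X]^{2}$ already for $X$ a two-point scheme, so Theorem \ref{thm:motivic-Serre} alone (which only controls the class $\sum_{m'}[\Delta_{j}^{(m')}]\LL^{-m'}$) cannot yield it. What makes it true here is the explicit structure of the strata: by Corollary \ref{cor:pi_l(Eis^m)} and Theorem \ref{thm:motivic-Krasner}, each nonempty $\Delta_{j}^{(m')}$ is, up to the relations of Definition \ref{def:Grothendieck ring}, of the form $\GG_{m}\times\AA^{s}$, and using $[\mathrm{Sym}^{r}(\GG_{m}\times\AA^{s})]=[\mathrm{Sym}^{r}\GG_{m}]\,\LL^{rs}$ together with $\sum_{r}[\mathrm{Sym}^{r}\GG_{m}]x^{r}=(1-x)/(1-\LL x)$ one can verify it by a generating-function computation (I checked the case $j=p=2$, $m=2$). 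Carrying this out requires knowing the strata as varieties up to a $\mathrm{Sym}$-compatible equivalence, not merely their classes, and also checking that the very-weak-bundle relations interact well with symmetric powers; your proposed detour through products of Eisenstein spaces and $\GG_{m}^{l}\rtimes S_{\lambda}$-quotients does not obviously supply either ingredient.

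For comparison, the paper takes the other fork: it uses the ordered coproduct $\coprod_{l}\coprod_{(q_{i})\in P(n,l)}\prod_{i}\Delta_{q_{i}}^{\circ}\to\Delta_{n}$ and asserts that this P-morphism is geometrically bijective, so symmetric powers never appear. Your instinct that the strata of $\Delta_{n}$ are really symmetric products is nonetheless correct: when $p>0$ and $n\ge 2p$, two non-isomorphic connected factors of the same degree can be transposed, so the ordered map fails injectivity on $K$-points and the paper's one-line justification is itself too quick at exactly the point you flag. You have therefore located the true difficulty of this corollary, but a proof must actually close it, e.g.\ by the generating-function argument sketched above.
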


\begin{proof}
The natural P-morphism 
\[
\coprod_{l=1}^{n}\coprod_{(q_{i})\in P(n,l)}\prod_{i=1}^{l}\Delta_{q_{i}}^{\circ}\to\Delta_{n},\,(A_{i})_{1\le i\le l}\mapsto\prod_{i=1}^{l}A_{i}
\]
is geometrically bijective. Moreover, we have
\[
\bd(\prod_{i}A_{i})=\sum_{i}\bd(A_{i}).
\]
Thus
\[
\int_{\prod_{i=1}^{l}\Delta_{q_{i}}^{\circ}}\LL^{-\bd}=\prod_{i=1}^{l}\int_{\Delta_{q_{i}}^{\circ}}\LL^{-\bd}=\prod_{i=1}^{l}\LL^{-q_{i}+1}=\LL^{-n+l}
\]
and 
\begin{align*}
\int_{\Delta_{n}}\LL^{-d} & =\sum_{l=1}^{n}\sum_{(q_{i})\in P(n,l)}\int_{\prod_{i=1}^{l}\Delta_{q_{i}}^{\circ}}\LL^{-\bd}\\
 & =\sum_{l=1}^{n}\sum_{(q_{i})\in P(n,l)}\LL^{-n+l}\\
 & =\sum_{l=1}^{n}P(n,l)\LL^{-n+l}\\
 & =\sum_{j=0}^{n-1}P(n,n-j)\LL^{-j}.
\end{align*}
\end{proof}
\begin{thm}
\label{thm:motivic-Krasner}In what follows, we follow the convention
that $0\nmid n$ for every positive integer $n$.
\begin{enumerate}
\item $\Delta_{n}^{(0)}$ is P-isomorphic to $\Spec k$.
\item We have $\Delta_{n}^{(m)}=\emptyset$ for $m>0$ satisfying either
of the following conditions:
\begin{enumerate}
\item $p\nmid n,\,m\ne n-1$,
\item $p\mid n,\,p\nmid(m-n+1),\,m-n+1<0$,
\item $p\mid n,\,p\mid(m-n+1)$.
\end{enumerate}
\item If $p\nmid n$ and $m=n-1$, then $\Delta_{n}^{(m)}$ is P-isomorphic
to $\Spec k$.
\item If $p\mid n$, $p\nmid(m-n+1)$ and $m-n+1\ge0$, then we have 
\[
[\Delta_{n}^{(m)}]=(\LL-1)\LL^{\lfloor(m-n+1)/p\rfloor}
\]
in $\cM_{k}^{\heartsuit}[(\LL-1)^{-1}]$ as well as in $\widehat{\cM}_{k}^{\heartsuit}$. 
\end{enumerate}
\end{thm}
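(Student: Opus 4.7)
My plan is to derive parts (1)--(4) from the Eisenstein polynomial machinery of Section~4, disposing of (1)--(3) by direct inspection of the vanishing or singleton structure of $\Eis^{(m)}$, and reserving the bulk of the work for the motivic calculation in part~(4). Parts (1) and (3) should reduce to one-point statements: for (1), the unique degree-one \'etale algebra is $K\tpars$ itself; for (3), the classical classification of tame totally ramified extensions of degree $n$ (all are Kummer extensions $K\tpars(t^{1/n})$, unique up to $K\tpars$-isomorphism over algebraically closed $K$) gives $\Delta_n^{(n-1)}$ a single geometric point. For part~(2), I would use Proposition~\ref{prop:Eis-m-explicit} to prove $\Eis^{(m)} = \emptyset$ in each subcase: for (2a) formula~(\ref{eq:disc}) forces $\bd = n-1$; for (2b) with $c := m-n+1 < 0$, the required equality $\ord y_{i_0} = \lceil(c+i_0)/n\rceil \le 0$ for the unique $i_0 \in \{1,\dots,n-1\}$ with $i_0 \equiv -c \pmod n$ contradicts the Eisenstein condition $\ord y_{i_0} \ge 1$; and for (2c) when $p \mid c$ no such $i_0$ with $p \nmid i_0$ exists at all. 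Since over algebraically closed residue fields every connected \'etale algebra is totally ramified and hence arises from an Eisenstein polynomial, this forces $\Delta_n^{(m)} = \emptyset$.

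For the main case~(4), I would start from Corollary~\ref{cor:pi_l(Eis^m)}, which for $l$ sufficiently large gives the explicit isomorphism
$$\Eis_l^{(m)} \cong \GG_m^2 \times \AA_k^{N}, \qquad N := nl - c + \lfloor c/p \rfloor - 1,$$
where the two $\GG_m$-factors correspond to $y_{n,1}$ (of $\GG_m$-weight $n$) and $y_{i_0,(c+i_0)/n}$ (of $\GG_m$-weight $i_0$). The strategy is to compute $[\Eis_l^{(m)}/\GG_m]$ in $K_0^{\heartsuit}(\Var/k)$, feed the result into Lemma~\ref{lem:motivic-class}, and verify that the exponent arithmetic $N - n(l+1) + 2 + m = \lfloor c/p \rfloor$ (using $c = m-n+1$) reproduces exactly the claimed formula, with the factor $\LL-1$ originating from the generic $\GG_m$-orbit.

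The crux, and the step I expect to be the main obstacle, is this quotient computation. Setting $d := \gcd(n, i_0)$ (coprime to $p$ because $p \nmid i_0$), the kernel of the $\GG_m$-action on the $\GG_m^2$ factor is exactly $\mu_d$, and the induced effective action of $\GG_m/\mu_d$ on $\GG_m^2$ has coprime weights $(n/d, i_0/d)$, so the geometric quotient $\GG_m^2/\GG_m$ is a copy of $\GG_m$. My plan is to use the $\GG_m$-equivariant projection $\Eis_l^{(m)} \to \GG_m^2$ to descend to a morphism
$$\pi\colon \Eis_l^{(m)}/\GG_m \longrightarrow \GG_m^2/\GG_m \cong \GG_m$$
whose geometric fibers are all identified with the tame cyclic quotient $\AA^N/\mu_d$. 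Since $p \nmid d$, the variety $\AA^N/\mu_d$ qualifies as a weak $\AA^N$ in the sense of Section~6, so $\pi$ is a very weak $\AA^N$-bundle and the defining relation of $K_0^{\heartsuit}(\Var/k)$ yields $[\Eis_l^{(m)}/\GG_m] = (\LL-1)\LL^N$. The delicate point is that the $\GG_m$-action on $\Eis_l^{(m)}$ itself is not free (stabilizers are contained in $\mu_d$, by Lemma~\ref{lem:tame-cyclic}), so one cannot naively divide $[\Eis_l^{(m)}]$ by $\LL-1$; rerouting the computation through the base $\GG_m^2/\GG_m$ and exploiting tameness of $\mu_d$ is what makes everything go through. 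Substitution into Lemma~\ref{lem:motivic-class} (taking $l \ge 2m$) followed by the elementary exponent check then produces the desired formula in $\cM_k^{\heartsuit}[(\LL-1)^{-1}]$, and since the right-hand side is $(\LL-1)$ times a nonnegative power of $\LL$ the same equality holds in $\widehat{\cM}_k^{\heartsuit}$ as well.
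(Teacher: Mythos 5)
Your proposal is correct, and for parts (1)--(3) it coincides with the paper's argument: (1) and (3) are the same one-point observations (trivial algebra, resp.\ uniqueness of the tame cyclic extension $K\llparenthesis t^{1/n}\rrparenthesis$), and (2) is the same deduction from (\ref{eq:disc}) and Proposition \ref{prop:Eis-m-explicit} that $m-n+1=n\ord y_{i_0}-i_0$ must be positive and prime to $p$. For part (4) you follow the paper's skeleton (Lemma \ref{lem:motivic-class} plus Corollary \ref{cor:pi_l(Eis^m)} and the same exponent bookkeeping), but you handle the crux --- extracting $[\Eis_l^{(m)}/\GG_m]$ --- by a genuinely different route. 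The paper simply declares $\Eis_l^{(m)}\to\Eis_l^{(m)}/\GG_m$ a Zariski-locally trivial $\GG_m$-torsor via Hilbert 90 and divides by $\LL-1$ in $\cM_k^{\heartsuit}[(\LL-1)^{-1}]$; you instead project equivariantly onto the $\GG_m^2$-factor, identify $\GG_m^2/\GG_m\cong\GG_m$, and show the induced map $\Eis_l^{(m)}/\GG_m\to\GG_m$ has all fibers of the form $\AA^N/\mu_d$ with $d=\gcd(n,i_0)$ tame, so that the very-weak-bundle relation defining $K_0^{\heartsuit}(\Var/k)$ gives $[\Eis_l^{(m)}/\GG_m]=(\LL-1)\LL^{N}$ directly. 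Your version is arguably the more robust one: as you note, the $\GG_m$-action is only guaranteed to have stabilizers inside $\mu_{\gcd(n,i_0)}$, which can be nontrivial (e.g.\ $n=6$, $p=2$, $i_0=3$), so the torsor claim in the paper is not free of charge, whereas your detour needs no freeness and stays inside $K_0^{\heartsuit}$ without inverting $\LL-1$ until the very end. The only points to make explicit when writing this up are that the projection descends to the quotients because it is $\GG_m$-equivariant, and that $\AA^N/\mu_d$ qualifies as a weak $\AA^N$ in the sense of Section 6 (it is the target of a pseudo-$\AA^N$-bundle over a point), so that your map is indeed a very weak $\AA^N$-bundle to which the defining relation of $K_0^{\heartsuit}(\Var/k)$ applies.
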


\begin{proof}
(1) For an algebraically closed field $K$, $\Delta_{n}^{(0)}(K)$
is a singleton corresponding to the trivial étale algebra $K\tpars^{n}/K\tpars$,
which shows the assertion.

(2) If $A_{y}/K\tpars$ corresponds to an Eisenstein polynomial $x^{n}+y_{1}x^{n-1}+\cdots+y_{n}$,
then from (\ref{eq:disc}), we have 
\[
\bd_{A_{y}}=n\min_{0\le i\le n}\ord\left(\sum_{i=0}^{n}(n-i)y_{i}\varpi^{n-i-1}\right).
\]
If $p\nmid n$, then the minimum on the right side is attained by
$\ord n\varpi^{n-1}=(n-1)/n$ and $\bd_{A_{y}}=n-1$, which shows
(a). (The case (a) follows also from the proof of (3).) Suppose $p\mid n$
and $A/K\tpars$ is a totally ramified extension of degree $n$ and
discriminant exponent $m$. From Proposition \ref{prop:Eis-m-explicit},
for some $i$ with $p\nmid i$ and $0\le i\le n-1$, we have
\[
m=n-i-1+n\ord y_{i}.
\]
It follows that 
\[
m-n+1=i-n\ord y_{i}>0
\]
and 
\[
p\nmid(m-n+1).
\]
This shows (b) and (c). 

(3) Let $K$ be an algebraically closed field and let $A/K\tpars$
be an extension of degree $n$. Let $\widetilde{A}$ be its Galois
closure with Galois group $G$. We may identify with a transitive
subgroup of $S_{n}$. Moreover, $G$ is isomorphic to the semidirect
product $H\rtimes C$ of a $p$-group $H$ and a tame cyclic subgroup
$C$ if $p>0$ and isomorphic to a cyclic group if $p=0$. We claim
that if $p>0$, then $H=1$. To show this by contradiction, suppose
that $H\ne1$. Since $p\nmid n$ and $H$ is a $p$-group, the $H$-action
on $\{1,\dots,n\}$ has at least one fixed point, say 1. Since $G$
is transitive, there exists $g\in G$ such that $g(1)$ is not fixed
by $H$. Then, $g^{-1}Hg\ne H$, which contradicts the fact that $H$
is a normal subgroup of $G$. We have proved the claim. Then, $G=C$
is the cyclic subgroup of $S_{n}$ generated by a cyclic permutation
of an $n$-cycle. In particular, the stabilizer $\Stab(1)$ of $1\in\{1,\dots,n\}$
is trivial and $A=\widetilde{A}^{G}=\widetilde{A}$. We conclude that
$A/K\tpars$ is a cyclic Galois extension. As is well-known, $A$
is isomorphic to $K\llparenthesis t^{1/n}\rrparenthesis$. Hence $\Delta_{n}^{(m-1)}(K)$
is a singleton, which shows the assertion.

(4) We put $c:=m-n+1$. From Lemma \ref{lem:motivic-class}, we have
\begin{align*}
[\Delta_{n}^{(m)}] & =[\Eis_{l}^{(m)}/\GG_{m}]\LL^{-nl+c+1}\in K_{0}^{\heartsuit}(\Var_{k}).
\end{align*}
Since the $\GG_{m}$-torsor $\Eis_{l}^{(m)}\to\Eis_{l}^{(m)}/\GG_{m}$
is Zariski locally trivial thanks to Hilbert's Theorem 90, we have
\[
[\Eis_{l}^{(m)}/\GG_{m}]=(\LL-1)^{-1}[\Eis_{l}^{(m)}]
\]
in $\cM_{k}^{\heartsuit}[(\LL-1)^{-1}]$. From Corollary \ref{cor:pi_l(Eis^m)},
\[
[\Eis_{l}^{(m)}]=(\LL-1)^{2}\LL^{nl-c+\lfloor c/p\rfloor-1}.
\]
Combining these equalities, we get
\begin{align*}
[\Delta_{n}^{(m)}] & =(\LL-1)^{-1}\left((\LL-1)^{2}\LL^{(nl-c+\lfloor c/p\rfloor-1)}\right)\LL^{-nl+c+1}\\
 & =(\LL-1)\LL^{\lfloor c/p\rfloor}
\end{align*}
in $\cM_{k}^{\heartsuit}[(\LL-1)^{-1}]$. Since $\LL-1$ is invertible
in $\widehat{\cM}_{k}^{\heartsuit}$, we have a natural homomorphism
$\cM_{k}^{\heartsuit}[(\LL-1)^{-1}]\to\widehat{\cM}_{k}^{\heartsuit}$.
Thus, the same equality holds also in $\widehat{\cM}_{k}^{\heartsuit}$.
\end{proof}
The following corollary is a direct consequence of the last theorem:
\begin{cor}
We have
\[
\dim\Delta_{n}^{(m)}=\begin{cases}
0 & (p\nmid n,\,m=n-1),\\
-\infty & (p\nmid n,\,m\ne n-1),\\
\lceil(m-n+1)/p\rceil & (p\mid n,\,p\nmid(m-n+1),\,m-n+1\ge0),\\
-\infty & (p\mid n,\,p\nmid(m-n+1),\,m-n+1<0),\\
-\infty & (p\mid n,\,p\mid(m-n+1)).
\end{cases}
\]
Here we follow the convention that $\dim\emptyset=-\infty$ and that
if $p=0$, then $p\nmid n$. Moreover, when $\Delta_{n}^{(m)}\ne\emptyset$,
then it has only one irreducible component of the maximal dimension. 
\end{cor}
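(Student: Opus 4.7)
The plan is to read off the dimension statement case by case from Theorem~\ref{thm:motivic-Krasner}, using that every very weak $\AA^{r}$-bundle has $r$-dimensional fibers, so the relations imposed in passing from $K_{0}(\Var/k)$ to $K_{0}^{\heartsuit}(\Var/k)$ preserve dimension. Parts~(2)(a), (b), (c) of that theorem directly yield $\Delta_{n}^{(m)}=\emptyset$, contributing the three $-\infty$ cases, while part~(3) gives $\Delta_{n}^{(m)}\cong\Spec k$ when $p\nmid n$ and $m=n-1$, so the dimension is $0$.

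For the main case $p\mid n$, $p\nmid c$, $c:=m-n+1\ge 0$, I would invoke Corollary~\ref{cor:model} to produce a scheme-level model $f\colon Y\to X$ of the P-morphism $\Eis_{l}^{(m)}/\GG_{m}\to\Delta_{n}^{(m)}$ that is a very weak $\AA^{n(l+1)-2-m}$-bundle. Here $X$ is a disjoint union of locally closed subschemes of $\Delta_{n}^{(m)}$ covering all its geometric points, so $\dim X=\dim\Delta_{n}^{(m)}$, and $Y$ has the same geometric points as $\Eis_{l}^{(m)}/\GG_{m}$. By Corollary~\ref{cor:pi_l(Eis^m)}, $\Eis_{l}^{(m)}\cong\GG_{m}^{2}\times\AA_{k}^{nl-c+\lfloor c/p\rfloor-1}$ has dimension $nl-c+\lfloor c/p\rfloor+1$, and since the $\GG_{m}$-action has finite stabilizers by Lemma~\ref{lem:tame-cyclic}, the quotient (and hence $Y$) has dimension $nl-c+\lfloor c/p\rfloor$. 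The very weak $\AA^{n(l+1)-2-m}$-bundle structure then yields $\dim X=\dim Y-(n(l+1)-2-m)$, which simplifies, using $c=m-n+1$, to $\lfloor c/p\rfloor+1$; this equals $\lceil c/p\rceil$ because $p\nmid c$.

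For the uniqueness of the top-dimensional irreducible component, I would use that $\Eis_{l}^{(m)}$ is irreducible by its explicit description, whence so are $\Eis_{l}^{(m)}/\GG_{m}$ and the reduction of $Y$. Its image $f(Y)$ in $X$ is then an irreducible constructible subset of $\Delta_{n}^{(m)}$, and by Corollary~\ref{cor:compact} the P-morphism is geometrically surjective, so $f(Y)$ meets every geometric point of $\Delta_{n}^{(m)}$. Consequently the closure of $f(Y)$ is a single irreducible component of $\Delta_{n}^{(m)}$ containing every point, giving the uniqueness claim (indeed, irreducibility of $\Delta_{n}^{(m)}$ as a topological space). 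The only somewhat delicate step is passing from the motivic-class identity of Theorem~\ref{thm:motivic-Krasner}(4), which lives in a Grothendieck ring and a priori does not control dimensions, to a statement about the scheme $\Delta_{n}^{(m)}$; this is precisely what the scheme-level model of Corollary~\ref{cor:model} is designed to supply, so I do not expect any serious obstacle.
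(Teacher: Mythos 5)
Your dimension computation is correct and is essentially the argument the paper leaves implicit (the paper offers no proof, calling the corollary a direct consequence of Theorem \ref{thm:motivic-Krasner}): the empty and zero-dimensional cases are read off from parts (1)--(3) of that theorem, and in the main case the scheme-level model $f\colon Y\to X$ of Corollary \ref{cor:model}, the explicit description $\Eis_{l}^{(m)}\cong\GG_{m}^{2}\times\AA_{k}^{nl-c+\lfloor c/p\rfloor-1}$, the finiteness of the $\GG_{m}$-stabilizers, and the fiber-dimension formula for a surjective finite-type morphism with fibers of constant dimension give $\dim\Delta_{n}^{(m)}=\lfloor c/p\rfloor+1=\lceil c/p\rceil$. (One edge case: part (2) of the theorem only treats $m>0$, so the emptiness for $m=0$, $n>1$ needs a separate one-line argument -- over an algebraically closed residue field, a connected \'etale algebra with $\bd=0$ is unramified, hence of degree $1$.)

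The gap is in your argument for uniqueness of the top-dimensional component. You claim that the reduction of $Y$ is irreducible and that its image in $X$ is therefore an irreducible constructible subset. But $Y=\coprod_{i}(\pr_{\Delta_{n}^{(m)}}^{-1}(X_{i})\cap\Gamma)$ is by construction a finite disjoint union of locally closed pieces, so it is not irreducible once the decomposition of Lemma \ref{lem:technical} has more than one stratum; moreover a P-morphism only provides a correspondence on points through geometrically bijective finite-type morphisms, not a continuous map of Zariski topological spaces, so ``the image of an irreducible set is irreducible'' is not available, and your parenthetical claim that $\Delta_{n}^{(m)}$ is irreducible as a topological space is both unproved and stronger than what the corollary asserts. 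The repair is to count points of maximal transcendence degree instead: for a finite-type $k$-scheme $Z$ of dimension $d$, the $d$-dimensional irreducible components correspond to the points $z$ with $\operatorname{trdeg}_{k}\kappa(z)=d$, and this count is invariant under geometrically bijective finite-type morphisms, hence agrees for $\Eis_{l}^{(m)}/\GG_{m}$ and $Y$, and for $X$ and $\Delta_{n}^{(m)}$. Since $\Eis_{l}^{(m)}/\GG_{m}$ is irreducible, $Y$ has exactly one point of top transcendence degree $D=\dim Y$; additivity of transcendence degree along $f$ sends every such point to a point of $X$ of transcendence degree $D-(n(l+1)-2-m)=\dim X$, and every top point of $X$ lifts to one of $Y$ because its fiber is nonempty of dimension $n(l+1)-2-m$. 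Hence $X$, and therefore $\Delta_{n}^{(m)}$, has exactly one point of maximal transcendence degree, which is the stated uniqueness.
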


\begin{rem}
The original mass formulas in \cite{krasner1962nombredes,krasner1966nombredes,serre1978unetextquotedblleftformule,bhargava2007massformulae}
also hold for local fields of characteristic zero. It would be possible
to similarly prove motivic mass formulas in characteristic zero, once
the relevant P-moduli space will be constructed.
\end{rem}

\bibliographystyle{alpha}
\bibliography{MotivicMass}

\end{document}